\newtheorem{theorem}{Theorem}[section]
\newtheorem{lemma}[theorem]{Lemma}
\newtheorem{cor}[theorem]{Corollary}
\newtheorem{prop}[theorem]{Proposition}
\theoremstyle{definition}
\newtheorem{definition}[theorem]{Definition}
\theoremstyle{remark}
\numberwithin{equation}{section}
\newtheorem*{thm*}{Theorem}
\newtheorem*{thmA}{Theorem A}
\newtheorem*{thmB}{Theorem B}
\newcommand{\ca}{\mathrm{CA}}
\newcommand{\C}{\mathbb{C}}
\newcommand{\disk}{\mathbb{D}}
\newcommand{\R}{\mathbb{R}}
\newcommand{\Z}{{\mathbb{Z}}}
\newcommand{\ol}{\overline}
\newcommand{\sm}{\setminus}
\newcommand{\qml}{\mathrm{QML}}
\newcommand{\lam}{\mathcal{L}}
\newcommand{\hlam}{\widehat{\mathcal{L}}}
\newcommand{\ch}{\mathrm{CH}}
\newcommand{\si}{\sigma}
\newcommand{\0}{\varnothing}
\newcommand{\uc}{\mathbb{S}}
\newcommand{\M}{\mathcal{M}}
\renewcommand{\le}{\leqslant}
\renewcommand{\ge}{\geqslant}
\def\ofl{\mathrm{OL}}
\begin{document}

\title[Dynamical generation of parameter laminations]
{Dynamical generation of parameter laminations}

\author{Alexander~Blokh}

\address[Alexander~Blokh]
{Department of Mathematics\\ University of Alabama at Birmingham\\
Birmingham, AL 35294}

\email{ablokh@math.uab.edu}

\author{Lex Oversteegen}

\address[Lex Oversteegen]
{Department of Mathematics\\ University of Alabama at Birmingham\\
Birmingham, AL 35294}

\email{overstee@uab.edu}

\author{Vladlen~Timorin}

\address[V.~Timorin]{Faculty of Mathematics, HSE University\\
6 Usacheva St., 119048 Moscow, Russia}

\email{vtimorin@hse.ru}


\subjclass[2010]{Primary 37F20; Secondary 37F10, 37F50}

\date{February 23, 2019; in the revised form May 3 2019}

\keywords{Complex dynamics; laminations; Mandelbrot set; Julia set}

\dedicatory{Dedicated to the memory of Sergiy Kolyada}

\begin{abstract}
Local similarity between the Mandelbrot set and quadratic Julia sets
manifests itself in a variety of ways. We discuss a combinatorial one,
in the language of geodesic laminations. More precisely, we compare
quadratic invariant laminations representing Julia sets with the
so-called Quadratic Minor Lamination (QML) representing a locally
connected model of the Mandelbrot set. Similarly to the construction of
an invariant lamination by pullbacks of certain leaves, we describe
how QML can be generated by properly understood pullbacks of certain
minors. In particular, we show that the minors of all
non-renormalizable quadratic laminations
can be obtained by taking limits of ``pullbacks'' of minors from the main cardioid.
\end{abstract}

\maketitle

\section*{Introduction}\label{s:intro}
Quadratic polynomials $P_c(z)=z^2+c$, where $c\in\C$, play an important role in complex dynamics.
They provide a simple but highly non-trivial example of polynomial dynamical systems
(note that every quadratic polynomial is affinely conjugate to one of the form $P_c$), and
this family is universal in the sense that many properties of the $c$-parameter plane
  reappear locally in almost any analytic family of holomorphic maps \cite{Mc00}.
The central object in the $c$-plane is the \emph{Mandelbrot set} $\M_2$.
By definition, $c\in\M_2$ if the Julia set $J(P_c)$ of $P_c$ is connected, equivalently,
  if the sequence of iterates $P^n_c(c)$ does not escape to infinity (see \cite{hubbdoua85}).

The Mandelbrot set is compact and connected.
It is not known if it is locally connected, but
there is a nice model $\M^c_2$, due to Douady, Hubbard and Thurston, of
$\M_2$ (i.e., there exists a continuous map $\pi:\M_2\to \M^c_2$
such that point inverses are connected); moreover, if $\M_2$ is locally connected,
$\pi$ is a homeomorphism.
Namely, set $\disk=\{z\in\C: |z|<1\}$ and $\uc=\{z\in\C\,|\,|z|=1\}$;
call $\disk$ the \emph{unit disk} and $\uc$ the \emph{unit circle}.
There are pairwise disjoint chords (including degenerate chords, i.e.
singletons in $\uc$) or polygons inscribed in
  $\ol\disk=\{z\in\C: |z|\le 1\}$ such that,
  after collapsing all these chords and polygons to points, we get a quotient space $\M^c_2$.
We will write $\qml$ for the set consisting of all these chords and edges of all these polygons.
This set is called the \emph{quadratic minor lamination}.

More generally, a (geodesic) \emph{lamination} is a set of chords (called \emph{leaves}) in $\ol\disk$
  that contains all points of $\uc$ such that the limit of any converging sequence of leaves is a leaf.
The lamination $\qml$ can be described explicitly.
For example, one can algorithmically generate countably many leaves dense in $\qml$,
and there are several known constructions, e.g. \cite{lav86, la89}
(other combinatorial viewpoints on $\M_2^c$ and $\qml$ can be found in
\cite{bopt16, kel00, pr08, sch09}).
In this paper, a new construction is provided that is based on taking preimages under the angle doubling map.
Each of the sets $\M_2$ and $\M^c_2$ contains countable and dense family of homeomorphic copies of itself.
Thus, $\M_2$ and $\M^c_2$ are examples of so-called \emph{fractal} sets.

\begin{figure}[!htb]
    \centering
        \includegraphics[height=0.15\textheight]{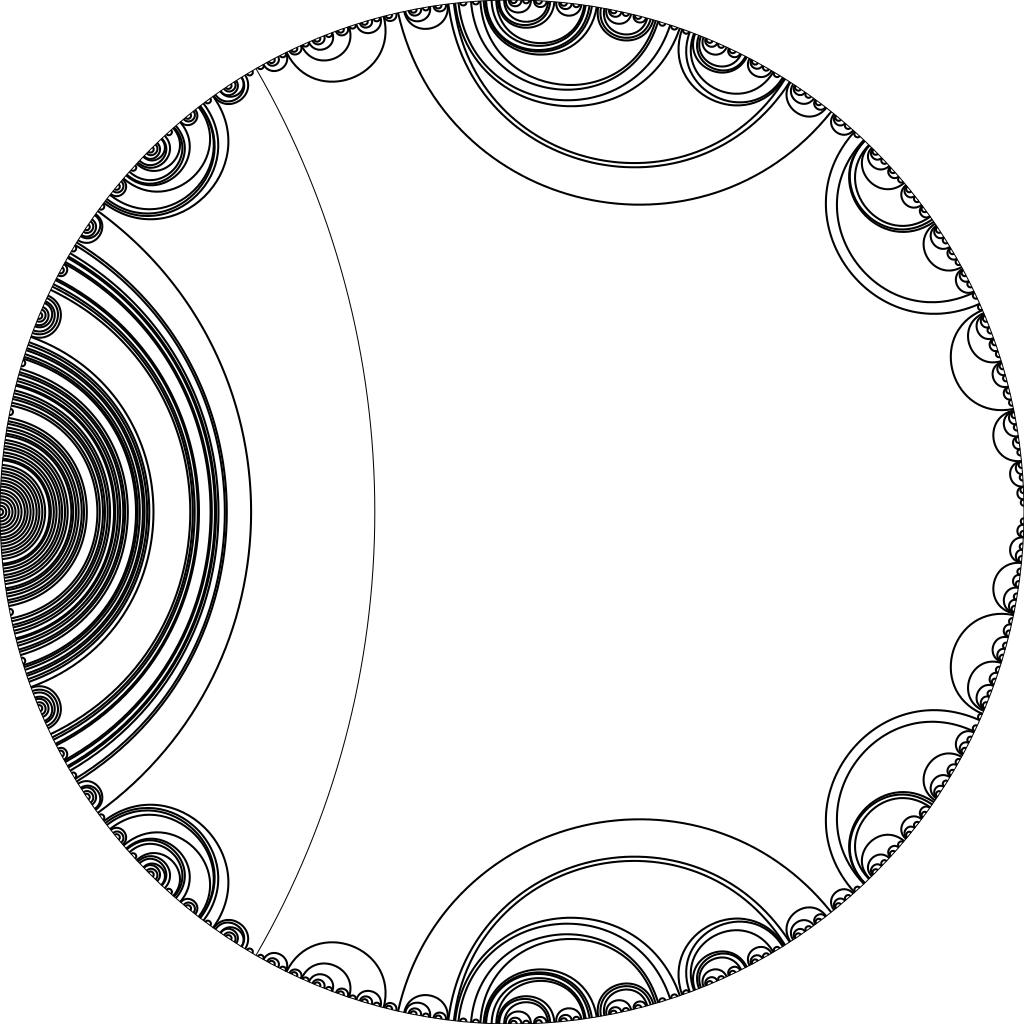}
        \caption{The geolamination $\qml$}
        \label{fig:qml}
\end{figure}

A description of $\qml$ by Thurston \cite{thu85} refers to laminational models of Julia sets.
By the \emph{filled Julia set} $K(P_c)$ of a polynomial $P_c$ we mean the set of points $z\in\C$ with $P_c^n(z)\not\to\infty$.
The \emph{Julia set} $J(P_c)$ is the boundary of $K(P_c)$.
If $K(P_c)$ is locally connected, then
  it can be also obtained from $\ol\disk$ by collapsing leaves and finite polygons of some lamination $\lam(P_c)$.

Indeed, if $K(P_c)$ is locally connected, the Riemann map defined
for the complement of $K(P_c)$ can be extended onto $\uc$
which gives rise to a continuous map $\psi: \uc\to J(P_c)$ that
semiconjugates the angle doubling map $\si_2:\uc\to \uc$ (taking
$z\in\uc$ to $z^2$) and $P_c|_{J(P_c)}$. Considering convex hulls of
\emph{fibers} (point-inverses) of $\psi$ and collecting boundary edges
of these convex hulls, we obtain the lamination $\lam(P_c)$.
Declaring points $x, y$ of $\uc$ \emph{equivalent} if and only if
$\psi(x)=\psi(y)$ we arrive at the \emph{invariant laminational
equivalence $\sim_c$} and the associated quotient space
$J_{\sim_c}$ of $\uc$ (the \emph{topological Julia set}), homeomorphic
to $J(P_c)$. Equivalence classes of $\sim_c$ have pairwise disjoint
convex hulls. The \emph{topological polynomial}
$f_{\sim_c}:J_{\sim_c}\to J_{\sim_c}$, induced by $\si_2$, is
topologically conjugate to $P_c|_{J(P_c)}$. Laminational equivalence
relations $\sim$ similar to $\sim_c$ can be introduced with no references to polynomials by
listing their properties similar to those of $\sim_c$
(this can be done for degrees higher than $2$ as well). In that
case one also considers the 
collection $\lam_\sim$ of the edges of convex
hulls of all $\sim$-classes and all singletons in $\uc$ called the \emph{q-lamination
(generated by $\sim$)}.

A lamination $\lam_{\sim_c}$ thus obtained satisfies certain dynamical properties
(in our presentation we rely upon \cite{bmov13}). Below we think of $\si_2$ applied
to a chord $\ell$ with endpoints $a$ and $b$ so that it
maps to the chord whose endpoints are $\si_2(a)$ and $\si_2(b)$;
we can think of this as an extension of $\si_2$ over $\ell$ and make it linear on $\ell$.
The properties are as follows:
\begin{enumerate}
  \item \textbf{forward invariance:} for every $\ell\in\lam$, we have $\si_2(\ell)\in\lam$;
  \item \textbf{backward invariance:} for every $\ell\in\lam$ we have $\ell=\si_2(\ell_1)$ for some $\ell_1\in\lam$;
  \item \textbf{sibling property:} for every $\ell\in\lam$, we have $-\ell\in\lam$.
\end{enumerate}
Here $-\ell$ is the image of $\ell$ under the
map $z\mapsto -z$ of $\uc$.
(Under this map all angles are incremented by $\frac 12$ modulo $1$).
The leaf $-\ell$ is called the \emph{sibling} of $\ell$.
A chord which is a diameter of $\disk$ is said to be \emph{critical}.
Laminations with properties $(1)$--$(3)$ are called \emph{quadratic invariant laminations}.
By \cite{bmov13} all quadratic q-laminations $\lam_\sim$ are invariant, however the converse is not true and
there are quadratic invariant laminations that are not q-laminations. Below we often call
quadratic invariant laminations simply \emph{quadratic laminations}.

Properties (1) -- (3) from above deal exclusively with leaves. To
understand the dynamics one also considers components of the complement
in $\ol\disk$ to the union of all leaves of $\lam$. More precisely, a
\emph{gap} of $\lam$ is the closure of a component of
$\disk\sm\bigcup_{\ell \in\lam}\ell$. Gaps $G$ are said to be
\emph{finite} or \emph{infinite} according to whether $G\cap\uc$ is a
finite or infinite set. By \cite{bmov13} if $G$ is a gap of a quadratic
lamination $\lam$, then either all its edges map  to one leaf of
$\lam$, or all its edges map to a single point in $\uc$, or the convex hull of the set $\si_2(G\cap
\uc)$ is a gap of $\lam$ which one can view as the \emph{image} of $G$. Moreover,
the map on the boundary of $G$ satisfies \textbf{gap invariance}:
either there exists a critical edge of $G$, or the map
$\tau=\si_2|_{G\cap \uc}$ extends to $\uc$ as an orientation preserving
covering map $\hat \tau$ such that $G\cap \uc$ is the full preimage of
$\tau(G\cap \uc)$ under $\hat \tau$. Gap invariance was part of the
original definition of a (geodesic) lamination given by Thurston in
\cite{thu85}. It allows us to extend the map $\si_2$ onto the entire
$\ol\disk$ if a quadratic lamination $\lam$ is given. Indeed, we have
already described how $\si_2$ acts on leaves; it can then be extended
over gaps using the \emph{barycentric} construction (see \cite{thu85}
for details). 

Due to the backward invariance property, quadratic laminations can often be generated by taking pullbacks of leaves.
By a \emph{pullback} of a leaf $\ell\in\lam$, we mean a leaf $\ell_1\in\lam$ such that $\si_2(\ell_1)=\ell$.
An \emph{iterated pullback} of $\ell$ of level $n$ is defined as a leaf $\ell_n\in\lam$ with $\si_2^n(\ell_n)=\ell$.
The concept of (iterated) pullback is widely used in the study of (quadratic) invariant laminations. In this paper we show
that it can also be used as one studies \emph{parameter} laminations, i.e., laminations which do not satisfy conditions
(1) --- (3), such as $\qml$.
Let us now discuss $\qml$ in more detail.

To measure arc lengths on $\uc$, we use the normalized Lebesgue measure
(the total length of $\uc$ is $1$). The
\emph{length of a chord} is by definition the length of the shorter
circle arc connecting its endpoints. Following Thurston, define a
\emph{major leaf} (a \emph{major}) of a quadratic lamination as a
longest leaf of it. (There may be one longest leaf that is critical or
two longest leaves that are siblings.)
The \emph{minor leaf} (the \emph{minor}) of a lamination is the $\si_2$-image of a major.
If a minor $m$ is non-periodic, then there exists a unique maximal lamination with minor $m$ denoted by $\lam(m)$.
If a minor $m$ is periodic and non-degenerate, then we define $\lam(m)$ as the unique \emph{q-lamination} with minor $m$.
Finally, if $m$ is a periodic singleton, then we explicitly define $\lam(m)$ later in the paper
 so that $m$ is the minor of $\lam(m)$ (note, that in this case the choice of $\lam(m)$ is irrelevant for our purposes).
Call $\lam(m)$ the \emph{minor leaf lamination associated with $m$}. Observe that
there are no minors that are non-degenerate and have exactly one
periodic endpoint.

A chord in $\ol\disk$ with endpoints $a$ and $b$ is denoted by $ab$. If two distinct chords intersect
in $\disk$, we say that they \emph{cross} or that they are \emph{linked}.
Given a chord $ab$, without a lamination, we have ambiguity in defining pullbacks of $ab$.
Namely, there are two preimages of $a$ and two preimages of $b$, and, in general,
 there are several ways of connecting the preimages of $a$ with the preimages of $b$.
Even if we prohibit crossings and impose the sibling property, then there are three ways
(two ways of connecting the preimages by two chords and one way of connecting them by four chords).
However, if we know that the pullbacks must belong to $\lam(m)$, then they are well defined.
We can describe the process of taking pullbacks explicitly, without referring to $\lam(m)$.
One of the main objectives of this paper is to apply a similar pullback construction to $\qml$.

Thurston's definition of $\qml$ is simply the following: $\qml$ consists precisely of the minors of all quadratic laminations.
In particular, it is true (although not at all obvious) that different minors do not cross.

\subsection*{Offsprings of a minor}
In order to state the first main result, we introduce some terminology and notation.
The convex hull of a subset $A\subset\R^2=\C$ will be denoted by $\ch(A)$.
Let $\ell$ and $\ell_1$ be chords of $\uc$, possibly degenerate, not passing through the center of the disk.
We will write $H(\ell)$ for the smaller open circle arc bounded by the endpoints of $\ell$.
Set $D(\ell)=\ch(H(\ell))$; since $H(\ell)$ is an open arc, $D(\ell)$ does not include $\ell$.
If $\ell_1\in D(\ell)$, then we write $\ell_1<\ell$.
The notation $\ell_1\le\ell$ will mean $\ell_1\in\ol{D(\ell)}$.
Note that, if $\ell_1$ shares just one endpoint with $\ell$ and $\ell_1\le\ell$, then it is not true that $\ell_1<\ell$.
It follows that if $\ell_1\le \ell, \ell_1\ne \ell$ then $|\ell_1|<|\ell|$, where $|\ell|$ denotes the length of $\ell$;
in particular $\ell_1<\ell$ implies $|\ell_1|<|\ell|$.
If $\ell_1<\ell$ (resp., $\ell_1\le \ell$), then we say that $\ell_1$ lies \emph{strictly behind} (resp., \emph{behind}) $\ell$.
Observe that our terminology applies to degenerate chords (i.e., singletons in the unit circle) too;
a degenerate chord $\ell_1=\{b\}$ is strictly behind $\ell$ if and only if $b\in H(\ell)$, and $\ell_1\le \ell$
simply means that $b\in \ol{H(\ell)}$.

Let us now describe an inductive process that shows how \emph{dynamical}
pullbacks of minors of quadratic laminations lead to the construction of the \emph{parametric}
lamination $\qml$. Namely, consider any non-degenerate minor $m\in\qml$.
Suppose that a point $a\in \uc$ lies behind $m$ 
and $\si_2^n(a)$ is an endpoint of $m$ for some minimal $n>0$. Observe
that then $a$ is not periodic as no image of a 
minor is
located behind this minor. Consider \emph{all} numbers $k$ such that
$\si_2^k(a)$ is an endpoint of a minor $m'_k$ with $a<m'_k\le m$ (thus,
$a$ is separated from $m$ by $m'_k$ or $m'_k=m$), and the least such
number $l$. Denote by $m_a$ the pullback of $m'_l$ in $\lam(m'_l)$
containing $a$ such that $\si_2^{l-1}(m_a)$ is a major of $\lam(m'_l)$
and call it an \emph{offspring} of $m$. We also say that
$m_a$ is a \emph{child} of $m'_l$. Observe that periodic minors are
nobody's offsprings. Indeed, if $m'\le m'', m'\ne m''$ are minors,
$\si_2^i(m')=m''$, and $m'$ is periodic, then $\si_2^j(m'')=m'\le m''$
for some $j$, and it is well-known that this is impossible for minors.

\begin{thmA}
Let $m\in\qml$ be a non-degenerate minor. Then
offsprings of a minor $m\in\qml$ are minors too (i.e., they are leaves of $\qml$).
Thus, if a point $a$ lies behind $m$ and is eventually mapped to an endpoint of $m$ under $\si_2$ then
there is a minor $m_a\ni a$ that is eventually mapped to $m$ under $\si_2$.
\end{thmA}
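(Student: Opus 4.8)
The goal is to show that the offspring $m_a$ of a non-degenerate minor $m\in\qml$ is itself a leaf of $\qml$, i.e., that it is the minor of some quadratic lamination. The natural strategy is to exhibit an explicit quadratic lamination having $m_a$ as its minor, and the obvious candidate is built from $\lam(m'_l)$, the minor leaf lamination associated with the minor $m'_l$ that is used to define $m_a$. Recall that by construction $M := \si_2^{\,l-1}(m_a)$ is a major of $\lam(m'_l)$, so $\si_2(M) = m'_l$; and $m_a$ is the specific pullback of $m'_l$ inside $\lam(m'_l)$ lying behind (indeed containing) $a$, chosen so that $\si_2^{\,l-1}(m_a) = M$. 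So the first step is to set up this apparatus carefully: fix $\lam(m'_l)$, identify the major $M$ and the critical leaf/gap structure, and record that $\si_2^{\,l-1}$ maps the arc configuration around $m_a$ homeomorphically onto the configuration around $M$ (this is where minimality of $l$ and the fact that $a$ is not periodic get used — nothing along the orbit $a, \si_2(a),\dots,\si_2^{\,l-1}(a)$ hits a minor $\le m$ until step $l$, so the pullback branch is unobstructed).

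The second, and main, step is to produce a quadratic lamination whose major is $m_a$ (equivalently whose minor is $\si_2(m_a)$ — but we want $m_a$ itself to be the minor, so more precisely we want a quadratic lamination $\lam^\dagger$ whose \emph{major} maps onto $m_a$, making $m_a$ the minor of $\lam^\dagger$). The idea: take $\lam(m'_l)$ and "pull it back" $l-1$ times along the branch selected by $a$. Formally, one wants to define a laminational equivalence (or at least an invariant geodesic lamination in Thurston's sense) by declaring the leaves of $\lam(m'_l)$, together with all iterated $\si_2$-preimages that are consistent with the chosen pullback branch and the sibling property, to be leaves, and then taking the closure. One must check the three defining properties of a quadratic invariant lamination: forward invariance is automatic since we started from an invariant lamination and only added preimages; the sibling property is maintained by including siblings at every pullback step; backward invariance holds because we are taking a full pullback construction; and one must check that the resulting family is closed (limits of leaves are leaves) and that no two leaves cross — the no-crossing is guaranteed along the pullback branch because $\si_2^{\,l-1}$ restricted to the relevant region is injective and $\lam(m'_l)$ itself has no crossings. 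Then $m_a$, being the longest leaf of this new lamination (it is a pullback of the major $M$ of $\lam(m'_l)$ under an injective branch, hence has length close to $|M|/2^{\,l-1}\cdot(\text{correction})$ — wait, one must actually verify $m_a$ is a \emph{longest} leaf; this is the delicate length estimate, using that $a$ lies behind $m$ and the pullback branch lies in a region where lengths are controlled).

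The third step is to conclude: once $m_a$ is exhibited as the major (or as part of the critical configuration) of a legitimate quadratic invariant lamination $\lam^\dagger$, its $\si_2$-image is the minor of $\lam^\dagger$, hence lies in $\qml$ by Thurston's description of $\qml$; but we actually want $m_a\in\qml$ directly, so instead we should arrange that $m_a$ is the \emph{minor} of $\lam^\dagger$ — i.e., the new lamination should be the one obtained by pulling back $l$ times rather than $l-1$, so that the major of $\lam^\dagger$ is the selected pullback of $M$ and $\si_2$ of it is exactly $m_a$. Then $m_a\in\qml$ is immediate, and the final sentence of the theorem follows since $\si_2^{\,l}(m_a) = \si_2(m'_l)\cdot(\dots)$, more precisely $\si_2^{\,l-1}(m_a)=M$ and $\si_2^{\,l}(m_a)=m'_l$; one then checks that iterating further carries $m_a$ to $m$ (using $m'_l\le m$ and that the definition of offspring picks up precisely the point $a$ whose orbit reaches an endpoint of $m$). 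The main obstacle I anticipate is the length/longest-leaf verification: ensuring that the pulled-back leaf $m_a$ is genuinely the longest leaf of $\lam^\dagger$ (so that it qualifies as a major and its image as the minor), which requires controlling how lengths behave under the inverse branch of $\si_2^{\,l-1}$ determined by the condition "$a$ lies behind $m$ and is not separated from $m$ by any minor until step $l$" — this is exactly the role played by the minimality of $l$ in the definition of the offspring, and making that quantitative is the crux.
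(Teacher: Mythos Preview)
Your proposal has a genuine structural gap. The construction ``pull back $\lam(m'_l)$ along the branch selected by $a$'' is not well-defined: $\lam(m'_l)$ is already a quadratic invariant lamination, hence already backward invariant, so taking further preimages adds nothing. The leaf $m_a$ you want is \emph{already} a leaf of $\lam(m'_l)$; the question is whether it is the minor of some (other) lamination. What you would really need is to enlarge $\lam(m'_l)$ by inserting the collapsing quadrilateral $Q(m_a)$ and all of its iterated pullbacks, and then check that the result is still a lamination (no crossings) with $m_a$ as longest leaf. But verifying ``no crossings'' for the vertical edges of $Q(m_a)$ against leaves of $\lam(m'_l)$, and verifying that $m_a$ is longest, are precisely the substantive obstacles, and your plan offers no mechanism for either beyond noting that the length estimate is ``the crux.''

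The paper avoids building any new lamination. Instead it uses Thurston's intrinsic characterization: a chord is a minor iff it is a \emph{stand alone minor}, meaning it satisfies properties SA1--SA3 (minor-like; forward images unlinked and avoiding the central strip edges; no length drop). The key analytic input is the Central Strip Lemma, which converts ``first length drop'' into ``previous image is vertical in the central strip,'' and Lemma~\ref{l:nodrop}, which shows that leaves of $\lam(m)$ behind $m$ automatically satisfy SA3. With this, Proposition~\ref{p:der-min} shows that any \emph{derived minor} (a leaf $m_1\le m$ in $\lam(m)$ whose forward images never separate $m_1$ from $m$ and never hit a horizontal edge of $Q(m)$) is a stand alone minor. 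The proof of Theorem~A then proceeds inductively via Lemma~\ref{l:find-min}: starting from $a$ and $m$, find the forward image of the pullback leaf closest to $m$; this is a derived minor, hence a genuine minor $\ell'$; if $a\notin\ell'$, replace $m$ by $\ell'$ and repeat inside $\lam(\ell')$. Finitely many steps produce a minor through $a$ mapping to $m$. Your proposal misses both the stand alone minor criterion (which eliminates the need to construct $\lam^\dagger$) and the inductive descent through intermediate children (which is what actually handles the separation/length control you flagged as the main obstacle).
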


The first claim of Theorem A easily implies the second one.

\subsection*{Renormalization and baby $\qml$s}

The \emph{empty} lamination is the lamination all of whose leaves are degenerate (i.e., are singletons in
$\uc$).

Consider two quadratic laminations $\lam_1$ and $\lam_2$. If
$\lam_2\subset\lam_1$, then we say that $\lam_1$ \emph{tunes} $\lam_2$;
in particular this means that any lamination trivially tunes itself.
If $\lam_2\subsetneqq\lam_1$, then $\lam_1$ is obtained out of $\lam_2$ by
inserting some chords (which become leaves of $\lam_1$) in gaps of $\lam_2$.
If in this setting $\lam_2=\lam(m_2)$ for a non-degenerate periodic minor $m_2$
(we do not exclude the possibility $\lam_2=\lam_1$), then $\lam_1$ is called \emph{renormalizable}.
A lamination $\lam_1$ is \emph{almost non-renormalizable} if there exists no non-empty lamination
$\lam(m_2)\subsetneqq \lam_1$.
We call $\lam_1$ \textbf{almost} non-renormalizable because if it is as above while also $\lam_1=\lam(m_1)$
with non-degenerate periodic minor $m_1$ then, as we saw above, $\lam_1$ is renormalizable, but only in a trivial
way. Observe that in \cite{bot17} almost non-renormalizable laminations are called \emph{oldest ancestors}.

Let $m$ be a non-degenerate periodic minor.
We will write $C(m)$ for the \emph{central set} of $\lam(m)$, i.e., the gap/leaf of $\lam(m)$ containing the center of $\disk$
and, therefore, located between the two majors of $\lam(m)$. Equivalently, $C(m)$ can be called
the \emph{critical} set of $\lam(m)$. 
Then $\si_2(C(m))$ is the convex hull of $\si_2(\uc\cap C(m))$.
This is also a gap or a leaf of $\lam(m)$ having $m$ as a boundary leaf (edge).
We will see that, if $\lam(m_1)$ is renormalizable, then $m_1$ is contained in $\si_2(C(m))$ for some $m$ as above.
Moreover, we can choose $m$ so that $\lam(m)$ is almost non-renormalizable.

All \emph{edges} (i.e., boundary chords) of $\si_2(C(m))$ are leaves of $\qml$.
However, there are also leaves of $\qml$ in $\si_2(C(m))$ that enter the interior of $\si_2(C(m))$.
All these leaves are precisely the minors of all laminations strictly containing $\lam(m)$.
It follows that all renormalizable laminations are represented by minors in gaps of the form $\si_2(C(m))$,
  where $m$ is periodic and such that $\lam(m)$ is almost non-renormalizable.
In other words, 
all minors 
of almost non-renormalizable laminations and all points in $\uc$ form a lamination $\qml^{nr}$
  (``nr'' from \textbf{n}on-\textbf{r}enormalizable)
  whose infinite gaps are a special gap $\ca^c$ and gaps of the form $\si_2(C(m))$, where $m$ is a minor
  such that $\lam(m)$ is almost non-renormalizable.
(There are also finite gaps of $\qml^{nr}$; each such gap is a gap of $\qml$ too,
associated to a non-renormalizable lamination.) Observe that for any
periodic minor $m$ the edges of the 
set $\si_2(C(m))$
are leaves of $\qml$ (they are minors of
laminations that tune $\lam(m)$).
The gap $\ca^c$, the
\emph{combinatorial main cardioid}, is the central gap of $\qml^{nr}$ (and of $\qml$ itself).
By definition, it is bounded by all periodic minors $m$, for which
$\lam(m)$ has an \emph{invariant} finite gap adjacent to $m$,
  or $m$ is an \emph{invariant} leaf of $\lam(m)$.
There are no leaves of $\qml$ in $\ca^c$, except for the edges of $\ca^c$.
The lamination $\qml^{nr}$ was introduced in \cite{bot17}.

Consider a gap $\si_2(C(m))$ of $\qml^{nr}$, where $m$ is a non-degenerate periodic minor
(then $\lam(m)$ is almost non-renormalizable).
Observe that $\si_2(C(m))$ is
invariant under $\si_2^p$, where $p$ is the (minimal) period of $m$.
There is a monotone map $\xi_m$ from the boundary of $\si_2(C(m))$ to $\uc$ that collapses all edges of $\si_2(C(m))$.
We may also arrange that $\xi_m$ semi-conjugates $\si_2^p$ restricted to the boundary of $\si_2(C(m))$ with $\si_2$.
Under $\xi_m$, any leaf $ab\in\qml$ lying in $\si_2(C(m))$ is mapped to a leaf $\xi_m(ab)=\xi_m(a)\xi_m(b)$ of $\qml$.
In this sense, we say that leaves of $\qml$ lying in $\si_2(C(m))$ form a \emph{baby $\qml$}.
Thus, $\qml$ admits the following \emph{self-similar description}:
the lamination $\qml$ is the union of $\qml^{nr}$ and all baby $\qml$s inserted in infinite gaps of the form $\si_2(C(m))$.

To complete this self-similar description we suggest an explicit construction for $\qml^{nr}$ in terms of offsprings.

\begin{thmB}
  The lamination $\qml^{nr}$ is obtained as the set of all offsprings of the edges $m\subset\ca^c$ and the limits of such offsprings.
\end{thmB}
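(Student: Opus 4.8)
The plan is to show the two inclusions: (i) every offspring of an edge $m\subset\ca^c$ (and every limit of such) is a leaf of $\qml^{nr}$, and (ii) every leaf of $\qml^{nr}$ is obtained as such an offspring or a limit of offsprings. For (i), note first that offsprings of minors are minors by Theorem A, so they are leaves of $\qml$; since $\qml^{nr}$ is closed (it is a lamination), limits of offsprings will automatically be leaves of $\qml$, and it remains to rule out that an offspring $m_a$ of an edge of $\ca^c$ ``enters'' a gap $\si_2(C(m))$ — i.e., that $\lam(m_a)$ is properly renormalizable. The key point is that, by construction, $m_a$ is a child of some minor $m'_l$ with $m_a < m'_l \le m$ where $m$ is the original edge of $\ca^c$, and $\si_2^{l-1}(m_a)$ is a major of $\lam(m'_l)$; I would trace the orbit of $m_a$ to show its hull meets no set of the form $\si_2(C(m''))$ for a periodic $m''$, using that the edge $m\subset\ca^c$ is not strictly behind any periodic minor whose lamination it properly tunes (this is essentially the defining property of $\ca^c$: it is the central gap, so nothing lies ``above'' its edges in the tuning order except $\ca^c$ itself).

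For (ii), I would start from the description of $\qml^{nr}$ given in the text: its leaves are precisely the minors of almost non-renormalizable laminations together with all singletons, and its infinite gaps are $\ca^c$ and the gaps $\si_2(C(m))$. So let $\mu$ be a non-degenerate leaf of $\qml^{nr}$, i.e., the minor of an almost non-renormalizable $\lam(\mu)$. If $\mu$ is an edge of $\ca^c$ there is nothing to prove (a leaf is trivially a limit of itself, or one sets up a degenerate offspring). Otherwise $\mu$ lies strictly behind some edge $m_0$ of $\ca^c$ (every non-edge leaf of $\qml^{nr}$ is separated from the center by a unique edge of $\ca^c$, since $\ca^c$ is the central gap). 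Now I would produce $\mu$ as a limit of offsprings of $m_0$: approximate an endpoint $b$ of $\mu$ by points $a_j < m_0$ whose $\si_2$-orbits hit an endpoint of $m_0$ (such points are dense behind $m_0$ because $m_0$ is a non-degenerate minor, hence has a non-periodic endpoint whose preimage tree under $\si_2$ is dense in the relevant arc), form the offsprings $m_{a_j}$ guaranteed by Theorem A, and argue that after passing to a subsequence the $m_{a_j}$ converge to a leaf of $\qml$ behind $m_0$; the combinatorial rigidity of $\qml$ (different minors do not cross, and the almost non-renormalizable minor $\mu$ is determined by the forward orbit of its endpoints) forces the limit to equal $\mu$ when the $a_j$ are chosen along the backward orbit of $b$ that ``converges to $\mu$ from behind.''

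The main obstacle I expect is in (ii): controlling the limit of the offsprings $m_{a_j}$ precisely enough to land on $\mu$ rather than on some other leaf behind $m_0$. The difficulty is that the definition of an offspring $m_a$ involves the \emph{least} index $l$ with $\si_2^l(a)$ an endpoint of a minor $m'_l$ with $a<m'_l\le m_0$, and as $a_j\to b$ this index $l=l(a_j)$ will typically tend to infinity and the intermediate minors $m'_{l(a_j)}$ will vary; one must show these intermediate minors shrink down toward $\mu$ so that the pullbacks $m_{a_j}$ are pinched against $\mu$. I would handle this by an inductive ``staircase'' argument along the nested sequence of minors separating $a_j$ from $m_0$, using that almost non-renormalizable laminations have no periodic minors strictly behind them (so the $m'_{l(a_j)}$ cannot stabilize on a periodic minor and create a baby $\qml$ obstruction), together with the standard fact that the diameters of the gaps of $\qml$ along any such nested sequence tend to $0$. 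The forward-invariance bookkeeping — matching $\si_2^{l-1}(m_{a_j})$ being a major of $\lam(m'_{l(a_j)})$ with the requirement that the limit major maps to $\mu$ — is the routine but delicate part, and is where most of the detailed verification will go.
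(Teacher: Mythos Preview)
Your part (i) is essentially the paper's argument, though stated loosely: the paper makes it precise via Theorem~\ref{t:com-pb}(3), which says that if $m_1$ is an offspring of $m_0$ and $\lam(m)\subsetneqq\lam(m_1)$, then $\lam(m)\subsetneqq\lam(m_0)$. Applying this with $m_0$ an edge of $\ca^c$ immediately rules out any offspring landing inside a gap $\si_2(C(m_1))$ of $\qml^{nr}$, since edges of $\ca^c$ are almost non-renormalizable.

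Your part (ii), however, has a real gap, and the paper takes a quite different route that sidesteps exactly the difficulty you flagged. You try to approximate an \emph{endpoint} $b$ of $\mu$ by preimage points $a_j$ and then hope the associated offsprings $m_{a_j}$ converge to $\mu$. But the offspring construction only pins down one endpoint; the other endpoint of $m_{a_j}$ is dictated by the intermediate minor $m'_{l(a_j)}$, and nothing in your argument prevents the $m_{a_j}$ from collapsing to the point $\{b\}$ or converging to a different leaf of $\qml^{nr}$ sharing the vertex $b$ (e.g.\ when $\mu$ is an edge of a finite gap of $\qml^{nr}$). Your ``staircase'' idea does not supply this control, and the claim that ``diameters of gaps along any nested sequence tend to $0$'' is neither proved nor obviously applicable here.

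The paper's device is to work \emph{inside $\lam(\mu)$} rather than with endpoints. One takes all iterated pullbacks of $m_0$ in $\lam(\mu)$ together with their limits; this is a sublamination $\lam'\subset\lam(\mu)$ containing $m_0$, hence non-empty. Almost non-renormalizability of $\lam(\mu)$ forces $\lam'=\lam(\mu)$, so in particular $\mu$ itself is a limit of $\lam(\mu)$-pullbacks of $m_0$. A separate lemma (Lemma~\ref{l:ofs-approx}) then upgrades ``$\mu$ is approximated by $\lam(\mu)$-pullbacks of $m_0$'' to ``$\mu$ is approximated by offsprings of $m_0$'': given a pullback $\ell_n\to\mu$, one takes a suitable forward image of $\ell_n$ lying between $\mu$ and $m_0$ (closest to $\mu$), and shows via Lemmas~\ref{l:new-minor-1} and~\ref{l:between} that this image is a minor, hence an offspring of $m_0$. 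This two-step reduction --- density of $m_0$-pullbacks in $\lam(\mu)$ from almost non-renormalizability, then conversion of dynamical pullbacks to offsprings --- is the missing idea in your proposal.
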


Theorem B parallels the encoding of the Mandelbrot set in terms of
``the Yoccoz combinatorial analytic invariants'' introduced by C.
Petersen and P. Roesch in \cite{pr08}, more specifically see Corollary
3.23 from \cite{pr08} (we are indebted to one the referees for this
remark).

\subsection*{Dynamical generation of the $\qml$}
Theorem B is the basis for a dynamical generation of the $\qml$.
The construction consists of three steps repeated countably many times, and then one
final step. 

\emph{Step 1.}
First, we construct all edges of the combinatorial main cardioid.
The endpoints of these edges can be computed explicitly.

\emph{Step 2.}
For every edge $m$ of the $\ca^c$, we construct all offsprings of $m$.
As follows from Theorem B, taking offsprings is as easy as taking pullbacks of a leaf in an invariant lamination.

\emph{Step 3.}
Take the limits of all offsprings from step 2.
We obtain a lamination behind $m$ with gaps of the form $\si_2(C(m_1))$,
  where $m_1$ is a periodic minor behind $m$ such that $\lam(m_1)$ is almost non-renormalizable.
Drawing these laminations for all edges of $\ca^c$ gives the lamination $\qml^{nr}$.

\emph{Step 4.}
In each gap of the form $\si_2(C(m_1))$ as above, construct chords whose $\xi_{m_1}$-images are leaves constructed at steps 1--3.
In other words, we repeat our construction for each baby $\qml$, and then keep repeating it countably many times.
Let us denote the thus obtained family of leaves of $\qml$ by $\qml^{fr}$.
By \cite{bot17}, $\qml^{fr}$ includes all minors of so-called \emph{finitely
renormalizable} quadratic laminations (``fr'' comes from ``finitely renormalizable'') so that the
only minors that are missing are the ones that correspond
to \emph{infinitely renormalizable} laminations, i.e. laminations
$\lam$ for which there exists a nested infinite sequence of pairwise
distinct laminations $\lam_1\subset \lam_2\subset \dots $ such that
$\lam_n\subset \lam$ for any $n$.

\emph{Step 5.} To get the missing minors we now take the limits of
leaves of $\qml^{fr}$.
Notice that, by \cite{bot17}, these limit minors are, for the most
part, degenerate (i.e., they are singletons in $\uc$). The limit minors that are
non-degenerate are exactly those that correspond to the quadratic laminations $\lam(m)$
that are infinitely renormalizable with the following additional property: $\lam(m)$ coincides with
a q-lamination $\lam_{\sim_m}$ associated to a laminational equivalence $\sim_m$ such that
the corresponding topological polynomial contains a periodic arc
in its topological Julia set.

\subsection*{Possible applications to other parameter slices}
The problem of constructing models of the entire connectedness locus in
degrees greater than $2$ seems to be rather complicated. Even in degree
three there are no known ``global'' models of this space. In this brief
discussion we will, therefore, talk about complex one dimensional
slices of parameter spaces of higher degree polynomials; moreover, for
the sake of simplicity we will only deal with the cubic case. Finally, for
the sake of brevity we assume familiarity with basic concepts of combinatorial
complex dynamics.

One of the main goals of this paper is to develop tools and techniques
that can be used to construct combinatorial models for complex one
dimensional slices of parameter spaces of cubic polynomials. Indeed, by
C. McMullen \cite{Mc00}, slices of the cubic connectedness locus
contain lots of copies of $\M_2$ to which our results apply directly
(in fact, the article \cite{Mc00} contains much more general results).

However otherwise the situation is not as simple. A lot of results show
that in the cubic case various parameter slices are \emph{not} locally
connected. Lavaurs \cite{la89} proved that the cubic connectedness
locus itself is not locally connected. Epstein and Yampolsky
\cite{EY99} showed that the bifurcation locus in the space of real
cubic polynomials is not locally connected either. Buff and Henriksen
\cite{BH01} presented copies of quadratic Julia sets, including not
locally connected Julia sets, in slices of $\M_3$. These are
complications of analytic and topological nature.

There are also combinatorial hurdles that need to be overcome. To begin
with, Thurston's Central Strip Lemma \ref{l:Cstrip} fails already in
the cubic case; e.g., if a cubic lamination admits a critical
quadrilateral $Q$ associated with the critical strip $S$, and a
critical leaf $\ell$, then the forward orbit of $Q$ may come close to
$\ell$ and then enter $S$, a dynamical phenomenon impossible in the
quadratic case because of the Central Strip Lemma. In addition,
Thurston's No Wandering Triangle Theorem (Theorem \ref{t:noWT}) also
fails in the cubic case \cite{bo04, bo08}. This complicates \emph{both}
the task of constructing a combinatorial model of slices of cubic
polynomial spaces \emph{and} the task of applying the idea of the
present paper to such slices even assuming that the laminational model
for (some) slices have been constructed.

In fact, we are not aware of many combinatorial models of such spaces
(even though we believe that a lot of them admit combinatorial models
in terms of laminations). An example one might consider is given in the
paper \cite{bopt16c} which we now discuss. Consider the tripling map
$\si_3:\uc\to \uc$ and fix a critical leaf $D$ of $\si_3$. Moreover,
choose $D$ so that it cannot be a boundary leaf of a periodic Siegel
gap. Then consider the space of all cubic laminational equivalence
relations $\sim$ which have a critical class containing the endpoints
of $D$ (e.g., the endpoints of $D$ may well be a class of this
equivalence relation). Observe that in this case the class containing
the endpoints of $D$ must be finite.

To each such equivalence relation $\sim$ we associate its \emph{minor
set} $m_\sim$ defined as follows. First, if there is a unique critical
set (class) of $\sim$, then $m_\sim$ is the convex hull of its image.
Second, if there are two $\sim$-classes and both are finite, then we
choose the one not containing the endpoints of $D$ and set $m_\sim$ to
be the convex hull of the image of this $\sim$-class.
Finally, consider the remaining case which is as
follows: $\sim$ has a unique periodic \emph{critical} Fatou gap of
period $k$ such that $\si_3^k:U\to \si_3(U)$ is two-to-one. Evidently,
this implies that $\si_3^k:U\to U$ is two-to-one. We show in
\cite{bopt16c} that there is a unique edge $M_\sim$ of $U$ of period
$k$. In this case we set $m_\sim=\si_3(M_\sim)$.

One of the main results of \cite{bopt16c} is that the minor sets
$m_\sim$ can be viewed as tags of their laminational equivalence
relations $\sim_D$ (so that the space of all such laminational
equivalence relations is similar to $\M_2$) while the collection of their
convex hulls will give rise to a lamination $\lam_D$. The corresponding space
of all cubic laminations that admit critical leaf $D$ is $\uc/\sim_D$.
We hope that the ideas and results of this paper can be properly
adjusted to lead to a more explicit description of the structure of
$\uc/\sim_D$ at least for some critical leaves $D$. A likely candidate
for that is the critical leaf $D=\ol{\frac13 \frac23}$, first preimage
of a $\si_3$-fixed angle $0$. This is based upon the fact that if
$D=\ol{\frac13 \frac23}$, then we can prove the Central Strip Lemma for
all laminations admitting $D$, and this allows us to apply similar
arguments to the present paper, in particular concerning pulling back
the minors and thus constructing new minors.

In general, the plan can be as follows. Consider a parameter slice and
assume that its combinatorial model exists. This model will be a
lamination $\lam$ in $\disk$. In order to construct $\lam$, we will
apply a similar procedure to the one described above for $\qml$. Steps
1--3 will be replaced with similar steps. However, step 4 will operate
with genuine baby $\qml$s rather than copies of $\lam$. Thus, the
lamination $\lam$ will consist of a sublamination $\lam^{nr}$ in whose
infinite gaps we insert copies of $\qml$ rather then copies of
$\lam^{nr}$ itself.

Evidently, a lot of details in the actual implementation of the
outlined approach will be very different from what is done in the
current paper.
There are also complications related to the fact
that some quadratic techniques fail for higher-degree polynomials.
Instead of Thurston's technique based on the Central Strip Lemma, we
will have to rely on methods developed in \cite{bopt16c} or, more
generally, in \cite{bopt17}. However, even in the simplest cases of
cubic parameter laminations, a complete implementation of this program
will require at least as much space as this paper. Thus we postpone the
details to future publications. Still, we believe that the sketched
technique should (hopefully!) work for some (but not all) complex one
dimensional slices.

To summarize, we think that while our dynamical approach to the
construction of the Mandelbrot set is quite consistent with the more
static viewpoints of Thurston \cite{thu85}, Keller \cite{kel00},
Lavaurs \cite{lav86, la89}, and Schleicher \cite{sch09}, it is based
upon a familiar pullback construction which has its own advantages, in
particular making it more accessible to those familiar with that
dynamically-based process.

\section{Majors and minors}

In this section, we recall fundamental properties of quadratic laminations.
Since all statements here can be traced back to \cite{thu85},
  we skip references to this
  seminal paper of Thurston until the end of the section (see also \cite{sch09} and \cite{bopt16}
  where some of these results are more 
  fleshed out).
The exposition is adapted to our purposes, and some facts are stated in a different but equivalent form
(see \cite{bmov13} for an extension of this approach to higher degree laminations).
Some proofs are omitted.

\subsection{Notation and terminology}
As usual, $\C$ is the plane of complex numbers identified with the real 2-dimensional vector space $\R^2$.
For any subset $A\subset\C$, we let $\ol A$ denote its closure.
For any set $G\subset\ol\disk$ of the form $G=\ch(G\cap\uc)$, we let $\si_2(G)$ denote the set $\ch(\si_2(G\cap\uc))$.
Chords of $\uc$ on the boundary of $G$ are called \emph{edges} of $G$.
A chord of $\uc$ with endpoints $a$, $b\in\uc$ is denoted by $ab$.
If $a=b$, then the chord is said to be \emph{degenerate}, otherwise it is said to be \emph{non-degenerate}.

We will identify $\R/\Z$ with $\uc$ by means of the map $\theta\in\R/\Z\mapsto \ol\theta=e^{2\pi i\theta}$.
Elements of $\R/\Z$ are called \emph{angles}.
The point $\ol\theta$ will be sometimes referred to as the point in $\uc$ of angle $\theta$.
For example $\ol 0$ and $\ol{\frac 12}$ are the only points of $\uc$ lying on the real axis,
  and $\ol{0\frac 12}$ is the corresponding diameter.
In order to avoid confusion, we will always write $\ol 0$, $\ol{\frac 12}$, $\ol{\frac 14}$ rather than $1$, $-1$, $i$, etc.

Let $M$ be a chord of the unit circle.
We will write $-M$ for the chord obtained from $M$ by a half-turn, i.e., by the involution $z\mapsto -z$.
Let $S$ be the (closed) strip between $M$ and $-M$. Define
the map $\psi:[0, \frac12]\to [0, \frac12]$ by $\psi(x)=2x$ if $0\le x\le \frac14$ and
$\psi(x)=1-2x$ if $\frac14\le x\le \frac12$; the fixed points of $\psi$ are $0$ and $\frac13$. Then
it is easy to see that given a chord $\ell$, we have
$|\si_2(\ell)|=\psi(|\ell|)$. The dynamics of $\psi$ shows that for any non-degenerate chord $\ell$
there exists $n\ge 0$ such that $|\si_2^n(\ell)|\ge \frac13$. Hence if $\pm M$ are the majors of a lamination
then $|M|\ge \frac13$ and $|\si_2(M)|\le \frac13$.
Suppose that $|M|\ge \frac 13$ and that $m=\si_2(M)$ is disjoint from the interior of $S$.
Then the chords $\pm M$ and the strip $S$ are uniquely determined by $m$.
Under the assumptions just made, we call $m$ \emph{minor-like},
set $S=S(m)$, and call it the \emph{central strip} of $m$.
Observe that if $m$ is degenerate, then $S(m)=M=-M$ is a diameter, in particular, it has no interior.
We will write $Q(m)$ for the quadrilateral $\ch(M\cup(-M))$.

\begin{lemma}\label{l:mi-li}
Suppose that $\ell=ab, a\ne b$ is a leaf of a lamination $\lam$ such that $|\ell|\le \frac13$ and $\ol{0}\notin \ol{H(\ell)}$.
Then $\ell$ is minor-like.
In particular if $m$ is a minor and $\ell\le m$, then $\ell$ is minor-like.
\end{lemma}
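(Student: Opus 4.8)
The plan is to exhibit the required ``major'' explicitly as one of the two long preimages of $\ell$ under $\si_2$, and then to use the defining properties of a quadratic lamination to rule out the one bad configuration in which $\ell$ could meet the associated central strip.

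First I would normalize coordinates. Since $\ol0\notin\ol{H(\ell)}$ and $|\ell|<\frac12$, the two endpoints of $\ell$ lie in $\uc\sm\{\ol0\}$ and may be written $\ol x,\ol y$ with $0<x<y<1$, $H(\ell)=(x,y)$, and $t:=y-x=|\ell|\le\frac13$. The chords of $\uc$ carried onto $\ell$ by $\si_2$ are then exactly $N=\ol{x/2}\,\ol{y/2}$ and $-N=\ol{(x+1)/2}\,\ol{(y+1)/2}$, both of length $\frac t2\le\frac16$, together with $M=\ol{x/2}\,\ol{(y+1)/2}$ and $-M=\ol{(x+1)/2}\,\ol{y/2}$, both of length $\frac12-\frac t2\ge\frac13$. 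So the only possible major of a minor-like chord equal to $\ell$ is $M$ (equivalently $-M$, which gives the same strip): $\si_2(M)=\ell$ and $|M|\ge\frac13$, and the whole statement reduces to checking that $\ell$ is disjoint from the interior of the strip $S$ between $M$ and $-M$. Working with the cyclic order $x/2<y/2<(x+1)/2<(y+1)/2$ one checks that $M$ and $-M$ are not linked, that $S\cap\uc=\ol{H(N)}\cup\ol{H(-N)}$ while the complementary arcs $\ol{H(M)}$ (this one containing $\ol0$) and $\ol{H(-M)}$ are the two arcs cut off by $M$ and $-M$, and that $\ell$ is automatically not linked with $M$ (both endpoints of $\ell$ lie on the arc of $M$ complementary to $H(M)$). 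Hence $\ell\cap\inte(S)=\0$ will hold as soon as both endpoints of $\ell$ lie in $\ol{H(-M)}$, i.e. as soon as $t\le x$ and $x+2t\le1$; equivalently $\ell\le-M$, which places $\ell$ in $\ol{D(-M)}$, a region meeting $S$ only along the edge $-M$.

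The crux, and the only point where the hypothesis ``$\ell$ is a leaf of a lamination'' is used, is proving $t\le x$ and $x+2t\le1$. By backward invariance some one of $N,-N,M,-M$ is a leaf of $\lam$; by the sibling property then $\{N,-N\}\subset\lam$ or $\{M,-M\}\subset\lam$. Suppose $x<t$. Then $x<y/2$, and since $t\le\frac13$ also $x+2t<1$ and hence $y<(x+1)/2$; now the cyclic order $x/2<x<y/2<y$ shows that $\ell$ crosses $N$, while the cyclic order $x<y/2<y<(x+1)/2$ shows that $\ell$ crosses $-M$, so in either alternative $\lam$ has two linked leaves, a contradiction. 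Hence $x\ge t$. Similarly, if $x+2t>1$, then $x>1-2t\ge\frac13\ge t$, so $x>t$ and $y/2<x$; the cyclic order $x<(x+1)/2<y<(y+1)/2$ shows that $\ell$ crosses $-N$, and the cyclic order $y/2<x<(x+1)/2<y$ shows that $\ell$ crosses $-M$, again a contradiction. Hence $x+2t\le1$, so $\ell\le-M$ and $\ell$ is disjoint from $\inte(S)$; thus $m:=\si_2(M)=\ell$ is minor-like.

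The ``in particular'' then follows from the first statement: if $m$ is a minor and $\ell\le m$, then $\ol{H(\ell)}\subseteq\ol{H(m)}$, so $|\ell|\le|m|\le\frac13$ and $\ol0\notin\ol{H(\ell)}$, since every minor has length at most $\frac13$ and none has $\ol0$ behind it ($\ol0$ lies on the boundary of the central gap $\ca^c$ of $\qml$). The main obstacle is the crux step above: minor-likeness is a purely planar condition on a single chord and it genuinely fails for arbitrary short chords that creep close to $\ol0$, so the real work is to squeeze out of backward invariance, the sibling property, and the absence of linked leaves exactly the fact that a leaf of a lamination cannot be such a chord.
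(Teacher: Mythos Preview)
Your proof is correct and follows the same overall strategy as the paper's: locate the candidate major $M$ explicitly, use backward invariance plus the sibling property to force either $\{N,-N\}\subset\lam$ or $\{M,-M\}\subset\lam$, and then obtain a contradiction from a crossing with $\ell$ whenever a vertex of $Q(\ell)$ falls into $H(\ell)$.

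The one genuine difference is in how the ``two vertices of $Q(\ell)$ in $H(\ell)$'' scenario is handled. The paper treats it as a live case and eliminates it by a dynamical fixed-point argument (if two preimages lay in $H(\ell)$, a short pullback $\ell'\subset D(\ell)$ would force $\ol 0\in\ol{H(\ell')}\subset\ol{H(\ell)}$). You instead observe, via the arithmetic $x<t\Rightarrow x+2t<3t\le 1$, that the two bad inequalities $x<t$ and $x+2t>1$ are mutually exclusive when $t\le\tfrac13$, so that case is vacuous. This makes your argument shorter and more elementary; what the paper's route buys is a cleaner geometric picture (and it would survive if one ever weakened the length hypothesis), but for the lemma as stated your shortcut is entirely legitimate.
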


\begin{proof}
Either two or all four edges of $Q(\ell)$ are leaves of $\lam$. If only
one vertex of $Q(\ell)$ belongs to $H(\ell)$, then at least one edge of
$Q(\ell)$ belongs to $\lam$ and crosses $\ell$, a contradiction.
Hence either two preimages $a'$, $b'$ of points $a$ and $b$,
respectively, belong to $H(\ell)$, or none. Set $\ell'=a'b'$; then
$\si_2(\ell')=\ell$. Suppose that $\ell'\ne \ell$; then
$|\ell'|<|\ell|$. If $\si_2(H(\ell'))$ is $\uc\sm \ol{H(\ell)}$ then
the fact that $|\ell|\le \frac13$ implies that $|\uc\sm
\ol{H(\ell)}|\ge \frac23$ and hence $|H(\ell')|\ge \frac13$, a
contradiction with $|\ell'|<|\ell|\le \frac13$. Hence
$\si_2(H(\ell'))=H(\ell)$ and $|\ell'|\le \frac 16$. Moreover, the
restriction of $\si_2$ to $H(\ell')$ is one-to-one and expanding. It
follows that $\si_2$ has a fixed point in $H(\ell')$. The only fixed
point of $\si_2$ is $\ol 0$, hence we have $\ol 0\in \ol{H(\ell')}$, a
contradiction.
Thus, either $\ell'=\ell$ or $Q(\ell)\cap H(\ell)=\0$ (evidently, all vertices of $Q(\ell)$ cannot belong to $\ol{H(\ell)}$).
In the former case it follows that
$\ell=\ol{\frac13 \frac23}$ is a minor, in the latter case $\ell$ is
minor-like by definition. For the last claim of the lemma, note that if
$m$ is a minor, then $\ol{0}\notin \ol{H(m)}$.
\end{proof}

A \emph{critical chord} is a diameter of $\uc$.
The endpoints of a critical chord are mapped under $\si_2$ to the same point of $\uc$.
A set $G\subset\ol\disk$ of the form $G=\ch(G\cap\uc)$ is said to be \emph{semi-critical} if $G$ contains a critical chord.
Equivalently, a semi-critical set contains the center of the disk.

\subsection{The Central Strip Lemma}
A chord of $\uc$ is said to be \emph{vertical} if it separates $\ol 0$
from $\ol{\frac 12}$, and \emph{horizontal} otherwise. The distinction
between the two types of chords is important for quadratic laminations.

\begin{lemma}
  \label{l:S-forw}
  Let $m$ be a minor-like chord.
  Then $\si_2(S(m)\cap\uc)=\ol{H(m)}$.
\end{lemma}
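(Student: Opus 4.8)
The plan is to reduce the identity $\si_2(S(m)\cap\uc)=\ol{H(m)}$ to a statement about a single arc and then pin that arc's image down by a length count. Let $\pm M$ be the majors and $S=S(m)$ the central strip determined by the minor-like chord $m$, so that $m=\si_2(M)$. First I would describe $S\cap\uc$ explicitly: it is obtained from $\uc$ by removing the two open arcs $H(M)$ and $H(-M)$, and since $S$ is symmetric under $z\mapsto-z$ it has exactly two components, forming an antipodal pair of arcs $I$ and $-I$. Because $\si_2(-z)=\si_2(z)$, we have $\si_2(-I)=\si_2(I)$, so it suffices to prove $\si_2(I)=\ol{H(m)}$ for one of them. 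If $m$ is degenerate this is immediate: then $S(m)=M$ is a diameter, $S(m)\cap\uc$ is a pair of antipodal points, and both are sent by $\si_2$ to the single point of $m$. So I would assume $m$ non-degenerate, i.e.\ $|M|<\tfrac12$.

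Next I would run the length count. Since $m$ is minor-like, $|M|\ge\tfrac13$, hence $|m|=\psi(|M|)=1-2|M|\in(0,\tfrac13]$; moreover the two removed arcs have length $|M|$ each and are disjoint, so $I$ is an arc of length $\tfrac12(1-2|M|)\le\tfrac16$ whose two endpoints are an endpoint of $M$ and an endpoint of $-M$. As $|I|<\tfrac12$, the map $\si_2$ is injective on $I$, so $\si_2(I)$ is an embedded arc of length $2\cdot\tfrac12(1-2|M|)=|m|$. Its endpoints are the $\si_2$-images of the endpoints of $I$, i.e.\ the image of an endpoint of $M$ and the image of an endpoint of $-M$; since $\si_2(-M)=\si_2(M)=m$, both of these images are endpoints of $m$, and by injectivity of $\si_2$ on $I$ they are distinct — hence they are exactly the two endpoints of $m$. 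Therefore $\si_2(I)$ is one of the two circle arcs joining the endpoints of $m$, and since its length is $|m|\le\tfrac13$ while the complementary arc has length $1-|m|\ge\tfrac23>|m|$, it must be the shorter one, $\ol{H(m)}$. This completes the argument.

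The steps that require genuine care, and what I expect to be the main obstacle, are the combinatorial description of $S(m)\cap\uc$ — that it consists of two antipodal arcs, each joining an endpoint of $M$ to an endpoint of $-M$ (degenerating to a pair of antipodal points when $|M|=\tfrac12$) — together with the verification that the two endpoints of $I$ are not identified by $\si_2$. The latter is exactly where the minor-like hypothesis enters: $|M|\ge\tfrac13$ forces $|I|\le\tfrac16<\tfrac12$, which yields the injectivity. Everything else is a direct consequence of the formula $|\si_2(\ell)|=\psi(|\ell|)$ recorded before Lemma~\ref{l:mi-li}.
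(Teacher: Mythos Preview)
Your argument is correct and follows essentially the same route as the paper: describe $S(m)\cap\uc$ as two antipodal arcs of length $\le\tfrac16$, observe they have the same $\si_2$-image, and identify that image as $\ol{H(m)}$ by noting it is an arc of length $\le\tfrac13<\tfrac12$ with the endpoints of $m$ as its boundary. The paper compresses all of this into three sentences, while you spell out the length computations and the injectivity of $\si_2$ on short arcs, but there is no substantive difference.
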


\begin{proof}
  The set $S(m)\cap\uc$ consists of two arcs, each of length $\le \frac 16$.
Both arcs map to the same arc $A$ of length $\le\frac 13<\frac 12$.
On the other hand, $A$ is bounded by the endpoints of $m$, hence $A=\ol{H(m)}$.
\end{proof}

\begin{lemma}
  \label{l:S-vert}
  Let $m$ be a non-degenerate minor-like chord.
  Then $S(m)$ is bounded by vertical chords.
\end{lemma}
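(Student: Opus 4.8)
The plan is to prove that the fixed point $\ol 0$ of $\si_2$ does not belong to $S(m)\cap\uc$. This immediately gives the conclusion: the endpoints of $M$ and $-M$ cut $\uc$ into four arcs $H(M)$, $I_1$, $H(-M)$, $I_2$ (in cyclic order), where $I_1$, $I_2$ are the two components of $S(m)\cap\uc$, so $\uc\setminus(S(m)\cap\uc)=H(M)\cup H(-M)$; and using $\ol{\tfrac12}=-\ol 0$ and $H(-N)=-H(N)$ one checks that, for either $N\in\{M,-M\}$, the chord $N$ separates $\ol 0$ from $\ol{\tfrac12}$ precisely when exactly one of $\ol 0$, $\ol{\tfrac12}$ lies in the open arc $H(M)$, i.e. precisely when $\ol 0\in H(M)\cup H(-M)$, i.e. when $\ol 0\notin S(m)\cap\uc$. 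Before starting I would record that, since $m$ is non-degenerate, $M$ is non-degenerate and $\tfrac13\le|M|<\tfrac12$ (minor-likeness forces $|M|\ge\tfrac13$, while $|M|=\tfrac12$ would make $M$ a diameter and hence $m=\si_2(M)$ a point), so $M$ and $-M$ are disjoint parallel chords and $S(m)$ is a genuine strip.

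First I would exclude $\ol 0$ from the interior of $S(m)\cap\uc$. Assume $\ol 0\in S(m)\cap\uc$. By Lemma~\ref{l:S-forw}, $\ol 0=\si_2(\ol 0)\in\si_2(S(m)\cap\uc)=\ol{H(m)}$, so $\ol 0$ lies behind $m$. On the other hand, write $S(m)$ as the intersection of $\ol\disk$ with the closed region between the two parallel lines containing $M$ and $-M$; since $m$ is disjoint from the interior of $S(m)$, the relative interior of the chord $m$ is a connected subset of $\disk$ missing the open strip, hence it lies in one of the two closed half-planes bounded by those lines, so $m\subseteq\ol{D(M)}$ or $m\subseteq\ol{D(-M)}$. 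Consequently $\ol 0$, being behind $m$, lies in $\ol{H(M)}$ or in $\ol{H(-M)}$; together with $\ol 0\in S(m)\cap\uc=\uc\setminus(H(M)\cup H(-M))$ this forces $\ol 0$ to be one of the four endpoints of $M$ and $-M$.

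It remains to rule out that last possibility. For this I would establish the auxiliary linking fact: if a chord $N$ has $\ol 0$ as an endpoint and $\tfrac13\le|N|<\tfrac12$, then $\si_2(N)$ crosses $-N$. After conjugating by $z\mapsto\bar z$ (which fixes $\ol 0$ and $\ol{\tfrac12}$ and commutes with $\si_2$) one may assume the other endpoint of $N$ has angle $t\in[\tfrac13,\tfrac12)$; then $\si_2(N)=\ol 0\,\ol{2t}$ and $-N=\ol{\tfrac12}\,\ol{t+\tfrac12}$, and the cyclic order $0<\tfrac12<2t<t+\tfrac12<1$ shows that the endpoints of these two chords alternate, so they link. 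Now if $\ol 0$ were an endpoint of $M$ (resp. of $-M$), applying this with $N=M$ (resp. $N=-M$) and using $\si_2(-M)=\si_2(M)=m$ shows that $m$ crosses one of $M$, $-M$; since $\pm M$ are the boundary chords of $S(m)$, the chord $m$ then meets the interior of $S(m)$, contradicting minor-likeness. Hence $\ol 0\notin S(m)\cap\uc$, and both $M$ and $-M$ are vertical.

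The substantive step is the middle one, where the dynamics enters: combining the fact that $\ol 0$ is the unique $\si_2$-fixed point with Lemma~\ref{l:S-forw} traps $\ol 0$ behind $m$, which is incompatible with $m$ avoiding the central strip. The only real nuisance is bookkeeping around the boundary: because separation is a strict notion, the case in which $\ol 0$ is an endpoint of a major must be handled on its own, and that is exactly the role of the small linking lemma.
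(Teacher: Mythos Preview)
Your proof is correct and follows essentially the same strategy as the paper's: assume $\ol 0\in S(m)$, apply Lemma~\ref{l:S-forw} to force $\ol 0\in\ol{H(m)}$, and then use minor-likeness to reach a contradiction. The paper phrases the intermediate conclusion as ``$\ol 0$ is an endpoint of $m$'' while you phrase it as ``$\ol 0$ is an endpoint of $\pm M$''; since $\ol 0$ is $\si_2$-fixed these are equivalent.

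The only real difference is in the endgame. After reducing to $\ol 0$ being an endpoint of $m$ (say $m=\ol{0\alpha}$), the paper simply observes that $\ol{\alpha/2}$ is a vertex of $Q(m)$ lying in the \emph{open} arc $H(m)$, which immediately contradicts $H(m)\subset\ol{H(\pm M)}$. You instead establish an explicit linking lemma (if $\ol 0$ is an endpoint of a chord $N$ with $\tfrac13\le|N|<\tfrac12$, then $\si_2(N)$ crosses $-N$) via a cyclic-order computation. Both arguments work; the paper's is shorter, while yours makes the boundary case more transparent and spells out why ``$m$ crosses $-M$'' really does force $m$ into the interior of $S(m)$. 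Either way the substantive idea is the same.
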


The only degenerate minor-like chord for which the statement fails, is $\ol 0$.

\begin{proof}
  Assume
  that the edges $\pm M$ of $S(m)$ are horizontal.
Then $\ol{0\frac 12}\subset S(m)$,  hence,
$\ol 0\in\ol{H(m)}$ by Lemma \ref{l:S-forw}.
Thus $\ol 0$ belongs to both $S(m)$ and $\ol{D(m)}$.
On the other hand, by definition of minor-like chords, these two sets cannot have common interior points.
It follows that $\ol 0$ is an endpoint of $m$.
Let $\ol\alpha$ be the other endpoint.
Then $\ol{\frac\alpha 2}\in H(m)\cap S(m)$, a contradiction.
\end{proof}

Let us make the following observations.
\begin{enumerate}
  \item[(a)] If $\ell$ is a chord of $\uc$ such that $|\ell|\le \frac 14$, then $|\si_2(\ell)|=2|\ell|$;
  otherwise $|\si_2(\ell)|=1-2|\ell|$.
  \item[(b)] We have $|\si_2(\ell)|> |\ell|$ if and only if $|\ell|< \frac 13$.
  \item[(c)] If $\ell$ is disjoint from the edges $\pm M$ of $S(m)$ and $|\ell|>|M|$, then $\ell$ is a vertical chord in $S(m)$
  (here $m=\si_2(M)$ is minor-like).
  \item[(d)] Any non-degenerate chord eventually maps to a chord of length $\ge \frac13$.
\end{enumerate}

\begin{lemma}[The Central Strip Lemma]
  \label{l:Cstrip}
  Let $m$ be a minor-like chord.
Suppose that the chords $\si_2^n(m)$ do not cross any edge of $S(m)$
for any $n>0$.
\begin{enumerate}
\item If $|\si_2^n(m)|<|m|$ for a minimal $n>0$, then
    $\si_2^{n-1}(m)$ is a vertical chord in $S(m)$ distinct from either edge of $S(m)$, and
    $\si_2^n(m)\le m$;
\item if $\si_2^n(m)\subset S(m)$ for some $n>0$, and $n$ is the smallest positive integer with this property,
  then the chord $\si_2^n(m)$ is vertical.
\end{enumerate}
\end{lemma}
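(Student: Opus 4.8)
The plan is to analyze how the length of the iterates $\si_2^n(m)$ behaves under the standing hypothesis that no $\si_2^n(m)$ crosses an edge $\pm M$ of the central strip $S(m)$, and to extract geometric consequences from the length dynamics of $\psi$. Recall from observation (b) that $|\si_2(\ell)|>|\ell|$ exactly when $|\ell|<\frac13$, and that $|M|\ge\frac13$ while $|m|=|\si_2(M)|\le\frac13$; in particular $|m|<|M|$ unless $m$ is degenerate or $m=\ol{\frac13\frac23}$, cases one handles separately or trivially. I would first establish a ``trapping'' dichotomy: since the orbit of $m$ never crosses $\pm M$, each iterate $\si_2^n(m)$ lies entirely in $S(m)$, or entirely in one of the two components of $\ol\disk\sm S(m)$, or coincides with an edge. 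This is because $\pm M$ separate $\ol\disk$ into three regions and a chord that meets two of them must cross a boundary edge.

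For part (1), suppose $n>0$ is minimal with $|\si_2^n(m)|<|m|$. Then $|\si_2^j(m)|\ge |m|$ for all $0\le j\le n-1$, and I claim in fact $|\si_2^{n-1}(m)|>|M|$. Indeed, by observation (a) the only way to get $|\si_2^n(m)|<|m|\le\frac13\le|M|$ is to have $|\si_2^{n-1}(m)|>\frac13$ and apply the contracting branch $x\mapsto 1-2x$; a short estimate with $\psi$ shows that if $|\si_2^{n-1}(m)|$ were $\le|M|$ then $|\si_2^n(m)|=1-2|\si_2^{n-1}(m)|\ge 1-2|M|=|m|$ (using $|m|=\psi(|M|)=1-2|M|$ since $|M|\ge\frac14$), contradicting strict inequality. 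Hence $|\si_2^{n-1}(m)|>|M|$, and by observation (c) together with the trapping dichotomy (the chord $\si_2^{n-1}(m)$ is disjoint from $\pm M$ and longer than $M$, so it cannot lie in a complementary component, whose chords are all shorter than $M$), the chord $\si_2^{n-1}(m)$ is a vertical chord strictly inside $S(m)$, distinct from $\pm M$. Finally $\si_2^n(m)$ is the image of a vertical chord in $S(m)$; by Lemma \ref{l:S-forw} the endpoints of $\si_2^{n-1}(m)$ lie in $S(m)\cap\uc$, so its image lies in $\ol{H(m)}$, giving $\si_2^n(m)\le m$.

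For part (2), let $n>0$ be minimal with $\si_2^n(m)\subset S(m)$. The chord $\si_2^{n-1}(m)$ lies outside $\inte(S(m))$ by minimality (and is not an edge, else its image would be $m\not\subset S(m)$ by Lemma \ref{l:S-vert}'s verticality combined with $m$ being horizontal-free near $\ol 0$, or more simply $\si_2(M)=m$ which is not in the interior of $S(m)$), so $\si_2^{n-1}(m)$ lies in one of the two complementary components of $S(m)$ and is therefore horizontal. Now $\si_2^n(m)\subset S(m)$, and since the orbit never crosses $\pm M$, Lemma \ref{l:S-forw} forces the endpoints of $\si_2^n(m)$ into $S(m)\cap\uc$, which consists of two arcs each of length $\le\frac16$ lying on opposite sides of the diameter $\ol{0\frac12}$: if both endpoints were on the same arc, the chord would have length $\le\frac16<|M|$, and one checks this chord would then lie in that complementary-side ``cap'' rather than in $S(m)$ — so the two endpoints lie on opposite arcs, which is precisely the condition for $\si_2^n(m)$ to be vertical.

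The main obstacle I anticipate is part (2): ruling out the possibility that $\si_2^n(m)$ sits inside $S(m)$ with both endpoints on the same boundary arc of $S(m)\cap\uc$ (hence horizontal) requires carefully tracking which side of the diameter the endpoints land on under $\si_2$, using that $\si_2$ maps each of the two arcs of $S(m)\cap\uc$ homeomorphically onto $\ol{H(m)}$ (Lemma \ref{l:S-forw}) in an orientation-respecting way. The bookkeeping of endpoint positions — and confirming that a short chord trapped in $S(m)$ cannot actually be a vertical sub-chord of $S(m)$ unless it straddles the diameter — is where the argument needs the most care; everything else reduces to the one-dimensional dynamics of $\psi$ and the trapping trichotomy.
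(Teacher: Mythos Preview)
Your argument for part (1) is correct and is essentially the paper's argument, fleshed out: from minimality and the dynamics of $\psi$ you obtain $|\si_2^{n-1}(m)|>|M|$, then observation (c) gives verticality inside $S(m)$, and Lemma~\ref{l:S-forw} gives $\si_2^n(m)\le m$.

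Your part (2), however, has a genuine gap. You assert that if both endpoints of $\si_2^n(m)$ lie on the same arc of $S(m)\cap\uc$, then ``this chord would then lie in that complementary-side `cap' rather than in $S(m)$.'' This is false: $S(m)$ is convex (it is the strip bounded by $\pm M$), so any chord with both endpoints on one of its boundary arcs lies in $S(m)$, not outside it. Such a chord is precisely a horizontal chord contained in $S(m)$, which is exactly the case you must exclude --- and your argument does not exclude it. (A smaller issue: your claim that $\si_2^{n-1}(m)$, lying in a complementary component $D(\pm M)$, is ``therefore horizontal'' is also not automatic, since $D(M)$ contains one of $\ol 0$, $\ol{\frac12}$ and hence supports vertical chords; but you do not actually use this.)

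The paper handles (2) by reducing it to (1), and this is both shorter and avoids the endpoint bookkeeping you flag as the obstacle. Each arc of $S(m)\cap\uc$ has length $\frac12-|M|=\frac{|m|}2$, so a horizontal chord in $S(m)$ has length at most $\frac{|m|}2<|m|$. If $\si_2^n(m)\subset S(m)$ were horizontal, choose $k\le n$ minimal with $|\si_2^k(m)|<|m|$; by part (1) the chord $\si_2^{k-1}(m)$ is vertical in $S(m)$ and distinct from $\pm M$. Since $k-1\ge 1$ (the case $k=1$ would force $m$ itself to be such a chord, contradicting that $m$ is minor-like), this contradicts the minimality of $n$.
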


\begin{proof}
We will write $\pm M$ for the edges of $S(m)$. To prove $(1)$, observe
that $|\si_2^{n-1}(m)|>|M|$ which implies that $\si_2^{n-1}(m)\subset
S(m)$ is a vertical chord. Observe now that $(2)$ follows from $(1)$
since if $\si_2^n(m)\subset S(m)$ is horizontal, then $|\si_2^n(m)|\le
|m|/2$.
\end{proof}

\subsection{Minor leaf laminations}
By definition, the Central Strip Lemma, and by observations (a) -- (d), a minor $m$ has the following properties:

\begin{enumerate}
  \item[(SA1)] it is minor-like;
  \item[(SA2)] all $\si_2^n(m)$, where $n\ge 0$, are pairwise
      unlinked and do not cross any edges of $S(m)$;
  \item[(SA3)] for any $n>0$ we have $|\si_2^n(m)|\ge |m|$;
  \item[(SA4)] if $\si_2^n(m)\le m$ for some $n>0$, then
      $\si_2^n(m)=m$ (thus, images of $m$ are disjoint from
      $D(m)\cup S(m)\sm (M\cup -M)$).
\end{enumerate}

For brevity, in what follows we will refer to these properties simply
as SA1, SA2, SA3 and SA4. Clearly, SA3 always implies SA4. Moreover, by
the Central Strip Lemma, if SA1 and SA2 hold for a chord $m$, then SA3
and SA4 for this chord are equivalent.

\begin{definition}[Stand Alone Minor]\label{d:stalone}
A chord $m$ is called a \emph{stand alone minor} if properties SA1--SA3
 hold.
(Then automatically SA4 also holds).
\end{definition}

Note that all points of $\uc$ are stand alone minors. Any stand alone
minor is the minor of a certain quadratic lamination. Any such
lamination can be constructed by ``pulling back'' the minor and all
its images. Such pullbacks are mostly unique but, if $m$ is periodic, allow
for small variations.

In this paper we establish \emph{dynamical conditions} that imply that
certain leaves of a lamination $\lam$ with minor $m$ are minors
themselves. We do this by verifying for them that they are stand alone
minors. This requires checking for them conditions SA1 -- SA3. It turns
out that depending on the location of $\ell$ with respect to $m$ or the
length of $\ell$ with respect to the length of $m$ some of these
conditions easily follow.

\begin{lemma}\label{l:nodrop}
Let $\ell$ be a leaf of a lamination $\lam$ with minor $m$. Then the
following holds.

\begin{enumerate}

\item Choose the least $i\ge 0$ with $|\si_2^i(\ell)|\ge |m|$. Then
    $|\si_2^j(\ell)|\ge |m|$ for any $j\ge i$. Thus, if
    $|\ell|\le |m|$ then
    $|\ell|\le |\si_2(\ell)|\le \dots \le |\si_2^i(\ell)|$
    so that property $\mathrm{SA3}$ holds for
    $\ell$. In particular, $\si_2^t(m)\le m, \si_2^t(m)\ne m$ is impossible.

\item If $m\le \ell$, then no eventual image of $\ell$ crosses the
    edges of $S(\ell)$ so that property $\mathrm{SA2}$ holds for $\ell$.

\end{enumerate}

\end{lemma}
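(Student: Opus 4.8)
The plan is to derive both parts purely from elementary length bookkeeping for $\si_2$, together with two structural facts about the lamination $\lam$ carrying the minor $m$: the majors $\pm M$ of $\lam$ are its longest leaves, and $|M|\ge\frac13$, so that $|m|=|\si_2(M)|=1-2|M|$, and $|\si_2^k(\ell)|\le|M|$ for every leaf $\ell$ of $\lam$ and every $k\ge 0$.

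For part (1), I would first prove the inductive step ``$|\si_2^j(\ell)|\ge|m|$ implies $|\si_2^{j+1}(\ell)|\ge|m|$'' by splitting into two cases. If $|\si_2^j(\ell)|\le\frac13$, then by observation (a) the length cannot decrease while it is $\le\frac13$, so $|\si_2^{j+1}(\ell)|\ge|\si_2^j(\ell)|\ge|m|$. If instead $|\si_2^j(\ell)|>\frac13$, then $|\si_2^{j+1}(\ell)|=1-2|\si_2^j(\ell)|$, and since $\si_2^j(\ell)$ is a leaf of $\lam$ we have $|\si_2^j(\ell)|\le|M|$, whence $|\si_2^{j+1}(\ell)|\ge 1-2|M|=|m|$. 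Starting the induction at $j=i$ (where $|\si_2^i(\ell)|\ge|m|$ by the choice of $i$) gives $|\si_2^j(\ell)|\ge|m|$ for all $j\ge i$, which is the first assertion. When $|\ell|\le|m|$, minimality of $i$ forces $|\si_2^k(\ell)|<|m|\le\frac13$ for every $k<i$, so observation (b) yields the strict chain $|\ell|<|\si_2(\ell)|<\dots<|\si_2^i(\ell)|$; combined with $|\si_2^j(\ell)|\ge|m|\ge|\ell|$ for $j\ge i$, this is exactly property SA3 for $\ell$. The final ``in particular'' follows by applying this to $\ell=m$ (so $i=0$): SA3 gives $|\si_2^t(m)|\ge|m|$ for every $t>0$, while $\si_2^t(m)\le m$ with $\si_2^t(m)\ne m$ would force $|\si_2^t(m)|<|m|$ (as recorded in the Introduction), a contradiction.

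For part (2), the pairwise unlinkedness in SA2 is automatic since all $\si_2^n(\ell)$ are leaves of $\lam$, so the content is that no $\si_2^n(\ell)$ crosses an edge of $S(\ell)$. The plan is a contradiction argument, after first arranging that $S(\ell)$ makes sense. I would split on the length of $\ell$: if $|\ell|>\frac13$ then $\ell$ is (up to siblings) a major of $\lam$ or a long vertical leaf of the central strip $S(m)$, and in either case the statement reduces to SA2 for the minor $m$, since $\si_2^n(\ell)$ is then of the form $\si_2^{n-1}(m)$ for $n\ge 1$; if $|\ell|\le\frac13$, then using $m\le\ell$ one checks that $\ol 0\notin\ol{H(\ell)}$ (otherwise $H(\ell)$, which also contains the endpoints of $m$, straddles exactly one endpoint of a major, so $\ell$ crosses that major), and Lemma \ref{l:mi-li} makes $\ell$ minor-like, so $S(\ell)$ and its edges $\pm M_\ell$ are defined, with $S(m)\subseteq S(\ell)$. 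Now suppose some $\si_2^n(\ell)$, with $n\ge 1$ minimal, crosses an edge of $S(\ell)$ ($n=0$ is impossible because a minor-like chord meets $\inte S(\ell)$ in the empty set). The key point I would establish is that this edge $M_\ell$ — being the long preimage of the leaf $\ell$ — is itself a leaf of $\lam$ (via backward invariance and the sibling property, using the position of $\ell$ behind the minor), so that the leaf $\si_2^n(\ell)$ crossing the leaf $M_\ell$ contradicts the definition of a lamination. If instead the offending edge of $S(\ell)$ sits inside a gap $G$ of $\lam$, I would fall back on the Central Strip Lemma \ref{l:Cstrip} applied to $\ell$: a first crossing at time $n$ forces $\si_2^{n-1}(\ell)$ to be a long vertical chord of $S(\ell)$ with $|\si_2^n(\ell)|<|\ell|$, and then Lemmas \ref{l:S-forw} and \ref{l:S-vert}, together with part (1) and the location of $m$ behind $\ell$, give the contradiction.

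The main obstacle I anticipate is exactly this last step of part (2): the geometric bookkeeping that locates the forward orbit of $\ell$ simultaneously with respect to $S(\ell)$ and the lamination's own central strip $S(m)$, and in particular making rigorous that the crossed edge of $S(\ell)$ is realized as a leaf of $\lam$ (equivalently, ruling out the gap case cleanly). Part (1), by contrast, is essentially mechanical once the identity $|m|=1-2|M|$ and the ``longest leaf'' property of the majors are in hand.
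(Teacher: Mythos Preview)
Your part~(1) is correct and is in fact cleaner than the paper's argument. The paper proves the first assertion by contradiction via the Central Strip Lemma: if $|\si_2^j(\ell)|<|m|$ for some $j\ge i$, then some $\si_2^k(\ell)$ would be a vertical chord strictly inside $S(m)$, contradicting that $\pm M$ are the longest leaves. Your direct inductive step---splitting on whether $|\si_2^j(\ell)|\le\frac13$ and using the identity $|m|=1-2|M|$ together with $|\si_2^j(\ell)|\le|M|$---avoids the Central Strip Lemma entirely and is purely arithmetic. Both approaches are valid; yours is more elementary, the paper's is more geometric.

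For part~(2) you have the right key idea---that the edges $\pm M_\ell$ of $S(\ell)$ are themselves leaves of $\lam$---but you surround it with machinery that is not needed and in places not correct. The paper's proof is a single observation: since $m\le\ell$, the \emph{horizontal} pullbacks of $\ell$ necessarily cross the majors $\pm M$ (the endpoints of $\ell$ lie outside $\ol{H(m)}$, so their $\si_2$-preimages lie in $H(M)\cup H(-M)$, and the short pullbacks connect $H(M)$ to $H(-M)$). By backward invariance and the sibling property, the pullbacks of $\ell$ that belong to $\lam$ must therefore be the vertical ones, i.e., the edges of $S(\ell)$. Once $\pm M_\ell\in\lam$, no image of $\ell$ can cross them, and you are done. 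There is no ``gap case'' to fall back on, and the Central Strip Lemma plays no role here. Your case $|\ell|>\frac13$ is also off: the claim that $\si_2^n(\ell)=\si_2^{n-1}(m)$ would require $\si_2(\ell)=m$, which holds only for $\ell=\pm M$; but in any event that case is moot, since $S(\ell)$ is only defined for minor-like $\ell$, which forces $|\ell|\le\frac13$ (and this follows from $m\le\ell$ via Lemma~\ref{l:mi-li}, as you note).
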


\begin{proof}
(1) By assumption, $|\si_2^i(\ell)|\ge |m|$. If $|\si_2^j(\ell)|<|m|$
for some $j\ge i$, then, by the Central Strip Lemma, for some $k$ the
leaf $\si_2^k(\ell)$ is vertical inside $\ol{S(m)}$, a contradiction
with the vertical pullbacks $\pm M$ of $m$ being the majors of $\lam$.
Observe that $|m|\le \frac13$ as was explained in the paragraph right before Lemma \ref{l:mi-li}.
Hence for each $r, 0\le r\le i-1$ we have $|\si_2^r(\ell)|\le |m|\le \frac13$ which easily
implies that $|\si_2^r(\ell)|\le |\si_2^{r+1}(\ell)|,$ $r=0, \dots, i-1$.

(2) Since the horizontal pullbacks of $\ell$ cross the vertical edges of $S(m)$,
which are leaves of $\lam$, the vertical pullbacks $\pm L$ of
$\ell$ (which are the edges of $S(\ell)$) must be leaves of $\lam$.
Hence eventual images of $\ell$ do not cross an edge of $S(\ell)$, as desired.
\end{proof}

A few well-known results concerning quadratic laminations with a given minor $m$ are summarized
in Theorem~\ref{t:major}; these results can be found in \cite{thu85}, or can be easily deduced from \cite{thu85}.

\begin{theorem}
  \label{t:major}
If $m$ is a stand alone minor, then there exists a quadratic lamination $\lam$ with minor $m$.
Depending on $m$, the following holds. 

\begin{enumerate}
\item If $m$ is non-periodic, then either
\begin{enumerate}
\item
a quadratic lamination $\lam$ with minor $m$ is unique, or

\item if in addition $m$ is non-degenerate, then there are at most two quadratic laminations $\hlam\subset \lam$ with minor $m$
one of which must be a q-lamination $\hlam$ with finite gaps. 

\end{enumerate}

\item If $m$ is periodic and non-degenerate, then there exists a unique q-lamination $\lam$ such that $m$ is its minor.

\item If $m$ is periodic and degenerate, then there are at most four quadratic laminations with $m$ as a minor,
and there exists a unique q-lamination $\hlam$ whose periodic minor $\hat m$ has $m$ as an endpoint. Moreover,
if $m\ne \ol{0}$ then $\hat m$ is non-degenerate.
\end{enumerate}

In any case, there exists a unique q-lamination $\hlam(m)$ such that, if $m$ is not a periodic point, then
any lamination with minor $m$ contains $\hlam(m)$; moreover, if $m$ is non-degenerate and non-periodic, then
all leaves of $\hlam(m)$ are non-isolated in $\hlam(m)$ and all gaps of $\hlam(m)$ are finite.
In case $(1)(b)$, any leaf of $\lam\sm\hlam(m)$ is eventually mapped to vertical edges of $Q(m)$.
\end{theorem}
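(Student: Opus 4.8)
The plan is to organize the argument around the \emph{canonical q-lamination} $\hlam(m)$ and to derive all listed cases from the structure of its critical set $\widehat C$. First I would recall (or re-derive from \cite{thu85}) the construction of $\hlam(m)$: given a stand alone minor $m$, the conditions SA1--SA4 guarantee that the vertical pullbacks $\pm M$ of $m$ (the edges of $S(m)$) together with their entire backward orbit form a well-defined set of pairwise unlinked chords; taking the closure and adding all points of $\uc$ yields a lamination. The key point is that the grand orbit of $Q(m)$ splits $\ol\disk$ into pieces on which $\si_2$ acts in a controlled way, and Thurston's pullback procedure produces a \emph{q-lamination} precisely because we always pull back minor-like chords to their \emph{vertical} preimages (this is where the Central Strip Lemma, Lemma~\ref{l:Cstrip}, is used: it forbids images of $m$ from re-entering $S(m)$ transversally, so the pullback tree never collides with itself). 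I would then note that $m$ is, by construction, an edge or vertex of $\si_2(\widehat C)$, so $m$ is the minor of $\hlam(m)$.

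Next I would run the case analysis on $\widehat C$. If $m$ is non-periodic, then $\widehat C$ cannot be a periodic Fatou gap, so every gap of $\hlam(m)$ is eventually mapped into the (finite) backward orbit of $\widehat C$ and, since $\widehat C$ itself is then either a critical leaf, a critical quadrilateral, or a finite critical polygon, all gaps are finite. In the first two sub-cases no chord can be inserted without creating a crossing, giving uniqueness, case~(1)(a). In the remaining sub-case $\widehat C$ is a finite polygon with $\ge 6$ sides; here $\si_2(\widehat C)$ has $m$ as an edge and the only lamination properly containing $\hlam(m)$ with the \emph{same} minor is obtained by inserting $Q(m)$ into $\widehat C$ and pulling it back along the backward orbit of $\widehat C$ — this works exactly when the two edges of $\widehat C$ mapping to $m$ are the longest edges, i.e.\ when $m$ is the shortest edge of $\si_2(\widehat C)$, and it requires $m$ non-degenerate. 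That yields at most two laminations and the inclusion $\hlam(m)\subset\lam$, case~(1)(b); moreover any leaf of $\lam\sm\hlam(m)$ lies in the backward orbit of $Q(m)$, hence is eventually mapped to an edge of $Q(m)$, and the vertical edges of $Q(m)$ are the ones that matter because the horizontal ones would cross leaves of $\hlam(m)$. For non-isolation of leaves of $\hlam(m)$ when $m$ is non-degenerate non-periodic: a finite-gap q-lamination has a dense set of leaves in the backward orbit of $\widehat C$, and since $\widehat C$ is not periodic its preimages accumulate everywhere on each leaf, so no leaf is isolated.

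If $m$ is periodic and non-degenerate, then $\widehat C$ is a periodic Fatou gap of some period $p$ mapping two-to-one onto its image, and $m$ is the unique edge of $\si_2(\widehat C)$ of period $p$ (this is the standard fact about periodic minors; it rules out an extra endpoint being periodic, consistent with the earlier remark that no non-degenerate minor has exactly one periodic endpoint). The q-lamination with this $\widehat C$ is unique because inserting $Q(m)$ into $\widehat C$ changes the minor set's status — the result is still a lamination with minor $m$ but it is not a q-lamination — so among q-laminations there is exactly one, case~(2). For the degenerate periodic case, $m=\ol\alpha$ is a periodic point; the associated q-lamination $\hlam$ has a non-degenerate periodic minor $\hat m$ with $\ol\alpha$ as one endpoint (when $\alpha\ne 0$; when $m=\ol0$ the lamination is empty), and the four laminations arise from the four pullback choices for the collapsing quadrilateral sitting over the periodic point, reduced to three when $m=\ol0$ by symmetry. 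Finally, the uniqueness clause ``if $m$ is not a periodic point then any lamination with minor $m$ contains $\hlam(m)$'' follows because the vertical pullbacks $\pm M$ and their backward orbit are \emph{forced}: Lemma~\ref{l:nodrop}(2) (or rather its proof) shows that horizontal pullbacks of minor-like chords behind $m$ would cross the vertical edges of the relevant central strips, which are themselves forced leaves, so every lamination with minor $m$ must contain the entire backward orbit of $Q(m)$, i.e.\ contain $\hlam(m)$.

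The main obstacle I expect is the careful bookkeeping in case~(1)(b): proving that there are \emph{at most} two laminations and identifying exactly which leaves distinguish them requires showing that once $Q(m)$ is pulled back into $\widehat C$, the rest of the lamination is completely determined — i.e.\ no further inserted chord is possible — and that the ``shortest edge'' condition is both necessary and sufficient for this extra lamination to exist as a genuine lamination with minor still equal to $m$. This is where one must invoke the finer structure of $\si_2(\widehat C)$ and the precise statement of the Central Strip Lemma to control the lengths of the edges of $\widehat C$ relative to $|m|$; everything else is a fairly mechanical unwinding of Thurston's pullback construction.
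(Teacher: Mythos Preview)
The paper does not prove Theorem~\ref{t:major} at all: it is stated as a summary of known results, with the sentence preceding it reading ``these results can be found in \cite{thu85}, or can be easily deduced from \cite{thu85}''. So there is no proof in the paper to compare against; your sketch is essentially a reconstruction of Thurston's argument, organized around the critical set $\widehat C$ of the associated q-lamination, and that is exactly the route the authors have in mind (a commented-out passage in the source, not visible in the compiled paper, lays out the same case analysis almost verbatim).

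One point in your write-up is off, though, and it concerns precisely the clause you flagged as the ``main obstacle''. In case~(1)(b) with $\widehat C$ a finite gap of $\ge 6$ sides and $m$ the shortest edge of $\si_2(\widehat C)$, the vertical edges $\pm M$ of $Q(m)$ are already the two longest edges of $\widehat C$, hence already leaves of $\hlam(m)$. What one \emph{adds} to obtain $\lam$ are the \emph{horizontal} edges of $Q(m)$, which are diagonals of $\widehat C$ and therefore do not cross any leaf of $\hlam(m)$; your claim that ``the horizontal ones would cross leaves of $\hlam(m)$'' is backwards. Consequently, leaves of $\lam\setminus\hlam(m)$ are iterated pullbacks of the horizontal edges of $Q(m)$, and the last sentence of the theorem as stated in the paper (``eventually mapped to vertical edges of $Q(m)$'') appears to be a slip for ``horizontal''. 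Your instinct to match the printed statement led you to invent a wrong justification; the correct picture is that the new leaves sit inside the grand orbit of $\widehat C$ as diagonals, mapping eventually to a horizontal edge and then to $m\in\hlam(m)$.
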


We can now define a specific lamination $\lam(m)$ with minor $m$.

\begin{definition}\label{d:lamm}
If $m$ is a non-periodic or non-degenerate stand alone minor, define $\lam(m)$ as one of
the laminations from Theorem~\ref{t:major} as follows:
in case (1)(a) the lamination $\lam(m)$ is the unique quadratic lamination with minor $m$;
in case (1)(b), the lamination $\lam(m)$ is the bigger of the two laminations with minor $m$;
in case (2) it is the unique q-lamination with minor $m$.
In 
any case the central set of $\lam(m)$ is denoted by $C(m)$. Finally, the q-lamination $\hlam$ from
Theorem~\ref{t:major} will be denoted by $\hlam(m)$ and will be called the \emph{q-lamination associated with $m$}.
\end{definition}

This defines $\lam(m)$ except for the case when $m$ is a periodic
singleton (which will be done later). By definition, $m$ is the minor
of $\lam(m)$. Observe that the central set $C(m)$ of a lamination
$\lam(m)$ is either a critical leaf (a diameter), a collapsing
quadrilateral, or an infinite periodic quadratic gap.

In the sequel, by a \emph{minor} we mean a stand alone minor or, which
is the same by Theorem \ref{t:major}, the minor of some (not specified)
quadratic lamination. Minors are also identical to leaves of the
$\qml$. The lamination $\lam(m)$ is called the \emph{minor leaf
lamination} associated with a minor $m$. In order to construct
$\lam(m)$, we will describe the process of taking pullbacks of chords.

\begin{definition}[$m$-pullbacks]\label{d:mp}
Let $m$ be a minor-like chord, let $\ell=ab$ be a chord of $\uc$ that
is not linked with $m$. The $m$-\emph{pullbacks} of $\ell$ are defined
as follows. If $\ell=m$, then the $m$-pullbacks are the major(s) $\pm
M$, the edges of $S(m)$. If $\ell\ne m$ is a point in $\uc$, then the
$m$-pullbacks of $\ell$ are points in $\si_2^{-1}(\ell)$. Otherwise, there
are four points in $\si_2^{-1}(\ell\cap\uc)$, and there are two
possible cases. First, $\ell\subset \ol{D(m)}$ in which case all four
points belong to $S(m)$. Then we define the $m$-pullbacks of $\ell$ as
the horizontal pullbacks of $\ell$. Second, $\ell\subset \ol{\disk}\sm
D(m)$ in which case all four points belong to $\ol{\uc\sm S(m)}$. If
$m$ is non-degenerate or $\ell$ is disjoint from $m$, the $m$-pullbacks
of $\ell$ are defined as the two pullbacks of $\ell$ that do not cross
$M$ or $-M$. In the remaining case $m$ is degenerate and is an endpoint
of $\ell$; then we define the $m$-pullbacks of $\ell$ to be the
pullbacks of $\ell$ that have length $\le \frac14$.
\end{definition}

In the last case in Definition~\ref{d:mp}, if $m\ne \ol{0}$ or if
$m=\ol{0}$ but $\ell\ne \ol{0\frac12}$, there are exactly two
$m$-pullbacks of $\ell$ while if $m=\ol{0}$ (hence $M=\ol{0 \frac 12}$) and
$\ell=M$ then there are four such pullbacks: $\ol{0\frac14},$
$\ol{\frac14\frac12},$ $\ol{\frac12 \frac34}$ and $\ol{\frac34 0}$.

Observe that if (degenerate) $m\ne \ol{0}$ is an endpoint of $\ell$ then the $m$-pullbacks of $\ell$ are horizontal.
Indeed, in that case $M\ne \ol{0\frac12}$ is a diameter of $\disk$ with endpoints $\pm a$.
We may assume that $a$ is in the upper half-plane.
Then $m=\si_2(a)<M$.
If $\ell$ is small, then the $m$-pullbacks of $\ell$ are two short chords coming out of the points $\pm a$
(the other two candidate pullbacks are of length $>\frac14$).
Clearly, both chords are horizontal.
As we continuously increase the length of $\ell$, its pullbacks also continuously increase.
The longest option for $\ell$ is still shorter than a half-circle, hence these chords are $m$-pullbacks of $\ell$.
If at some moment they stop being horizontal, then at this moment the endpoints of these chords not in $M$ must become either $\ol 0$ or $\ol{\frac12}$.
Hence their common image $\ell$ must have an endpoint $\si_2(\ol{0})=\ol{0}$.
However $\ell$ cannot have $\ol{0}$ as an endpoint, a contradiction.

Importantly, there is no way of making $m$-pullbacks depend continuously on $m$.
This is why the definition of $m$-pullbacks may not look very natural.
Observe the following.
If $m$ is a minor, then any chord of the form $\si_2^n(m)$ is an $m$-pullback of $\si_2^{n+1}(m)$ for $n\ge 0$.
Indeed, this statement is non-trivial only for non-degenerate $m$.
In this case $m$-pullbacks are determined by the property that they do not cross the edges of $S(m)$
(by property (4) of minors, iterated images of $m$ never enter $S(m)\sm (M\cup -M)$).
The following theorem complements Theorem \ref{t:major}; recall that in case when
$m$ is non-degenerate, or degenerate and non-periodic, $\lam(m)$ was defined above
(see Theorem~\ref{t:major}).

\begin{theorem}
  \label{t:mpb}
  If $m$ is a non-degenerate or non-periodic minor
  then iterated $m$-pullbacks of iterated $\si_2$-images
  of $m$ are dense in $\lam(m)$.
\end{theorem}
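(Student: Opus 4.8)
The plan is to prove the statement in two steps: (i) every iterated $m$-pullback of every iterated $\si_2$-image of $m$ is in fact a leaf of $\lam(m)$; and (ii) the resulting family $\mathcal F$ of leaves is dense in $\lam(m)$. Throughout, recall from the discussion following Definition~\ref{d:mp} that each $\si_2^k(m)$ is itself an $m$-pullback of $\si_2^{k+1}(m)$, and that by SA2 and SA4 none of the chords produced in the process is linked with $m$ or crosses an edge of $S(m)$, so that ``taking $m$-pullbacks'' remains a well-defined operation along the way.

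For step (i) I would induct on the pullback level. The level-$0$ chords $\si_2^k(m)$ are leaves of $\lam(m)$ since $m$ is its minor. For the inductive step, let $\ell$ be a leaf of $\lam(m)$ not linked with $m$. By backward invariance $\ell$ has a preimage leaf in $\lam(m)$, and since the majors $\pm M$ are leaves of $\lam(m)$ and leaves of a lamination are pairwise unlinked, every preimage leaf of $\ell$ in $\lam(m)$ avoids crossing $\pm M$. Going through the cases of Definition~\ref{d:mp}, one checks that the preimages of $\ell$ crossing neither $M$ nor $-M$ are exactly the $m$-pullbacks of $\ell$ (the length bookkeeping here is benign: each complementary arc of $S(m)\cap\uc$ has length $|M|$, so every $m$-pullback has length $\le|M|$), and, conversely, that each $m$-pullback of $\ell$ is again a leaf of $\lam(m)$: when $m$ is non-periodic this follows because $\lam(m)$ is the maximal lamination with minor $m$ (adjoining an $m$-pullback of an existing leaf, its sibling, its further $m$-pullbacks, and limits again yields a quadratic lamination with minor $m$, hence one contained in $\lam(m)$); when $m$ is periodic and non-degenerate one reads this off the explicit structure of the q-lamination $\lam(m)=\hlam(m)$ in Theorem~\ref{t:major}. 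This closes the induction and, at the same time, identifies $\mathcal F$ with the set of those leaves of $\lam(m)$ that are iterated $\si_2$-preimages of iterated $\si_2$-images of $m$.

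For step (ii), put $\Lambda=\ol{\mathcal F}$ (closure in the space of chords); by step (i), $\Lambda\subseteq\lam(m)$, so it remains to show $\Lambda\supseteq\lam(m)$. Write $\lam(m)=\hlam(m)\cup(\lam(m)\sm\hlam(m))$. By Theorem~\ref{t:major}(1)(b), every leaf of $\lam(m)\sm\hlam(m)$ is eventually mapped to a vertical edge of $Q(m)$, i.e.\ to $M$ or $-M$, and since $\pm M$ are the $m$-pullbacks of $m$, all these leaves already lie in $\mathcal F$. Hence it suffices to prove $\hlam(m)\subseteq\Lambda$. The endpoints of leaves of $\mathcal F$ are precisely the angles eventually mapped by $\si_2$ onto an endpoint of $m$, and this set is dense in $\uc$; combining this with the facts supplied by Theorem~\ref{t:major} that (for non-degenerate non-periodic $m$) $\hlam(m)$ has no isolated leaf and all of its gaps are finite, together with the No Wandering Triangle Theorem~\ref{t:noWT} (which makes every gap of $\hlam(m)$ eventually periodic), one argues that $\mathcal F$ cannot miss a neighbourhood of any leaf of $\hlam(m)$, so $\ol{\mathcal F}\supseteq\hlam(m)$. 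The periodic non-degenerate case and the non-periodic degenerate case are treated the same way, via parts (2) and (1)(a) of Theorem~\ref{t:major}; in the periodic case $\hlam(m)$ is of ``finite type'' and in fact each of its leaves is eventually mapped onto $m$, so there $\mathcal F$ already exhausts the non-degenerate leaves.

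The step I expect to be the main obstacle is the density assertion for $\hlam(m)$ in step (ii): although this is morally just Thurston's generation of $\lam(m)$ by pullbacks of its minor, one must genuinely rule out that $\ol{\mathcal F}$ is some proper sublamination of $\hlam(m)$ still containing the orbit of $m$. The cleanest route I see uses expansiveness of $\si_2$ together with the absence of wandering polygons: each leaf of $\hlam(m)$ is either an edge of an eventually periodic finite gap or a non-isolated ``limit'' leaf, and in either situation one exhibits iterated $m$-pullbacks of iterated images of $m$ accumulating onto it, using that their endpoints form a dense set of angles and that no leaf of $\hlam(m)$ can be shielded from them by a gap. A secondary, bookkeeping-heavy point is verifying in step (i) that the case division of Definition~\ref{d:mp} --- especially its treatment of chords lying strictly behind $m$ and the degenerate subcase --- matches exactly with ``preimage leaves of $\lam(m)$ not crossing $\pm M$''.
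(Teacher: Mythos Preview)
The paper does not prove this theorem at all: it is one of the results in Section~1 that is stated without proof and attributed to Thurston (the section opens by saying that all statements trace back to \cite{thu85} and that ``some proofs are omitted''). So there is no paper proof to compare against; what remains is whether your outline would actually close.

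Your step~(i) is fine in spirit, and your observation that leaves of $\lam(m)\sm\hlam(m)$ already lie in $\mathcal F$ is correct. The real gap is in step~(ii), and you flag it yourself: the passage from ``endpoints of leaves of $\mathcal F$ are dense in $\uc$'' plus ``$\hlam(m)$ has no isolated leaf and only finite gaps'' to ``$\ol{\mathcal F}\supseteq\hlam(m)$'' is not an argument. Density of endpoints does not by itself force density of leaves; one can have a proper sublamination whose vertex set is still dense. The appeal to No Wandering Triangles and ``no leaf can be shielded by a gap'' is suggestive but does not constitute a proof.

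A cleaner and more robust route, which avoids the shielding heuristics entirely, is to observe that $\Lambda=\ol{\mathcal F}$ is itself a quadratic invariant lamination: it is closed, forward invariant (images of $m$-pullbacks are again in $\mathcal F$), backward invariant (each leaf has an $m$-pullback in $\mathcal F$), and sibling invariant (siblings of $m$-pullbacks are $m$-pullbacks). Since $\pm M\in\mathcal F$, the minor of $\Lambda$ is $m$. Now invoke the classification of Theorem~\ref{t:major}: for non-periodic $m$ there are at most two laminations with minor $m$, namely $\hlam(m)\subset\lam(m)$, and since $\Lambda$ contains the vertical edges of $Q(m)$ it cannot be $\hlam(m)$ in case (1)(b), hence $\Lambda=\lam(m)$; for periodic non-degenerate $m$ the q-lamination $\lam(m)$ is the unique one, so again $\Lambda=\lam(m)$. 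This replaces your density-of-angles argument by a finite case analysis grounded in Theorem~\ref{t:major}, which is exactly the tool the paper makes available.
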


In fact, Theorem \ref{t:mpb} inspires the \emph{definition} of $\lam(m)$ in the only remaining case
when $m$ is a periodic singleton; in that case we define $\lam(m)$ as the closure of the family of all
iterated $m$-pullbacks of $M$ where $M$ is the diameter mapped to $m$ by $\si_2$.

\subsection{Classification of dynamic gaps}
The key tool that allowed Thurston to succeed in establishing a
complete classification of gaps of quadratic laminations was Theorem
\ref{t:noWT} (No Wandering Triangles Theorem). Let $\Delta$ be a
triangle with vertices in $\uc$. It is said to be \emph{wandering} if
all $\si_2^n(\Delta)$ have non-empty disjoint interiors for $n\ge 0$.

\begin{theorem}
\label{t:noWT}
Wandering triangles do not exist.
\end{theorem}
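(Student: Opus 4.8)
The plan is to argue by contradiction, following Thurston. Suppose $\Delta$ is a wandering triangle and put $\Delta_n=\si_2^n(\Delta)$, so that the $\Delta_n$ are non-degenerate triangles inscribed in $\uc$ with pairwise disjoint interiors. First I would record two reductions. No edge of any $\Delta_n$ is a critical chord: the two endpoints of a critical chord are identified by $\si_2$, so its presence would make $\Delta_{n+1}$ degenerate. Consequently $\si_2$ is injective on the three vertices of each $\Delta_n$ and carries $\partial\Delta_n$ onto $\partial\Delta_{n+1}$. Moreover, since the interiors of the $\Delta_n$ are pairwise disjoint and no edge is a diameter, the center of $\disk$ lies in the interior of at most one $\Delta_n$ and on the boundary of none; after discarding that single index we may assume that \emph{every} $\Delta_n$ has the center in its exterior, equivalently that the three vertices of each $\Delta_n$ lie on a circle arc of length $<\tfrac12$.

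Next I would set up coordinates on the orbit. Write the three complementary arcs of $\Delta_n$ with lengths $a_n\le b_n\le c_n$, where $a_n+b_n+c_n=1$; since the center is exterior to $\Delta_n$ one has $c_n>\tfrac12$, so the two shortest edges of $\Delta_n$ have lengths exactly $a_n$ and $b_n$ with $a_n+b_n=1-c_n<\tfrac12$, while the longest edge has length $1-c_n$ as well when $c_n>\tfrac12$. Using the elementary observations (a)--(d) preceding Lemma \ref{l:Cstrip} I would then describe the transition $(a_n,b_n,c_n)\mapsto(a_{n+1},b_{n+1},c_{n+1})$: the two short arcs are the ones that are doubled in length (being $<\tfrac14$, or otherwise controlled by the central-strip dichotomy of Lemma \ref{l:Cstrip}), and the long arc is the one that ``wraps''. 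Crucially, passing to the quadratic lamination $\lam$ generated by the grand orbit of $\Delta$, its majors $\pm M$ are leaves in that grand orbit, so no iterated image of an edge of $\Delta_n$ can ever cross an edge of the central strip $S(M)$; Lemma \ref{l:Cstrip} then says no such image can become a vertical chord strictly inside $S(M)$, which rigidly restricts how a short edge may be produced from a long one.

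The combinatorial heart of the proof is the final step. Because a short chord strictly expands under $\si_2$ (its length is multiplied by $2$ while it stays below $\tfrac13$, by observation (b)), any edge of $\Delta_n$ shorter than a fixed threshold $\delta$ must have been, a bounded number of steps earlier, an edge of some $\Delta_m$ of length comparable to $\delta$; but the chords of length $\ge\delta$ occurring among the pairwise-disjoint family $\{\Delta_n\}$ can occupy only finitely many positions compatible with the central-strip constraint above, so only finitely many $\Delta_n$ can have \emph{both} short edges below $\delta$. Letting $\delta\to0$ forces $a_n+b_n$ to be bounded below along the entire orbit, i.e. $c_n$ bounded away from $1$; feeding this back into the transition analysis of the previous paragraph produces either a forced crossing of an edge of $S(M)$ (contradicting Lemma \ref{l:Cstrip}) or a forced coincidence $\Delta_{n+k}=\Delta_n$ (contradicting wandering). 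Either way one reaches a contradiction, so wandering triangles do not exist. I expect this last step---converting the disjointness of the $\Delta_n$ into a genuine finiteness statement and then into a contradiction---to be the main obstacle; the first two paragraphs are essentially bookkeeping built on the Central Strip Lemma, whereas this is where Thurston's original insight lies.
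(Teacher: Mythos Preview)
The paper does not prove Theorem~\ref{t:noWT}. Section~1 opens with the disclaimer that the results there trace back to~\cite{thu85} and that ``some proofs are omitted''; this theorem is one of those, stated and attributed to Thurston with no argument supplied. There is therefore no in-paper proof to compare your attempt against.

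On the attempt itself: your first two paragraphs---reducing to triangles with the center of $\disk$ in their exterior, and tracking the arc-length triple $(a_n,b_n,c_n)$ under the transition $\{a_n,b_n,c_n\}\mapsto\{2a_n,\,2b_n,\,2c_n-1\}$---are the standard opening moves of Thurston's proof and are correct. The third paragraph, however, is not an argument but a wish, and it contains two genuine gaps. First, you appeal to ``the quadratic lamination $\lam$ generated by the grand orbit of $\Delta$'' and its majors in order to invoke Lemma~\ref{l:Cstrip}; but Theorem~\ref{t:noWT} concerns a bare triangle, and while the \emph{forward} images of its edges are unlinked by the disjoint-interiors hypothesis, you have not verified backward invariance or the sibling property, so you do not yet have an invariant lamination with well-defined majors to which the Central Strip Lemma applies. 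Thurston's own argument does not route through an ambient lamination at this point. Second, and decisively, the assertion that ``chords of length $\ge\delta$ occurring among the pairwise-disjoint family $\{\Delta_n\}$ can occupy only finitely many positions compatible with the central-strip constraint'' is false as stated: pairwise-disjoint triangles can have infinitely many edges of length $\ge\delta$ (a sequence of thin triangles accumulating on a fixed chord already does this), and nothing you have written excludes it. The actual mechanism in Thurston's proof is a careful analysis of the subsequence of ``record'' times at which the shortest arc attains a new minimum, showing that the triangles at consecutive record times are forced into positions whose edges must eventually link; this is precisely the step you yourself flag as ``the main obstacle'' and do not carry out.
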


The first step in the classification of all gaps is the following
corollary.

\begin{cor}
  \label{t:finOcri}
Let $G$ be a gap of a quadratic lamination $\lam$. Then an eventual image of $G$
either contains a diameter or is periodic and finite.
\end{cor}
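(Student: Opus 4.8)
The plan is to combine the No Wandering Triangles Theorem (Theorem \ref{t:noWT}) with a dichotomy between finite and infinite gaps. First I would dispose of the case of a finite gap $G$. If $G$ is finite and some eventual image of $G$ contains a diameter, we are done; otherwise no image of $G$ is semi-critical, so each $\si_2^n(G)$ is again a finite gap (the convex hull of $\si_2(\si_2^n(G)\cap\uc)$, as recorded in the discussion of gap invariance before the properties (1)--(3)), with the same number $k\ge 3$ of vertices. Since the vertices of $G$ partition $\uc$ into $k$ arcs, and $\si_2^n(G)$ always has $k$ vertices, the diameters of these gaps are bounded below, so there can be only finitely many distinct gaps in the forward orbit; hence $\si_2^N(G)$ is periodic for some $N$, and it is finite, which is the desired conclusion. (To extract an honest triangle to feed into Theorem \ref{t:noWT} one picks three consecutive vertices of $G$; if the orbit of $G$ had infinitely many distinct members with pairwise disjoint interiors, then this triangle would be wandering, contradicting Theorem \ref{t:noWT} — this is the cleaner way to phrase the finiteness argument.)

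Next I would handle an infinite gap $G$. The idea is that an infinite gap cannot "wander" forever because the total length of $\uc$ is finite. If $G$ is infinite and no image of $G$ ever contains a diameter, then, by gap invariance, for every $n$ the map $\si_2$ restricted to the boundary of $\si_2^n(G)$ extends to an orientation-preserving covering map of degree $1$ of $\uc$, i.e. $\si_2$ is injective on $\si_2^n(G)\cap\uc$; in particular all the gaps $\si_2^n(G)$ have disjoint interiors only if the orbit is infinite, and each $\si_2^n(G)$ has a well-defined "length" $|\si_2^n(G)\cap\uc_{\mathrm{complement}}|$, or more simply, the arcs $\uc\setminus(\si_2^n(G)\cap\uc)$ are shrinking in a controlled way. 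The key observation is that, with no critical chord ever appearing, $\si_2$ maps $\si_2^n(G)$ to $\si_2^{n+1}(G)$ so that the complementary arcs expand under $\si_2$; since their total measure is at most $1$, the orbit of $G$ cannot consist of infinitely many pairwise-interior-disjoint gaps. Therefore two images of $G$ share an interior point, which forces $\si_2^i(G)=\si_2^j(G)$ for some $i<j$, i.e. an image of $G$ is periodic; and a periodic infinite gap, under iteration of $\si_2$ over a period, must eventually produce a critical chord (a two-to-one return map on an infinite gap has a critical edge), so in fact this case collapses into "an eventual image contains a diameter." I would need to state carefully which of these two alternative outcomes one lands in, but either way the conclusion of the corollary holds.

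The main obstacle I expect is the infinite-gap case, specifically making the "expansion of complementary arcs forces eventual periodicity" argument airtight: one must rule out that the complementary arcs shrink to zero without any two gaps coinciding, and one must correctly invoke gap invariance to know that $\si_2$ is a covering of the appropriate degree on each boundary. The finite-gap case is essentially a direct corollary of Theorem \ref{t:noWT} together with a counting argument on the number of vertices, so I would expect it to be short; the work is in the infinite case, and the cleanest route there is probably to first prove the purely combinatorial statement that a gap with no semi-critical image has an eventually periodic orbit (using that all its images have the same "type" and bounded complexity), and then observe separately that a periodic infinite gap has a critical edge in its orbit.
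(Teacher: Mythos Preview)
The paper does not give a proof of this corollary; it is stated as an immediate consequence of Thurston's No Wandering Triangles Theorem (Theorem~\ref{t:noWT}) with details deferred to \cite{thu85}. So there is no in-paper argument to compare against, and I evaluate your proposal on its own terms.

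Your treatment of finite gaps is correct and standard: pick any three vertices of $G$, observe that if no forward image of $G$ contains a diameter then $\si_2$ is injective on each $\si_2^n(G)\cap\uc$, so the iterated images of the triangle are non-degenerate and sit inside the gaps $\si_2^n(G)$; distinct gaps of $\lam$ have disjoint interiors, so Theorem~\ref{t:noWT} forces a coincidence $\si_2^i(G)=\si_2^j(G)$, hence periodicity.

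The infinite case is where your proposal has a genuine gap. Two points. First, your ``expansion of complementary arcs'' route to eventual periodicity is both unnecessary and, as you suspect, hard to make precise: holes do not simply double under $\si_2$, because the antipodal arc $-H$ may contain vertices of $G$, and the gaps $\si_2^n(G)$ can overlap on $\uc$ even when their interiors in $\disk$ are disjoint. In fact the very same triangle argument works verbatim for infinite $G$ (just pick any three points of $G\cap\uc$), so you get eventual periodicity for free from Theorem~\ref{t:noWT}. Second, and this is the real issue, once you arrive at an infinite periodic gap $G'$ whose cycle contains no diameter, you must derive a contradiction. Your parenthetical ``a two-to-one return map on an infinite gap has a critical edge'' is circular: the whole point is to show that the first return \emph{cannot} have degree~$1$, and this does not drop out of Theorem~\ref{t:noWT}. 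One has to argue, for instance, that a degree-$1$ first return would permute the infinitely many edges of $G'$, and then analyze edge lengths under the tent map $\psi(x)=\min(2x,1-2x)$ to reach a contradiction (short edges grow by a factor $2^p$ each period, which is incompatible with the edge-length multiset being invariant and summable). In the paper this fact appears only \emph{after} the corollary (see the sentence following Proposition~\ref{p:cov-prop}), so a self-contained proof here must supply it.
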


Semi-critical gaps are classified as follows:
\begin{itemize}
  \item strictly preperiodic critical finite gaps with more than 4 edges;
  \item \emph{collapsing quadrilaterals}, i.e., quadrilaterals that are mapped to non-degenerate leaves;
  \item \emph{collapsing triangles}, i.e., triangles with a critical edge;
  \item \emph{caterpillar gaps}, i.e., periodic gaps with a critical edge.
  \item \emph{Siegel gaps}, i.e., infinite periodic gaps $G$ such that $G\cap \uc$ is a Cantor set, $\si_2^n$ maps $G$
      onto itself for some $n$, and $\si_2^n$ restricted to the boundary of $G$ is semi-conjugate to an irrational rotation of the circle under the map that collapses all edges of $G$ to points.
\end{itemize}
All edges of a caterpillar gap are eventually mapped to the critical edge.
Any caterpillar gap has countably many edges and countably many vertices.

Let $A\subset\uc$ be a compact set. Denote by $\si_d:\uc\to\uc$ the
$d$-tupling map that takes $z$ to $z^d$ for any $d\ge 2$. We say that
$\si_d:A\to\si_d(A)$ has \emph{degree $k$ covering property} if there
is a degree $k$ orientation preserving covering $f:\uc\to\uc$ such that
$\si_d|_A=f|_A$ and such $k$ is minimal.

\begin{prop}
  \label{p:cov-prop}
Consider a gap $G$ of a quadratic lamination $\lam$ such that no edge of $G$ is a critical leaf.
Then the map $\si_2:G\cap\uc\to\si_2(G\cap\uc)$ has degree $k$ covering property, where $k=1$ or $2$.
\end{prop}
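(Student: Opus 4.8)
The plan is to analyze the boundary map $\si_2|_{G\cap\uc}$ using the structure theory already developed, particularly Corollary~\ref{t:finOcri} and the gap invariance property stated in the introduction. First I would recall the dichotomy from gap invariance: for a gap $G$ of a quadratic lamination, either $G$ has a critical edge, or $\si_2|_{G\cap\uc}$ extends to $\uc$ as an orientation-preserving covering map $\hat\tau$ with $G\cap\uc$ being the full $\hat\tau$-preimage of $\si_2(G\cap\uc)$. By hypothesis no edge of $G$ is a critical leaf, so we are in the second case and $\si_2|_{G\cap\uc}$ automatically has the degree-$k$ covering property for some $k\ge 1$; the whole content is that $k\le 2$.

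The key step is to bound the degree. Since $\si_2:\uc\to\uc$ has global degree $2$, any orientation-preserving covering $f:\uc\to\uc$ agreeing with $\si_2$ on the infinite compact set $G\cap\uc$ can have degree at most $2$: if $\deg f = k$, then $f$ and $\si_2$ agree on $G\cap\uc$, and I would argue that $k\le 2$ by a counting/monotonicity argument. Concretely, order the points of $G\cap\uc$ cyclically; as one traverses $\uc$ once, the image under $\si_2$ wraps around exactly twice, so the ``rotation number'' contribution forces $k\le 2$. More carefully: pick any gap edge or any pair of consecutive points $a,b$ of $G\cap\uc$ bounding a complementary arc $I$ disjoint from $G\cap\uc$; the covering $f$ must map the closed arc from $a$ to $b$ (going the short way through $G$'s side) onto an arc, and piecing these together over all of $G\cap\uc$ must give total winding $\deg f$. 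Since $\si_2$ has winding $2$ and $f$ agrees with $\si_2$ at all the $G\cap\uc$ points while being monotone, the total winding of $f$ cannot exceed that of $\si_2$ plus the contributions hidden in complementary arcs — but each complementary arc of $G\cap\uc$ in $\uc$ is a gap-side arc, and $f$ being a covering extending $\si_2|_{G\cap\uc}$ restricts the possible winding there. This yields $k\le 2$.

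Alternatively, and perhaps more cleanly, I would invoke Corollary~\ref{t:finOcri}: an eventual image $G' = \si_2^n(G)$ either contains a diameter or is periodic and finite. If $G'$ contains a diameter (a critical chord), then $\si_2|_{G'\cap\uc}$ is ``two-to-one'' in the relevant sense and has degree-$2$ covering property; since $G$ has no critical edge, the covering degree cannot drop when passing to preimages in a way that exceeds $2$, so $k\le 2$ for $G$ as well. If $G'$ is periodic and finite, then the first-return map on $G'$ is a cyclic permutation (as $G'$ is finite and invariant under an iterate, by the structure of finite gaps), which is degree $1$; again pulling back, the covering degree of $\si_2|_{G\cap\uc}$ is at most $2$. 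The main obstacle I expect is the bookkeeping in the pullback step: relating the covering degree of $\si_2|_{G\cap\uc}$ to that of $\si_2|_{\si_2(G)\cap\uc}$, one must check that passing to the image multiplies the degree by the local degree of $\si_2$ on $G$, which is at most $2$ precisely because $G$ has no critical edge (so $\si_2$ is at most two-to-one on $G\cap\uc$, with the ``at most'' being exactly the point). Making this local-to-global degree multiplication rigorous, using the full-preimage clause of gap invariance, is the technical heart of the argument.
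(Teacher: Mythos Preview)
The paper does not actually prove this proposition: it is one of the results attributed to \cite{thu85} with the proof omitted. So there is no ``paper's own proof'' to compare against; I can only comment on the soundness of your plan.

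Your first route is essentially correct but badly overcomplicated, and it contains two false statements along the way. First, you write ``the infinite compact set $G\cap\uc$'', but gaps can be finite, and the proposition must cover that case too. Second, you assert that ``any orientation-preserving covering $f:\uc\to\uc$ agreeing with $\si_2$ on $G\cap\uc$ can have degree at most $2$''. That is not true in general (already for finite $G\cap\uc$ one can cook up agreements with arbitrarily high degree), and more importantly it is not what the definition asks for: the degree $k$ covering property takes the \emph{minimal} $k$ over all such $f$. Once you read the definition this way, the whole winding-number discussion collapses to one line: $\si_2$ itself is an orientation-preserving covering of degree $2$ that trivially agrees with $\si_2$ on $G\cap\uc$, hence the minimal $k$ is at most $2$; and $k\ge 1$ is automatic. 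If instead you intend the definition to carry the full-preimage clause from gap invariance (the paper's wording is a bit loose here), then the equally short argument is: gap invariance produces $\hat\tau$, and since $\si_2$ is globally two-to-one, any point of $\si_2(G\cap\uc)$ has at most two $\hat\tau$-preimages in $G\cap\uc$, forcing $\deg\hat\tau\le 2$. Either way, no monotonicity bookkeeping over complementary arcs is needed.

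Your second route, via Corollary~\ref{t:finOcri} and pulling back covering degrees along the forward orbit of $G$, is a genuine detour with a real obstacle you yourself flag: nothing prevents an intermediate image $\si_2^j(G)$ from acquiring a critical edge, at which point the covering property fails there and your inductive chain breaks. Even when it does not break, relating the covering degree of $\si_2|_{G\cap\uc}$ to that of $\si_2|_{\si_2(G)\cap\uc}$ is extra work that the one-line argument above makes unnecessary. Drop this approach.
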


A bijection from a finite subset $A$ of $\uc$ to itself is a \emph{combinatorial rotation} if it preserves the cyclic order of points.
Thus, a combinatorial rotation $f:A\to A$ is a map
which extends to an orientation preserving homeomorphism $g:\uc\to\uc$, topologically conjugate to a Euclidean rotation.
A gap $G$ of a quadratic lamination $\lam$ is \emph{periodic} if
$\si_2^p(G)=G$ for some $p>0$;
the smallest such $p$ is the \emph{period} of $G$.
If $G$ is of period $p$, then
$\si_2^p$ restricted to $G\cap\uc$ is the \emph{first return map} of $G$.
By Proposition \ref{p:cov-prop} the first return map of a finite periodic gap is a combinatorial rotation.
Moreover, if $\lam$ has no critical leaves then the first return map of an infinite periodic gap $G$
has the degree 2 covering property and $G\cap \uc$ is a Cantor set.

\begin{lemma}
  \label{l:1stret}
  Let $G$ be a periodic gap of a quadratic lamination $\lam$, and $f:G\cap\uc\to G\cap\uc$ its first return map.
\begin{enumerate}
  \item If $G$ is finite, then $f$ is a transitive combinatorial rotation.
  In particular, for any $a$, $b\in G\cap\uc$ such that $ab$ is not an edge of $G$, the chord $f^k(ab)$ crosses $ab$ for some $k>0$.
  \item If $a\ne b\in \uc\cap G$ and neither $a$ nor $b$ eventually maps to $\ol 0$,
   then $f^k(ab)$ is vertical for some $k\ge 0$.
   This is true, e.g., if the interior of $G$ contains the center of $\disk$, the lamination $\lam$ is non-empty,
   and $a$, $b$ are arbitrary points in $G\cap\uc$.
\end{enumerate}
\end{lemma}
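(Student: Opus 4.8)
\emph{Part (1).} By Proposition \ref{p:cov-prop} the first return map $f$ of the finite periodic gap $G$ is a combinatorial rotation, so if we enumerate $G\cap\uc=\{x_0,\dots,x_{d-1}\}$ in positive circular order we have $f(x_i)=x_{i+s}$ for a fixed residue $s$ modulo $d$, and $f$ is transitive if and only if $\gcd(s,d)=1$. Transitivity is part of Thurston's classification of periodic gaps \cite{thu85}; alternatively it follows by contradiction: if $\gcd(s,d)=t\ge 2$, then $P=\ch(\{x_0,x_t,x_{2t},\dots\})$ is a proper $f$-invariant subpolygon of $G$ whose $d/t$ edges are cyclically permuted by $f$, the circular arc cut off by each such edge on the side away from $P$ contains at least one vertex of $G$ but no vertex of $P$, and iterating the same reduction inside one of these arcs leads to a periodic triangle with identity first return map, which one excludes by a pull\-back argument (a sibling of a $\si_2$-preimage of such a triangle has an edge linked with an edge of the triangle, contradicting that both are leaves of $\lam$). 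Granting transitivity, the ``in particular'' is immediate: if $ab$ is not an edge of $G$ then $a=x_i$, $b=x_j$ with $2\le j-i\le d-2$ modulo $d$, and choosing $k$ with $ks\equiv 1\pmod d$ the four points occur in the circular order $x_i,x_{i+1},x_j,x_{j+1}$, so $f^k(ab)=x_{i+1}x_{j+1}$ crosses $ab$.

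\emph{Part (2).} The geometric core is an elementary observation about $\si_2$: call a non-degenerate chord $\ell$ that is not a diameter \emph{horizontal} when $\ol 0$ and $\ol{\frac12}$ both lie on the longer arc bounded by $\ell$, equivalently when $H(\ell)$ contains neither of them; then $H(\ell)$ lies in one of the two half-circles determined by $\{\ol 0,\ol{\frac12}\}$, and since $\si_2$ maps each such closed half-circle homeomorphically onto $\uc$ one checks that if $\ell$ is horizontal with $|\ell|\ge\frac14$ then $\si_2(\ell)$ is vertical, while if $\ell$ is horizontal with $|\ell|<\frac14$ then either $\si_2(\ell)$ is vertical or $\si_2(\ell)$ is again horizontal of length $2|\ell|$. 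Hence a chord cannot remain horizontal under all iterates of $\si_2$ (its length would keep doubling), so any chord both of whose endpoints avoid $\ol 0$, and hence $\ol{\frac12}$, under every iterate of $\si_2$ becomes vertical after finitely many applications of $\si_2$.

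To use this for the first return map, extend $f$ to an orientation preserving covering $\hat f$ of $\uc$ of degree $k\in\{1,2\}$ with $G\cap\uc=\hat f^{-1}(G\cap\uc)$, as provided by Proposition \ref{p:cov-prop} and gap invariance; note that $f^k(ab)=\si_2^{pk}(ab)$, where $p$ is the period of $G$, and that the hypothesis on $a,b$ forces no $f^n(a)$, $f^n(b)$ to equal $\ol 0$ or $\ol{\frac12}$, so all $f^n(ab)$ are non-degenerate and not diameters. If $G$ is finite we are in the degree $1$ case and argue as in Part (1): in the circular coordinate conjugating $f$ to the rotation $x_i\mapsto x_{i+s}$ with $\gcd(s,d)=1$, the points $\ol 0$ and $\ol{\frac12}$ fall into two distinct slots between consecutive vertices of $G$, and the same counting argument as above produces $n$ for which $\ol 0$ and $\ol{\frac12}$ lie on opposite arcs bounded by $f^n(ab)$. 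If $G$ is infinite with $k=1$ (Siegel case), $\hat f$ is conjugate to an irrational rotation and equidistribution of the orbit of a conjugate coordinate of $a$ lets one place the arc from $f^n(a)$ to $f^n(b)$ so that it contains $\ol 0$ but not $\ol{\frac12}$. If $G$ is infinite with $k=2$, $\hat f$ is conjugate to $\si_2$, $G\cap\uc$ is carried to a $\si_2$-fully invariant set, and transporting the displayed observation through the conjugacy verticalizes $f^n(ab)$. Finally, in the situation of the last sentence of the lemma, $\Int G$ contains the center of $\disk$ and $\lam\ne\0$, so the majors $\pm M$ are edges of $G$, and standard properties of minors show that no vertex of $G$ is preperiodic to $\ol 0$, which removes the hypothesis on $a,b$.

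I expect the main obstacle to be precisely this last passage: iterating $\si_2$ naively yields vertical chords sitting in the other gaps of the cycle of $G$, so the argument has to be run intrinsically for $\hat f$, and that is where the trichotomy (conjugacy with $\si_2$ in the degree $2$ case, equidistribution in the Siegel case, combinatorial counting in the finite case) does the real work; the verification that the hypothesis ``no endpoint is preperiodic to $\ol 0$'' is automatic when $\Int G$ contains the center is the other point that needs care.
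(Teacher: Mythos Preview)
For Part~(1) both you and the paper essentially defer to Thurston; your extra sketch of transitivity is fine, though the ``pullback argument'' excluding identity-return triangles is only gestured at.

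Your Part~(2) takes a different route from the paper. The paper runs one uniform itinerary argument: assuming every $f^k(ab)$ is horizontal, the points $a$ and $b$ share the same $\{U,L\}$-itinerary with respect to the partition of $\uc$ by $\ol{0\frac12}$, and since distinct points of $\uc$ never hitting $\ol 0$ have distinct itineraries, this forces $a=b$. Your length-doubling observation for horizontal chords under $\si_2$ is essentially the same fact stated dynamically, and it cleanly shows that some $\si_2^n(ab)$ is vertical. You then flag that this $n$ need not be a multiple of the period $p$, and propose to close that gap by a trichotomy on the gap type.

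Two of your three cases are incomplete. In the degree-$2$ infinite case you conjugate $\hat f$ to $\si_2$ and ``transport the displayed observation through the conjugacy,'' but the conjugacy $h$ does not carry the pair $\{\ol 0,\ol{\frac12}\}$ to itself: what you actually obtain is that some $\si_2^n(h(a)h(b))$ separates $\ol 0$ from $\ol{\frac12}$, which says nothing about whether $\hat f^n(ab)$ separates $\ol 0$ from $\ol{\frac12}$. A correct argument here must be intrinsic to $\hat f$ and the original partition --- for instance an itinerary argument run with the two $\hat f$-preimages of a point in place of the two $\si_2$-preimages. The Siegel sketch is also not quite enough: a rigidly rotating arc can be positioned to separate $\ol 0$ from $\ol{\frac12}$ only if those two points collapse to \emph{distinct} points under the edge-collapsing semiconjugacy of $G$; this holds because the critical edge of the Siegel gap separates them, but that has to be said.

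For the final ``e.g.'' clause the paper argues directly that $\ol 0\notin G$: in the finite case a fixed vertex would contradict transitive rotation, and in the infinite degree-$2$ case density of $\bigcup_{n\ge 0}\si_2^{-n}(\ol 0)$ in both $\uc$ and $G\cap\uc$ would force $G=\ol\disk$ and $\lam$ empty. Your appeal to ``standard properties of minors'' (via the majors $\pm M$ being edges of $G$) is vaguer and does not obviously rule out $\ol 0$ as a vertex of $G$.
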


\begin{proof}
The only claim that is not explicitly contained in \cite{thu85} is the last one.
Assume, by way of contradiction, that $f^k(ab)$ is horizontal for all $k\ge 0$.
Then, for every $k$, either both $f^k(a)$ and $f^k(b)$ are in the open upper half of $\disk$,
 or both in the open lower half of $\disk$.
Suppose that a point $x\in\uc$ never maps to $\ol 0$.
Define the \emph{address} of $x$ as $U$ if $x$ is above $\ol{0\frac 12}$ and as $L$ otherwise.
(The symbols $U$ and $L$ come from ``Upper'' and ``Lower'').
The \emph{itinerary} of $x$ is an infinite word in the alphabet $\{U,L\}$ consisting of addresses
of all $f^k(x)$ for $k\ge 0$. Similarly, we can define \emph{finite
itineraries} of length $N$ if, instead of all $k\ge 0$, we take all $k$
such that $0\le  k< N$.
It is easy to see that the locus of points with a given finite itinerary is 
an arc in $\uc$.
Moreover, this arc has length $2^{-N}$, where $N$ is the length of the
itinerary. It follows that every infinite itinerary defines at most one
point. In particular, since by the assumption $a$ and $b$ have the same itinerary, we
conclude that $a=b$, a contradiction.

If $G$ contains the center of $\disk$ in its interior, then $\lam$ does not have critical leaves.
Hence $\si_2$ has a degree $k$ covering property on $G$, with $k=1$ or $2$.
We claim that then $\ol{0}\notin G$.
Indeed, suppose otherwise. Then it is easy to see that $G$ is invariant and, hence, $f=\si_2$. Now,
if $G$ is finite, then $f$ fixes $\ol 0$, hence cannot act as a transitive combinatorial rotation.
If $G$ is infinite, then the fact that $\lam$ has no
critical leaves implies that $f$ has degree 2 covering property on $G$.
Using the density of $\bigcup_{n\ge 0} \si_2^{-n}(\ol 0)$ in both $\uc$ and $G\cap \uc$,
 we conclude that $G=\ol{\disk}$ and $\lam$ is the empty lamination, a contradiction.
 Hence we may assume that $\ol{0}$
(and, therefore $\ol{\frac12}$) do not belong to $G$.
Since $G$ is
periodic, the points $\ol{0}$ and $\ol{\frac12}$ do not belong to iterated $\si_2$-images of $G$ either.
This implies that if $a\ne b\in G\cap\uc$ then,
by the first paragraph, $f^k(ab)$ is vertical for some $k\ge 0$.
\end{proof}

\subsection{Classification of parameter gaps}
\label{ss:qml-gaps}
Thurston classified all gaps of $\qml$ (see Theorem II.6.11 of \cite{thu85}); we outline this classification below.

Suppose first that $G$ is a finite gap of $\qml$.
Then $G$ is strictly preperiodic under $\si_2$.
Moreover, it is the $\si_2$-image of a finite central set $C$ in a quadratic lamination $\lam$.
The gap $C$ has 6 edges or more.
Conversely, if a quadratic lamination $\lam$ has a finite central gap $C$ with 6 or more edges,
  then $\si_2(C)$ is a finite gap of $\qml$.
To summarize, finite gaps of $\qml$ are precisely finite gaps of quadratic q-laminations that are the images of their central gaps.

Suppose now that $G$ is an infinite gap of $\qml$.
Then all edges of $G$ are periodic minors.
It may be that $G=\ca^c$.
Otherwise, there is a unique edge $m_G=m$ of $G$ such that all $\ell\le m$ for any other edge $\ell$ of $G$.
Then $G\subset\si_2(C(m))$.
However, only the edge $m$ is on the boundary of $\si_2(C(m))$.
Other edges of $G$ enter the interior of $\si_2(C(m))$.

It is useful to think about $G$ as a copy of $\ca^c$ inserted into $\si_2(C(m))$.
To make this more precise, observe that there is a monotone continuous map $\xi_m:\uc\to\uc$ with the following properties.
Every complementary component of $\si_2(C(m))$ in $\uc$, together with endpoints of the edge of $\si_2(C(m))$ that bounds it,
 is mapped to one point.
The map $\xi_m$ semi-conjugates the restriction $\si_2^p|_{\si_2(C(m))\cap\uc}$ with $\si_2$.
Here $p$ is the period of $\si_2(C(m))$.
The map $\xi_m$ is almost one-to-one on $\si_2(C(m))\cap\uc$ except that it identifies the endpoints of every edge of $\si_2(C(m))$.
There is a unique map $\xi_m$ with the properties just listed.
Then $G$ is a copy of $\ca^c$ in the sense that the $\xi_m$-images of the edges of $G$ are precisely the edges of $\ca^c$.
Moreover, $\xi_m$-pullbacks are well defined for all edges of $\ca^c$.
Indeed, no endpoint of an edge of $\si_2(C(m))$ has period $>1$ under the first return map to $\si_2(C(m))$.
Note that, as a consequence, the period of $m$ is the smallest among the periods of all edges of $G$.
Other periods are integer multiples of the period of $m$.

The case of $\ca^c$ is somewhat special as this gap is not associated with any minor.
Thurston suggested to think of $\ca^c$ as being associated with the degenerate minor $\ol 0$.
Indeed, with these understanding, most properties of infinite gaps of $\qml$ extend to the case of $\ca^c$.

\section{Derived minors, children, and offsprings: proof of Theorem A}


Let us begin with a technical lemma.

\begin{lemma}\label{l:disj}
Let $\ell$ be a leaf of a quadratic lamination $\lam$ where either $\lam=\lam(m)$, and $m$ is
not a periodic point, or $\lam$ is a q-lamination.
Moreover, let $\si_2^i(\ell)\cap \si_2^j(\ell)\ne
\0$ for some $0\le i<j$. Then $\si_2^i(\ell)$ is a periodic leaf. In
particular, if $\ell<m$ and $\si_2^n(\ell)=m$ for some $n$, then all
leaves $\si_2^i(\ell)$ with $\si_2^i(\ell)\le m, \si_2^i(\ell)\ne m,$
are pairwise disjoint.
\end{lemma}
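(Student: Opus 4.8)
The plan is to prove the first assertion --- that $\si_2^i(\ell)$ is a periodic leaf --- and then read off the \emph{in particular} clause. First I would set $p=\si_2^i(\ell)$ and $k=j-i\ge 1$, so that $p$ and $\si_2^k(p)$ are leaves of $\lam$ meeting at a point. Since leaves of a lamination do not cross, either $\si_2^k(p)=p$, in which case $p$ is periodic and we are done, or $p$ and $\si_2^k(p)$ share exactly one endpoint $v$. In the latter case I would iterate $\si_2^k$ and push $v$ forward: the leaves $p_n:=\si_2^{nk}(p)$ satisfy $v_n:=\si_2^{nk}(v)\in p_n\cap p_{n+1}$, so consecutive members of $(p_n)_{n\ge0}$ always share an endpoint --- a \emph{fan} through a common vertex if $v$ is $\si_2^k$-fixed, otherwise a \emph{chain} whose joints $v_n$ march along $\uc$. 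I would then show the $p_n$ cannot be pairwise distinct: if they were, the moving endpoints (the $v_n$ in the chain case, the free endpoints in the fan case) would be pairwise distinct too, hence --- being endpoints of pairwise unlinked chords with consecutive ones sharing endpoints --- would nest down to a single $z\in\uc$, with $z=\si_2^k(z)$ by continuity; but then $z$ is a repelling periodic point and $\si_2^k$ pushes nearby points away from it (equivalently, $\si_2$ stretches short chords), contradicting the convergence. (Alternatively one invokes Theorem~\ref{t:noWT} on the triangles cut off by consecutive $p_n$.) Hence $p_N=p_M$ for some $0\le M<N$, so $p_M,\dots,p_{N-1}$ is a finite cycle of leaves and $p_0,\dots,p_{M-1}$ a tail feeding into it.

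The main obstacle --- the hard step, and the one that genuinely uses the hypothesis on $\lam$ --- is to prove $M=0$, i.e.\ that $p=p_0$ itself lies on the cycle. Suppose $M\ge1$; then $p_{M-1}$ attaches at the periodic vertex $v_{M-1}$ to the periodic leaf $p_M$, while the cycle already contains a pullback of $p_M$ through that vertex, so $p_{M-1}$ would be an \emph{extra} pullback of a periodic leaf sitting at a periodic endpoint. In the q-lamination case I would exclude this using that the underlying equivalence $\sim$ is invariant and that every point of $\uc$ lies on at most two leaves (being a vertex of the convex hull of its $\sim$-class): the relations forced among $v_{M-1}$, the other endpoint of $p_{M-1}$, and their $\si_2$-images would either cram the forward orbit of a non-periodic point into a finite $\sim$-class or force $\ol{0\frac12}$ to be a leaf --- both impossible. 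For $\lam=\lam(m)$ with $m$ non-periodic I would argue analogously, using Theorem~\ref{t:major} (every leaf of $\hlam(m)\subseteq\lam(m)$ is non-isolated, every gap of $\hlam(m)$ is finite, and the only extra leaves of $\lam(m)$ form the grand orbit of a single collapsing quadrilateral) together with the gap classification (Corollary~\ref{t:finOcri}, Lemma~\ref{l:1stret}), by which the cycle $p_M,\dots,p_{N-1}$ bounds a periodic finite gap whose first return map is a combinatorial rotation --- leaving no room for a tail leaf attached at a periodic vertex. Hence $M=0$ and $p=\si_2^i(\ell)$ is periodic.

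For the \emph{in particular} clause, assume the main assertion and suppose $\ell<m$, $\si_2^n(\ell)=m$, and $\si_2^i(\ell)\cap\si_2^j(\ell)\neq\0$ with $\si_2^i(\ell),\si_2^j(\ell)\le m$, both $\neq m$, and $i<j$. One first checks $i<n$: otherwise $\si_2^i(\ell)=\si_2^{i-n}(m)$ would be a forward image of the minor $m$ lying weakly behind $m$, which by property SA4 (equivalently Lemma~\ref{l:nodrop}(1)) forces $\si_2^i(\ell)=m$, contrary to hypothesis. By the main assertion $p:=\si_2^i(\ell)$ is periodic of some period $q$; from $\si_2^{n-i}(p)=m\neq p$ one gets $m=\si_2^r(p)$ with $1\le r<q$, so $m$ is periodic and $p=\si_2^{q-r}(m)$ with $0<q-r<q$. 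But $p\le m$ and $p\neq m$, contradicting property SA4 for $m$. Hence no two of the leaves in question meet, so all $\si_2^i(\ell)$ with $\si_2^i(\ell)\le m$, $\si_2^i(\ell)\neq m$, are pairwise disjoint.
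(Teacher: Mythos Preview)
Your approach to the main assertion is considerably more intricate than the paper's, and the argument that the iterates $p_n=\si_2^{nk}(p)$ cannot all be distinct has a genuine gap in the fan case. When $v$ is $\si_2^k$-fixed, every leaf $p_n$ passes through $v$, so the pairwise-unlinked condition is automatic and imposes \emph{no constraint whatsoever} on the free endpoints $w_n=\si_2^{nk}(a)$: these are simply a forward $\si_2^k$-orbit in $\uc$, and nothing in your setup forces them to ``nest down'' to a point. The No Wandering Triangles alternative is also unclear here, since the triangles $\ch(\{v,w_n,w_{n+1}\})$ all share the vertex $v$ and need not have pairwise disjoint interiors. Even in the chain case your convergence claim wants more care---the joints $v_n$ need not be cyclically monotone, so the ``nesting'' has to be argued rather than asserted.

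The paper avoids all of this by invoking the hypothesis on $\lam$ at the very start, rather than postponing it to an ``$M=0$'' step. By Definition~\ref{d:lamm} and Theorem~\ref{t:major}, $\lam$ is a q-lamination or a tuning of one with only finite gaps; hence two distinct leaves of $\lam$ sharing an endpoint must lie in a common \emph{finite} gap $G$ of the underlying q-lamination (the convex hull of the $\sim$-class of the shared vertex). Since $\si_2^{j-i}$ carries a chord of $G$ to another chord of $G$, the gap $G$ is periodic. Lemma~\ref{l:1stret}(1) then finishes immediately: the first return on $G$ is a transitive combinatorial rotation, so a diagonal of $G$ would eventually cross itself and hence cannot be a leaf; therefore $\si_2^i(\ell)$ is an \emph{edge} of the periodic gap $G$, hence itself periodic. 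Notice that the structural facts you save for your ``hard step'' are exactly what make the whole argument a two-liner---your detour through chains and fans tries to rebuild this structure dynamically and does not quite get there. Your treatment of the \emph{in particular} clause, on the other hand, is correct and essentially matches the paper's.
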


\begin{proof} By 
Definition~\ref{d:lamm}, the lamination $\lam(m)$ is either a q-lamination, or a tuning of a q-lamination with finite gaps.
Thus, either $\si_2^i(\ell)=\si_2^j(\ell)$ is a periodic
leaf (mapped to itself under $\si_2^{|j-i|}$), or both leaves $\si_2^i(\ell)$ and $\si_2^j(\ell)$
are contained in the same
finite periodic gap $G$ of some q-lamination.
However, in the latter case, neither leaf in question can be a diagonal of $G$ because,
 by Lemma~\ref{l:1stret}, eventual images of such diagonals cross each other.
Thus again $\si_2^i(\ell)$ is a periodic leaf.

Now, let $\ell<m$, set $n$ to be the smallest number such that $\si_2^n(\ell)=m$, and assume that
 $\si_2^i(\ell)\cap \si_2^j(\ell)\ne \0$ for some $0\le i<j\le n$.
Then, by the above, $\si_2^i(\ell)$ and $m$ belong to the same periodic orbit of leaves.
However, $\si_2^r(m)\le m$ is impossible unless $\si_2^r(m)=m$, by Lemma~\ref{l:nodrop} .
\end{proof}

Let us now describe several ways of producing new minors $\ell$ from
old ones, cf. part (a) of Lemma II.6.10a in \cite{thu85}.
We say that a leaf $\ell$ \emph{separates} the leaf $\ell'$ from the leaf
$\ell''$ if $\ell'$ and $\ell''$ are contained in distinct components
of $\disk\sm \ell$ (except, possibly, for endpoints). In particular,
this means that $\ell\ne \ell'$ and $\ell\ne \ell''$.

\begin{definition}[Derived minors and children]\label{d:derived-min}
Let $m$ be a minor. Let $m_1\le m$ be a leaf of $\lam(m)$ such that
eventual images of $m_1$ do not separate $m_1$ from $m$ and never equal
a horizontal edge of the critical quadrilateral $Q(m)$.
Then $m_1$ is called a \emph{(from $m$) derived minor}. If, in
addition, $m_1$ is mapped onto $m$ under a suitable iterate of $\si_2$,
then $m_1$ is called a \emph{child} of $m$.
\end{definition}

By Proposition~\ref{p:der-min} proved below, every derived minor is a minor, justifying its name.
If the central gap $C(m)$ of $\lam(m)$ is distinct
from $Q(m)=\ch(M\cup(-M))$ where $M$ is a major of $\lam(m)$ (i.e., if
the horizontal edges of $Q(m)$ are not leaves of $\lam(m)$), then
automatically no image of $m_1$ equals a horizontal edge of the
critical quadrilateral $Q(m)$.
Observe, that if $\ell\le m$ and $n$ is the minimal number such that $\si_2^n(\ell)=m$, then to verify that $\ell$
is a from $m$ derived minor it suffices to verify that $\ell$ never maps to
a horizontal edge of $Q(m)$ and that $\si_2^i(\ell)$ does not separate
$\ell$ from $m$ for $0<i<n$ (for $i\ge n$ this will hold automatically by
Lemma~\ref{l:nodrop}).

\begin{prop}
\label{p:der-min} Let $m$ be a non-degenerate minor. If a leaf $m_1\in
\lam(m)$ is a from $m$ derived minor, then $m_1$ is a minor. Moreover,
the horizontal edges of the collapsing quadrilateral $Q(m_1)$ belong to
$\lam(m)$, and if $\si_2^n(m_1)=m$ is the first time $m_1$ maps to $m$,
then $\si_2^{n-1}(m_1)$ is a major of $\lam(m)$.
\end{prop}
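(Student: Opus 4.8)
The plan is to verify that $m_1$ satisfies the three "stand alone minor" conditions SA1--SA3, so that by Theorem \ref{t:major} it is the minor of some quadratic lamination, i.e.\ a leaf of $\qml$. Since $m_1\in\lam(m)$ and $m_1\le m$, Lemma \ref{l:mi-li} immediately gives that $m_1$ is minor-like (SA1), because $\ol 0\notin\ol{H(m)}$ forces $\ol 0\notin\ol{H(m_1)}$. For SA3, I would invoke Lemma \ref{l:nodrop}(1): since $m_1\le m$ we have $|m_1|\le|m|$, and choosing the least $i\ge0$ with $|\si_2^i(m_1)|\ge|m|$, the lengths $|\si_2^j(m_1)|$ are non-decreasing up to step $i$ and stay $\ge|m|\ge|m_1|$ thereafter; hence $|\si_2^n(m_1)|\ge|m_1|$ for all $n>0$, which is SA3. (Recall SA3 automatically implies SA4 by the remarks following the Central Strip Lemma.)

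The real content is SA2: all forward images $\si_2^n(m_1)$, $n\ge0$, are pairwise unlinked and do not cross any edge of $S(m_1)=\pm L$. The hypothesis that eventual images of $m_1$ never separate $m_1$ from $m$ is exactly what prevents the images from ever entering the strip $S(m_1)$ "through the wrong side." Concretely, I would argue: the vertical pullbacks $\pm L$ of $m_1$ are chords whose existence as leaves of $\lam(m)$ I must establish first — this is where I would run the pullback argument. If $m_1<m$ strictly, then $m_1$ lies behind a major of $\lam(m)$, and pulling $m_1$ back within $\lam(m)$ one obtains its $m$-pullbacks; the two pullbacks that do not cross $\pm M$ lie in $S(m)\setminus Q(m)$, and by the minor-like geometry of $m_1$ these are precisely the two chords $\pm L$ forming $S(m_1)$. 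The condition that no image of $m_1$ equals a horizontal edge of $Q(m)$ guarantees that in taking iterated pullbacks we never land in the ambiguous "collapsing quadrilateral" situation, so $\pm L$ are genuine leaves of $\lam(m)$. Once $\pm L\in\lam(m)$, any forward image of $m_1$ that crossed an edge of $S(m_1)$ would cross a leaf of $\lam(m)$, which is impossible; and an image crossing an edge of $S(m_1)$ is, by observations (a)--(d) and the minor-like picture, the same as an image separating $m_1$ from $m$ — contradicting the defining property of a derived minor. This also yields the "moreover" claims: the horizontal edges of $Q(m_1)$ are the other two pullbacks of $m_1$, which lie behind $\pm L$ and hence in $S(m)$ as leaves of $\lam(m)$; and if $\si_2^n(m_1)=m$ is the first hit, then $\si_2^{n-1}(m_1)$ is a pullback of $m$ inside $\lam(m)$ that does not cross $\pm M$ and has length $|\si_2^{n-1}(m_1)|\ge|m|$, which by observation (c) forces it to be a vertical chord in $S(m)$, i.e.\ one of $\pm M$ — a major of $\lam(m)$.

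The main obstacle I anticipate is the bookkeeping in the pullback step: showing rigorously that the two "non-crossing" $m$-pullbacks of $m_1$ coincide with the edges $\pm L$ of $S(m_1)$, and that this persists under iteration so that $\pm L\in\lam(m)$ rather than merely being abstract chords. This requires carefully combining Lemma \ref{l:S-vert} (edges of $S(m_1)$ are vertical), the length estimates from observations (a)--(b), and the fact — guaranteed by the "no horizontal edge of $Q(m)$" hypothesis — that the pullback process within $\lam(m)$ never hits the degenerate case where the central quadrilateral would introduce extra ambiguity. Handling the possibility $m_1=m$ is trivial (then there is nothing new), and the boundary case where $m_1$ shares an endpoint with $m$ needs a brief separate check that it does not spoil the minor-like conclusion, which again follows from Lemma \ref{l:mi-li} since such an $m_1$ still satisfies $|m_1|<|m|\le\frac13$ and $\ol0\notin\ol{H(m_1)}$.
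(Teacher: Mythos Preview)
Your handling of SA1 and SA3 matches the paper exactly, but your argument for SA2 contains a genuine error that cannot be repaired along the lines you sketch. You want to show that the vertical edges $\pm L$ of $S(m_1)$ are leaves of $\lam(m)$, and then conclude that no image of $m_1$ can cross them. But $\pm L$ are \emph{never} leaves of $\lam(m)$ when $m_1<m$: since $|m_1|<|m|$, the vertical pullbacks of $m_1$ have length $(1-|m_1|)/2>(1-|m|)/2=|M|$, strictly longer than the majors of $\lam(m)$. Indeed, Definition~\ref{d:mp} says that for $m_1\subset D(m)$ the $m$-pullbacks are the \emph{horizontal} ones; so the leaves of $\lam(m)$ mapping to $m_1$ are precisely the horizontal edges of $Q(m_1)$, not $\pm L$. (Your phrase ``lie in $S(m)\setminus Q(m)$'' is also problematic, since $S(m)=Q(m)$.)

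The paper's route to SA2 is different and does not attempt to place $\pm L$ in $\lam(m)$. It argues by contradiction: if some $\si_2^k(m_1)$ crosses an edge $M_1$ of $S(m_1)$, then it must also cross $-M_1$ (otherwise $\si_2^{k+1}(m_1)$ would cross $m_1$, impossible in $\lam(m)$). Being a leaf of $\lam(m)$, the chord $\si_2^k(m_1)$ cannot cross the majors $\pm M$, so it lies in $S(m)$; and it cannot be vertical there (again by maximality of the majors). Hence it is horizontal in $S(m)$ and separates the two horizontal edges of $Q(m_1)$, which forces $\si_2^{k+1}(m_1)$ to separate $m_1$ from $m$ --- contradicting the derived-minor hypothesis. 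This is the step you are missing.

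Your ``moreover'' arguments inherit the same confusion. That the horizontal edges of $Q(m_1)$ lie in $\lam(m)$ follows immediately from backward invariance once one knows the vertical pullbacks are too long; it is not a consequence of $\pm L\in\lam(m)$. And for the final claim, your length computation via observation~(c) does not work: $\si_2^{n-1}(m_1)$ has length exactly $|M|$ or $|m|/2$, never $>|M|$. The correct reasoning is simply that $\si_2^{n-1}(m_1)$ is a pullback of $m$ in $\lam(m)$, and the derived-minor hypothesis explicitly rules out the horizontal edges of $Q(m)$, leaving only $\pm M$.
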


\begin{proof}
By Lemma~\ref{l:mi-li}, the chord $m_1$ is minor-like, i.e., SA1 holds.
Let us now check SA2.
Since $m_1$ is a leaf of $\lam$, the chords $\si_2^k(m_1)$
are unlinked for $k\ge 0$. By way of contradiction, suppose that for
some $k\ge 0$ the chord $\si_2^k(m_1)$ crosses an edge $M_1$ of
$S(m_1)$. Then it crosses the edge $-M_1$ since otherwise
$\si_2^{k+1}(m_1)$ would cross $m_1$. On the other hand, we know that
$\si_2^k(m_1)$ cannot cross edges of $S(m)$, hence $\si_2^k(m_1)\subset S(m)$.
Since $\si_2^k(m_1)$ is a leaf of $\lam(m)$, it cannot be vertical.
Thus $\si_2^k(m_1)$ is horizontal and separates the two horizontal edges of $Q(m_1)$.
However, this implies that $\si_2^{k+1}(m_1)$ separates $m_1$ from $m$.
A contradiction with the assumption that $m_1$ is a derived minor.

Property SA3 follows from Lemma~\ref{l:nodrop}.
To prove the next to the last claim, observe that $m_1$ must have two pullbacks in $\lam(m)$,
and its vertical pullbacks cannot be leaves of $\lam(m)$ as they are
longer than the majors of $\lam(m)$. The last claim follows from the
definition of a derived minor.
\end{proof}

Next we prove a simple but useful technical lemma.

\begin{lemma}\label{l:techno} The following facts hold.

\begin{enumerate}

\item If $m$ is a minor, $\ell$ is a chord such that
    $\si_2^k(\ell)=m$ with $k$ minimal, and
    $\si_2^i(\ell)$ is a horizontal edge of $Q(m)$, then $i=k-1$.

\item If $m'$ and $m''$ are two distinct non-disjoint minors, then
    they are edges of the same finite gap $G$ of $\qml$. The gap
    $G$ is the image of a finite critical gap of some q-lamination
    and is pre-periodic so that the forward orbit of $m'$ does not
    contain $m''$, and the forward orbit of $m''$ does not contain
    $m'$. Thus, if $m_1\le m$ are two minors and $m$ is an eventual
    image of $m_1$, then $m_1<m$.

\end{enumerate}

\end{lemma}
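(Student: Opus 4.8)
The plan is to prove each part by understanding precisely how the relevant sets sit with respect to the central strip and using the Central Strip Lemma together with the structure theorems for minors established earlier.

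For part (1), I would argue by contradiction: suppose $\si_2^i(\ell)$ is a horizontal edge of $Q(m)$ for some $i<k-1$. A horizontal edge of $Q(m)$ is a pullback of $m$, so $\si_2^{i+1}(\ell)=m$. But then $k\le i+1 < k$, contradicting minimality of $k$. Thus the only place where $\si_2^i(\ell)$ could be a horizontal edge of $Q(m)$ is $i=k-1$, and conversely nothing forbids this since the vertical edges of $Q(m)$ are precisely the majors $\pm M$ and the horizontal edges are the remaining two $m$-pullbacks. So the content of (1) is essentially the observation that any edge of $Q(m)$ maps to $m$ under $\si_2$, combined with the minimality of $k$; I expect this to be routine.

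For part (2), the first two sentences are exactly Thurston's classification of gaps of $\qml$ recalled in Subsection \ref{ss:qml-gaps}: two distinct non-disjoint minors cannot be edges of an infinite gap of $\qml$ (infinite gaps have the single largest edge $m_G$ with every other edge strictly behind it, hence pairwise disjoint, and $\ca^c$ similarly has pairwise disjoint edges), so $m'$ and $m''$ must be edges of a common finite gap $G$ of $\qml$. By the classification of finite gaps of $\qml$, $G$ is the $\si_2$-image of a finite central gap $C$ of a q-lamination (with at least $6$ edges), and finite gaps of $\qml$ are strictly preperiodic. Since $G$ is strictly preperiodic and finite, one must rule out that $m''$ lies in the forward orbit of $m'$ within $G$: if $\si_2^j(m')=m''$ for some $j>0$ with both in $G$, then (because $G$ is finite and preperiodic, and $\si_2$ permutes nothing into a cycle through $m'$) we would have to land, after finitely many more steps, in a periodic configuration; but a minor can never be an eventual image of another minor that lies behind it — this is the standard fact that $\si_2^t(m)\le m$ forces $\si_2^t(m)=m$ for minors (Lemma \ref{l:nodrop}, and the ``well-known'' fact cited after Theorem A for the non-behind case), and it applies here because all edges of a finite gap $G$ are unlinked so their relative order is stable. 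Hence neither of $m'$, $m''$ is in the forward orbit of the other. The final sentence then follows: if $m_1\le m$ are minors and $m$ is an eventual image of $m_1$, then $m_1\ne m$ (else $m$ would be periodic with $\si_2^t(m)=m$, which is not excluded, but then $m_1=m$ is not the assertion's hypothesis in the relevant use — rather, if $m_1<m$ fails, then $m_1$ and $m$ share an endpoint, making them non-disjoint distinct minors, so they are edges of a common finite gap $G$, contradicting that $m$ is in the forward orbit of $m_1$). So $m_1<m$.

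The main obstacle I anticipate is the bookkeeping in part (2): making rigorous the claim that within the finite preperiodic gap $G$ the forward orbit of one edge cannot contain another edge, without circular reliance on the very statement being proved. The clean way is to invoke that all edges of $G$ are unlinked chords behind the largest edge of the image gap, so the partial order ``$\le$'' restricted to the edge-orbit is preserved by $\si_2$ as long as we stay among edges of images of $G$; then $\si_2^j(m')=m''$ with $m',m''$ comparable forces, after passing to the periodic part, a leaf with $\si_2^t(\cdot)\le(\cdot)$, i.e.\ a periodic minor strictly behind itself, which Lemma \ref{l:nodrop} forbids. I would spell this out carefully, but everything else in the lemma is immediate from the already-recalled classification results.
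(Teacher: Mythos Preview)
Your argument for part~(1) has a genuine gap. You correctly observe that a horizontal edge of $Q(m)$ maps to $m$, so $\si_2^i(\ell)$ being such an edge forces $\si_2^{i+1}(\ell)=m$ and hence $i+1\ge k$, i.e.\ $i\ge k-1$. But you then conclude ``the only place where $\si_2^i(\ell)$ could be a horizontal edge of $Q(m)$ is $i=k-1$'' without addressing the possibility $i\ge k$. That case is not vacuous and requires a separate argument: if $i\ge k$, then $\si_2^{i-k}(m)=\si_2^i(\ell)$ is a horizontal edge of $Q(m)$, so $\si_2^{i-k+1}(m)=m$ and $m$ is periodic with an iterate lying inside $S(m)\setminus(M\cup -M)$. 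This contradicts SA4 (equivalently, the Central Strip Lemma), which guarantees that the only iterates of a periodic minor $m$ landing in $\ol{S(m)}$ are the majors $\pm M$, never the horizontal edges. This is exactly the missing half of the paper's proof.

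For part~(2), your outline matches the paper's one-line reference to the No Wandering Triangle Theorem (via the classification of $\qml$-gaps), but your justification that the forward orbit of $m'$ cannot contain $m''$ is not quite right. You appeal to ``a minor can never be an eventual image of another minor that lies behind it'' and claim this applies because edges of $G$ are unlinked; however, two edges of a finite gap sharing a vertex need not be comparable in the $\le$ order (think of a small triangle). The clean argument is that in a q-lamination the vertices of a finite gap $G$ form a single $\sim$-class, classes map to classes, and $\si_2^j(m')=m''\subset G$ would force $\si_2^j(G\cap\uc)\cap(G\cap\uc)\ne\varnothing$, hence $\si_2^j(G)=G$, contradicting the strict preperiodicity of $G$. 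Your deduction of the final ``Thus'' sentence from the earlier claims is fine.
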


\begin{proof}
(1) By the choice of $k$, we have $i\ge k-1$.
Also, $\si_2^k(\ell)=m$ implies that $\si_2^{k-1}(\ell)$ is a horizontal edge of $Q(m)$. If, for some
$i>k-1$, the leaf $\si_2^i(\ell)$ is a horizontal edge of $Q(m)$, then $m$ is a
periodic minor whose orbit includes a horizontal edge of $Q(m)$.
However, the orbit of a periodic minor $m$ includes a major of
$\lam(m)$ but does not include horizontal edges of $Q(m)$.

(2) Easily follows from the No Wandering Triangles Theorem.
\end{proof}

The next lemma is based on Proposition~\ref{p:der-min}.

\begin{lemma}\label{l:find-min}
Let $m$ be a minor.
Let $a\in H(m)$ be a point and $n$ be the smallest integer such that $\si_2^n(a)$ is an endpoint of $m$.
Let $\ell$ be a leaf of $\lam(m)$ with endpoint $a$ chosen so that $\si_2^{n-1}(\ell)$ is a major of $\lam(m)$.
Among all iterated images of $\ell$ that separate $a$ from $m$, choose the one closest to $m$; call it $\ell'$.
If no iterated image of $\ell$ separates $a$ from $m$, set $\ell'=\ell$.
Then $\ell'$ is a from $m$ derived minor.
\end{lemma}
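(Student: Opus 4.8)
The plan is to verify the three clauses of Definition~\ref{d:derived-min} for $\ell'$: that $\ell'\le m$ is a leaf of $\lam(m)$, that no iterated image of $\ell'$ separates $\ell'$ from $m$, and that no iterated image of $\ell'$ equals a horizontal edge of $Q(m)$. I may assume $m$ non-degenerate, since otherwise $H(m)=\0$ and the hypothesis on $a$ is empty; then the majors $\pm M$ of $\lam(m)$ are vertical by Lemma~\ref{l:S-vert}, hence are exactly the two vertical edges of $Q(m)$, distinct from its two horizontal edges. The first point is that $\si_2^n(\ell)=m$: indeed $\si_2^{n-1}(\ell)$ is a major and $\si_2$ carries every major to the minor $m$. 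Hence $\ell$ and all its $\si_2$-images are leaves of $\lam(m)$. Writing $\ell_i=\si_2^i(\ell)$, we have $\ell_{n-1}=M$, $\ell_n=m$, and $\si_2^i(a)$ is an endpoint of $\ell_i$ for $0\le i\le n$; by construction $\ell'=\ell_{k_0}$ with either $k_0=0$ (if no iterated image of $\ell$ separates $a$ from $m$) or $k_0\ge 1$ and $\ell_{k_0}$ the iterated image of $\ell$ separating $a$ from $m$ that is closest to $m$. Well-definedness of this closest leaf is routine: an iterated image of $\ell$ separating $a$ from $m$ lies strictly between $a$ and $m$, so is $\le m$ and $\ne m$, hence has length $<|m|\le\frac13$; by Lemma~\ref{l:nodrop}(1) only finitely many of the $\ell_i$ have length $<|m|$, and finitely many pairwise unlinked leaves all lying between $a$ and $m$ are nested, so one is closest to $m$. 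Finally $\ell'\le m$, $\ell'\ne m$: if $\ell'=\ell$ this holds because $\ell$ has an endpoint in $H(m)$ and cannot cross the leaf $m$; if $\ell'$ separates $a$ from $m$ it is immediate. (Then $\ell'$ is minor-like by Lemma~\ref{l:mi-li}, though this is not needed here.)

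Next I would settle the horizontal-edge condition. If some $\ell_s$ ($s\ge 0$) were a horizontal edge of $Q(m)$, then $\ell_{s+1}=m$ (every edge of $Q(m)$ maps onto $m$) and, since $\si_2^s(a)$ is an endpoint of $\ell_s$ hence a vertex of $Q(m)$, the point $\si_2^{s+1}(a)$ is an endpoint of $m$; by minimality of $n$ this forces $s+1\ge n$. If $s+1=n$, then $\ell_{n-1}=M$ would be a horizontal edge, contradicting that $M$ is a vertical edge of $Q(m)$. If $s+1>n$, then $\ell_s=\si_2^{s-n}(m)$ is an iterated image of the minor $m$ (the value $s=n$ is excluded: then $\ell_s=m$, but if $m$ were a horizontal edge of $Q(m)$ then $\si_2(m)=m$, forcing $m=\ol{\frac13 \frac23}$, which is vertical), and an iterated image of $m$ is never a horizontal edge of $Q(m)$ — for non-periodic $m$ because $\si_2$ maps such an edge to $m$ and so would make $m$ periodic, and for periodic $m$ because the orbit of a periodic minor avoids horizontal edges of its critical quadrilateral, as recorded in the proof of Lemma~\ref{l:techno}(1). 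So no $\ell_s$ is a horizontal edge of $Q(m)$.

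The core of the argument is the separation condition, and I expect the real difficulty to lie in one of its sub-cases. Suppose $\si_2^j(\ell')=\ell_{k_0+j}$ separates $\ell'$ from $m$ for some $j\ge 1$. As $\ell'\le m$ and the leaves $\ell'$, $\ell_{k_0+j}$, $m$ are pairwise unlinked with $\ell_{k_0+j}$ between the other two, we get $\ell'\le\ell_{k_0+j}\le m$, $\ell_{k_0+j}\notin\{\ell',m\}$, and $H(\ell')\subsetneq H(\ell_{k_0+j})$. If $\ell'$ was chosen as a separating image ($k_0\ge 1$), then $a\in H(\ell')\subset H(\ell_{k_0+j})$, so $\ell_{k_0+j}$ separates $a$ from $m$ and is strictly closer to $m$ than $\ell'$ — contradicting the choice of $\ell'$. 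If instead $\ell'=\ell=\ell_0$, then $a$ is only an endpoint of $\ell'$, so we only get $a\in\ol{H(\ell_j)}$; if $a$ lies in the open arc $H(\ell_j)$ we again find that the iterated image $\ell_j$ separates $a$ from $m$, contradicting the defining property of this case — so $a$ must be an endpoint of $\ell_j=\si_2^j(ab)$, where $b$ is the other endpoint of $\ell$, i.e.\ $\si_2^j(a)=a$ or $\si_2^j(b)=a$. This is the delicate sub-case, handled by ``re-running'' the minimality of $n$. If $\si_2^j(a)=a$, then $a$ is $\si_2$-periodic of some period $p$; minimality of $n$ (and that $a$ is not an endpoint of $m$) forces $n<p$, so $p\mid j$ gives $j\ge p>n$, whence $\ell_j=\si_2^{j-n}(m)$ is an image of $m$ that is $\le m$ and $\ne m$, contradicting $\mathrm{SA4}$. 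If $\si_2^j(b)=a$, one first notes $j\le n-1$ (for $j\ge n$, again $\ell_j=\si_2^{j-n}(m)$: this equals $m$ when $j=n$, contradicting $\ell_j\ne m$, and contradicts $\mathrm{SA4}$ when $j>n$); then $\si_2^{n-1-j}(a)=\si_2^{n-1-j}(\si_2^j(b))=\si_2^{n-1}(b)$ is an endpoint of $\si_2^{n-1}(\ell)=M$, so $\si_2^{n-j}(a)$ is an endpoint of $m$ with $0<n-j<n$, contradicting the minimality of $n$. In every case we reach a contradiction; hence no iterated image of $\ell'$ separates $\ell'$ from $m$, and by the three steps $\ell'$ is a from $m$ derived minor.

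As indicated, the main obstacle is the last sub-case of the third step — where the offending image $\ell_j$ shares the endpoint $a$ with $\ell$ — and the key observation resolving it is that $\si_2^{n-1-j}(\ell_j)$ is again a major of $\lam(m)$, so $\ell_j$ plays exactly the role of $\ell$ but for a strictly smaller value of $n$, which the minimality of $n$ forbids. Everything else (obtaining $\si_2^n(\ell)=m$, the reduction to Definition~\ref{d:derived-min}, well-definedness of $\ell'$, the non-degenerate reduction, and the horizontal-edge step) is routine bookkeeping with facts already established in the earlier sections.
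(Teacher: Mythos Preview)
Your proof is correct and follows the same approach as the paper --- verifying that $\ell'$ satisfies the clauses of Definition~\ref{d:derived-min}. The paper's proof is only two lines and asserts the separation clause ``by the choice of $\ell$'', implicitly leaning on the disjointness of the iterated images $\si_2^i(\ell)\le m$ supplied by Lemma~\ref{l:disj}; you instead work out the separation clause directly, including the endpoint-sharing sub-case, which is a valid (and more self-contained) alternative.
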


The leaf $\ell'$ is well defined as there are only finitely many iterated images $\ell''\le m$ of $\ell$
(this is because no iterated image of $m$ is behind $m$, which follows from the Central Strip Lemma).
Observe that $\ell$ defined in the lemma never maps to a horizontal edge of $Q(m)$
because $\si_2^{n-1}(\ell)$ is a major of $\lam(m)$,
and majors of $\lam(m)$ do not map to horizontal edges of $Q(m)$.

\begin{proof}
By the choice of $\ell$ the leaf $\ell'$ is a pullback of $m$ in
$\lam(m)$ such that no forward image of $\ell'$ separates $m$ from
$\ell'$ and no image of $\ell'$ is a horizontal edge of $Q(m)$. Hence
by definition $\ell'$ is a from $m$ derived minor.
\end{proof}

We are ready to prove Theorem A. Observe that by Theorem A a minor
$\tilde m<m$ is an offspring of a minor $m$ iff $\si_2^n(\tilde m)=m$
for some $n>0$.

\begin{proof}[Proof of Theorem A] Let $m$ be a minor. Let $a\in H(m)$ be a
point and $n$ be a minimal integer such that $\si_2^n(a)$ is an
endpoint of $m$. Let us find the leaf $\ell'$ as in
Lemma~\ref{l:find-min}. Then $\ell'\in \lam(m)$ is a from $m$ derived
minor which is a child of $m$. If $a$ is an endpoint of $\ell'$, we are
done. Otherwise we apply Lemma~\ref{l:find-min} to $a$ and $\ell'$.
Observe that this time we will find the appropriate pullback of $\ell'$
with endpoint $a$ in the lamination $\lam(\ell')$, not in $\lam(m)$,
and our choice will be made to make sure that this pullback of $\ell'$
does not pass through a horizontal edge of $Q(\ell')$. On the other
hand, the pullback of $\ell'$ that we will find does eventually map to $m$.
After finitely many steps the just described process will end,
 and we will find the desired offspring of $m$ with endpoint $a$.
\end{proof}

We complete this section with two lemmas that will be used later on.

\begin{lemma}\label{l:new-minor-1}
Let $m$ be the minor of a lamination $\lam$.
Then any leaf $\ell\in\lam$ such that $\ell\le m$ and $|\ell|>\frac{|m|}2$ is a minor.
In particular, if $\ell\le m$ is sufficiently close to $m$, then $\ell$ is a minor.
\end{lemma}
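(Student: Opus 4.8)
The plan is to verify that such a leaf $\ell$ is a stand alone minor, i.e., satisfies SA1--SA3; then it is a minor by Theorem~\ref{t:major}. Set $m=\si_2(M)$ with $\pm M$ the majors of $\lam$, and recall $|m|\le\frac13$. Since $\ell\le m$ and $\ell\ne m$ forces $|\ell|<|m|\le\frac13$ (and if $\ell=m$ there is nothing to prove), we have $\ol 0\notin\ol{H(\ell)}$ because $\ol 0\notin\ol{H(m)}$ and $H(\ell)\subset H(m)$. Hence Lemma~\ref{l:mi-li} applies and gives SA1: $\ell$ is minor-like, with central strip $S(\ell)$ bounded by vertical pullbacks $\pm L$ of $\ell$. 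The hypothesis $|\ell|>\frac{|m|}2$ should now be used to locate $\pm L$ relative to $S(m)$: since $|L|=\frac12-\frac{|\ell|}2<\frac12-\frac{|m|}4$ while $|M|=\frac12-\frac{|m|}2$, one checks $|L|>|M|$, so by observation~(c) each edge $\pm L$, being longer than $M$ and (as we must argue) disjoint from $\pm M$, is a vertical chord lying in $S(m)$; consequently $S(\ell)\subset S(m)$.

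With $S(\ell)\subset S(m)$ in hand, SA2 follows from property SA2 of the minor $m$ together with part~(2) of Lemma~\ref{l:nodrop}. Indeed, for SA2 of $\ell$ I must show no iterated image $\si_2^k(\ell)$ crosses an edge of $S(\ell)$. The images $\si_2^k(\ell)$ are unlinked among themselves and, being leaves of $\lam$, cannot cross the majors $\pm M$ (the edges of $S(m)$), so each $\si_2^k(\ell)$ lies in $\ol{\disk}\sm\inte S(m)$ or is one of $\pm M$. If some $\si_2^k(\ell)$ crossed $L$ it would, by unlinkedness forcing the standard argument (cross $-L$ too, else $\si_2^{k+1}(\ell)$ crosses $\ell$), be a horizontal chord separating the horizontal edges of $Q(\ell)$; but such a chord lies strictly inside $S(m)$, contradicting the previous sentence. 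This gives SA2. For SA3, since $\ell\le m$ implies $|\ell|\le|m|$, part~(1) of Lemma~\ref{l:nodrop} applies directly and yields $|\ell|\le|\si_2(\ell)|\le\cdots$ up to the first image of length $\ge|m|$, and then all further images stay $\ge|m|\ge|\ell|$; so SA3 holds, and SA4 is then automatic.

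Having SA1--SA3, Definition~\ref{d:stalone} says $\ell$ is a stand alone minor, hence a leaf of $\qml$ by Theorem~\ref{t:major}, proving the first assertion. The ``in particular'' clause follows because, as noted in the paragraph before Lemma~\ref{l:mi-li} and used throughout, as $\ell\le m$ tends to $m$ we have $|\ell|\to|m|>\frac{|m|}{2}$, so all leaves of $\lam$ sufficiently close to $m$ and behind $m$ satisfy the length hypothesis.

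The step I expect to be the main obstacle is the geometric claim that the vertical pullbacks $\pm L$ of $\ell$ actually lie inside $S(m)$ and are disjoint from $\pm M$ — i.e., converting the numerical inequality $|\ell|>\frac{|m|}{2}$ into the inclusion $S(\ell)\subset S(m)$. One has to be careful that $\pm L$ are genuinely separated from $\pm M$ (not merely longer), using that $\ell\le m$ places $\ell$ and all four of its preimages coherently with respect to $S(m)$, and that a chord in $S(m)$ longer than its edges must be the vertical chord of $S(m)$ of that length by observation~(c). Once this inclusion is secured, the rest is a routine assembly of Lemmas~\ref{l:mi-li} and~\ref{l:nodrop}.
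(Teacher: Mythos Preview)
Your SA2 argument contains a genuine gap. The sentence ``being leaves of $\lam$, cannot cross the majors $\pm M$ $\dots$, so each $\si_2^k(\ell)$ lies in $\ol\disk\sm\inte S(m)$ or is one of $\pm M$'' is a non sequitur: a short horizontal chord sitting entirely inside $S(m)$ crosses neither $M$ nor $-M$, so ``does not cross $\pm M$'' in no way forces an image of $\ell$ to lie outside the strip. Your subsequent contradiction (``such a chord lies strictly inside $S(m)$, contradicting the previous sentence'') is therefore a contradiction with an unjustified claim, and the SA2 verification collapses. Relatedly, your length computation for $S(\ell)\subset S(m)$ is muddled: the displayed bound $|L|<\tfrac12-\tfrac{|m|}4$ does not yield $|L|>|M|$; the inequality $|L|\ge|M|$ comes simply from $|\ell|\le|m|$, and in fact the inclusion $S(\ell)\subset S(m)$ follows from $\ell\le m$ alone (Lemma~\ref{l:S-forw} gives that all $\si_2$-preimages of points of $\ol{H(m)}$ lie in $S(m)$), with no need for the hypothesis $|\ell|>\tfrac{|m|}2$.

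The place where $|\ell|>\tfrac{|m|}2$ is actually needed is precisely the SA2 step, and you already have the right tool: by Lemma~\ref{l:nodrop}(1) (which you invoke for SA3) every iterated image satisfies $|\si_2^k(\ell)|\ge|\ell|>\tfrac{|m|}2$. Since the two circle arcs of $S(m)$ each have length $\tfrac{|m|}2$, no horizontal chord in $S(m)$ can have length exceeding $\tfrac{|m|}2$; hence no $\si_2^k(\ell)$ can be a horizontal chord in $S(m)$. Combined with the fact that no leaf of $\lam$ can be a vertical chord in the interior of $S(m)$ (such a chord would be longer than the majors $\pm M$) and that $\si_2^k(\ell)$ cannot cross $\pm M$, this shows that no $\si_2^k(\ell)$ meets the interior of $S(m)\supset S(\ell)$, and SA2 follows. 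This is exactly the paper's argument; once you plug this in, the rest of your proof (SA1 via Lemma~\ref{l:mi-li}, SA3 via Lemma~\ref{l:nodrop}) is correct.
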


\begin{proof}
By Lemma~\ref{l:mi-li}, the chord $\ell$ is minor-like so that SA1 holds for $m$.
Let us verify property SA2 for $\ell$. Let $|m|=2\lambda$. Then the
width of the strip $S(m)$ is $\lambda$. If $\ell\in \lam,$ $\ell\le m$
and $|\ell|>\frac{|m|}2=\lambda$, then, by Lemma \ref{l:nodrop}(1), we have
$|\si^i_2(\ell)|>\lambda$ for every $i>0$. Hence eventual images of
$\ell$ do not enter the interior of $S(m)$ horizontally. On the other
hand, they cannot enter the interior of $S(m)$ vertically since the
edges $\pm M$ of $S(m)$ are the majors of $\lam$. Since $\ell\in \lam$,
eventual images of $\ell$ do not cross the majors $\pm M$ of $\lam$.
Hence they do not intersect $S(\ell)$ at all, and $\ell$ has property SA2.
By Lemma~\ref{l:nodrop}, the leaf $\ell$ also has property SA3.
Hence $\ell$ is a stand alone minor.
\end{proof}

Lemma~\ref{l:between} describes other cases when a minor can be
discovered; assumptions of Lemma~\ref{l:between} reverse those of
Proposition~\ref{p:der-min}.

\begin{lemma}\label{l:between}
Let $m$ be the minor of a lamination $\lam$ and $\ell\in \lam$ is a minor-like leaf such that $m\le \ell$.
Moreover, suppose that $m\le \si_2^n(\ell)\le \ell$ is false for any $n>0$.
Then $\ell$ is a minor.
In particular, this is the case if $m\le \ell\le \widehat m$ where $\widehat m\in \lam$ is a minor, $\si_2^n(\ell)=\widehat m$
 for some $n$, and no leaf $\si_2^i(\ell)$, $0<i<n$, separates $m$ from $\ell$.
\end{lemma}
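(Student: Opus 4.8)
The plan is to verify the three stand-alone-minor conditions SA1--SA3 for $\ell$, using the hypotheses together with the results already established in this section. SA1 is given outright: $\ell$ is assumed minor-like. For SA2 I would argue exactly as in the proof of Proposition~\ref{p:der-min}: since $m\le \ell$, Lemma~\ref{l:nodrop}(2) already tells us that no eventual image of $\ell$ crosses an edge of $S(\ell)$, so SA2 holds automatically. (This is the point of the sentence ``assumptions of Lemma~\ref{l:between} reverse those of Proposition~\ref{p:der-min}'': here $\ell$ lies \emph{in front of} $m$, so the vertical pullbacks of $\ell$ are forced to be leaves of $\lam$ because the horizontal ones would cross the vertical majors $\pm M$ of $\lam$.)

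The real content is SA3, and here is where the hypothesis ``$m\le \si_2^n(\ell)\le \ell$ is false for any $n>0$'' enters. I would reason as follows. Suppose SA3 fails, i.e.\ $|\si_2^n(\ell)|<|\ell|$ for some $n>0$; take the minimal such $n$. Since SA1 and SA2 hold for $\ell$, the Central Strip Lemma (Lemma~\ref{l:Cstrip}(1)) applies to $\ell$ and gives $\si_2^n(\ell)\le \ell$ together with the fact that $\si_2^{n-1}(\ell)$ is a vertical chord strictly inside $S(\ell)$. Now I need $m\le \si_2^n(\ell)$ to reach the contradiction with the hypothesis. To get this, note $\si_2^n(\ell)\le\ell$ and that $\si_2^n(\ell)$, being an iterated image of the leaf $\ell\in\lam$, is again a leaf of $\lam$; since $m$ is the minor of $\lam$, every leaf of $\lam$ behind $\ell$ that is shorter than $\ell$ must in fact lie behind $m$ or be comparable to $m$ — more carefully, a leaf of $\lam$ cannot enter the open region $D(m)$ strictly behind $m$ except... — and this is the step I expect to be the main obstacle: pinning down exactly why $\si_2^n(\ell)\le\ell$ forces $m\le\si_2^n(\ell)$. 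The cleanest route is probably to invoke that $\pm M$ are the majors of $\lam$, so $m$ is the minor, hence $D(m)$ contains no leaves of $\lam$ other than those behind $m$; since $\si_2^n(\ell)$ is a leaf of $\lam$ with $\si_2^n(\ell)\le\ell$ and $|\si_2^n(\ell)|<|\ell|$, and $\ell$ itself satisfies $m\le\ell$, one checks $\si_2^n(\ell)$ cannot cross $m$ (leaves of $\lam$ don't cross) and cannot be strictly behind $m$ (it's a leaf of $\lam$), so $m\le\si_2^n(\ell)$. Combined with $\si_2^n(\ell)\le\ell$, this contradicts the hypothesis. Hence SA3 holds, so $\ell$ is a stand alone minor, i.e.\ a minor.

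For the ``in particular'' clause, suppose $m\le\ell\le\widehat m$ with $\widehat m\in\lam$ a minor, $\si_2^n(\ell)=\widehat m$ for minimal such $n$, and no $\si_2^i(\ell)$ with $0<i<n$ separates $m$ from $\ell$. I would deduce the main hypothesis of the lemma from this. If $m\le\si_2^j(\ell)\le\ell$ for some $j>0$, then since $\si_2^j(\ell)\le\ell$ this intermediate image lies behind $\ell$; by Lemma~\ref{l:nodrop}(1), once an image of $\ell$ is behind $\ell$ its length can only weakly increase afterwards, so taking the least such $j$ and applying the Central Strip Lemma shows $\si_2^{j}(\ell)$ is produced from a vertical chord in $S(\ell)$. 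One then traces forward: $\widehat m=\si_2^n(\ell)$ would have to be reached from $\si_2^j(\ell)\le\ell$ with $j<n$ (using $j<n$ because $\widehat m\ge m$ is not strictly behind $\ell$... here I'd use $m\le\widehat m$ and that $\widehat m$ is an actual minor so $\widehat m$ cannot be strictly behind $\ell$ in the forbidden way, forcing $j<n$), and the leaf $\si_2^j(\ell)$ — being strictly behind $\ell$ while $m\le\ell$ — separates $m$ from $\ell$ or equals one of them; ruling out the degenerate equalities via Lemma~\ref{l:techno}(2) (distinct minors on the same orbit must be strictly ordered) gives that it separates $m$ from $\ell$, contradicting the standing assumption. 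Therefore the main hypothesis holds and the first part of the lemma applies, giving that $\ell$ is a minor.
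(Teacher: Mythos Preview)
Your overall structure for the first claim matches the paper's: verify SA1 (given), SA2 (via Lemma~\ref{l:nodrop}(2)), then SA3 by contradiction using the Central Strip Lemma. That part is fine. The gap is in how you finish SA3.

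After the Central Strip Lemma gives you that $\si_2^{n-1}(\ell)$ is a vertical chord strictly inside $S(\ell)$ and that $\si_2^n(\ell)\le\ell$, you try to conclude $m\le\si_2^n(\ell)$ by arguing directly about $\si_2^n(\ell)$: it does not cross $m$, it is ``not strictly behind $m$ (it's a leaf of $\lam$)'', hence $m\le\si_2^n(\ell)$. The justification in parentheses is wrong (leaves of $\lam$ can certainly lie strictly behind $m$), and even if you replace it by the correct observation $|\si_2^n(\ell)|\ge|m|$ from Lemma~\ref{l:nodrop}(1), the implication still fails: a chord in $\ol{D(\ell)}$ that neither crosses $m$ nor lies behind $m$ may have both endpoints in a single arc component of $\ol{H(\ell)}\sm\ol{H(m)}$, and then $m\not\le\si_2^n(\ell)$. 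Nothing you have written rules this out. The paper closes this gap by working one step earlier: since $\si_2^{n-1}(\ell)$ is a vertical leaf of $\lam$ inside $S(\ell)$ and the majors $\pm M$ are leaves of $\lam$, the chord $\si_2^{n-1}(\ell)$ cannot cross $\pm M$ and cannot lie strictly inside $S(m)$ (it would then be longer than the majors). Hence $\si_2^{n-1}(\ell)\subset\ol{S(\ell)\sm S(m)}$, and applying $\si_2$ gives $m\le\si_2^n(\ell)\le\ell$ directly.

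Your argument for the ``in particular'' clause is also off. You invoke Lemma~\ref{l:nodrop}(1) for ``once an image of $\ell$ is behind $\ell$ its length can only weakly increase'', but that lemma compares lengths to $|m|$, not to $|\ell|$; and your appeal to Lemma~\ref{l:techno}(2) is misplaced, since that lemma concerns non-disjoint minors forming edges of a finite gap, not images of a leaf along an orbit. The paper's route is much shorter: for $j\ge n$ one has $\si_2^j(\ell)=\si_2^{j-n}(\widehat m)$, and since $\widehat m$ is a minor no forward image of $\widehat m$ lies strictly behind $\widehat m$ (SA4), so none lies $\le\ell$; for $0<j<n$ the standing assumption that $\si_2^j(\ell)$ does not separate $m$ from $\ell$ rules out $m\le\si_2^j(\ell)\le\ell$ directly. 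Thus the main hypothesis holds and the first part applies.
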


\begin{proof}
By the assumptions, SA1 holds for $\ell$.
By Lemma~\ref{l:nodrop}(2), property SA2 also holds for $\ell$.
To verify SA3, assume, by way of contradiction,
that for some minimal $n>0$ we have $|\si_2^n(\ell)|<|\ell|$. Then by
the Central Strip Lemma (which applies because of SA2), the leaf
$\si_2^{n-1}(\ell)\subset S(\ell)$ is vertical. The fact that $m$ is
the minor of $\lam$ now implies that $\si_2^{n-1}(\ell)$ must be a
vertical leaf in $\ol{S(\ell)\sm S(m)}$ which in turn implies that
$m\le \si_2^n(\ell)\le \ell$, a contradiction.
Thus, SA3 holds for $\ell$, and $\ell$ is a minor.

To prove the second claim of the lemma notice that by the Central Strip
Lemma, no eventual image of $\widehat m$ is behind $\widehat m$. Together with the
assumptions of the lemma on $\ell$ it implies that no eventual image of
$\ell$ separates $\ell$ from $\widehat m$.
By the above, $\ell$ is a minor.
\end{proof}

\section{Coexistence and tuning}
We start with a general property of minor leaf laminations.
A chord $\ell$ is said to \emph{coexist} with a lamination $\lam$ if no leaf of $\lam$ is linked with $\ell$.

\begin{lemma}
  \label{l:vert-vs-hor}
  Let $m$ be a minor, and $\lam(m)$ the corresponding minor leaf lamination.
If $Q\subset S(m)$ is a collapsing quadrilateral whose vertical edges coexist with $\lam(m)$,
  then $Q$ is contained in the critical gap of $\lam(m)$.
\end{lemma}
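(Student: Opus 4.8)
The plan is to argue by contradiction, assuming that the collapsing quadrilateral $Q\subset S(m)$ has vertical edges coexisting with $\lam(m)$ but $Q$ is not contained in the critical gap $C(m)$ of $\lam(m)$. First I would recall from Definition~\ref{d:lamm} that $C(m)$ is the central set of $\lam(m)$ lying between the two majors $\pm M$ (the vertical edges of $S(m)$), so $C(m)\subset S(m)$ and every leaf of $\lam(m)$ inside $S(m)$ is an edge of $C(m)$ or lies behind $m$ (by SA4 / Lemma~\ref{l:nodrop}, no forward image of $m$ enters $S(m)\sm(M\cup -M)$). Since the two vertical edges of $Q$ coexist with $\lam(m)$ and $Q\subset S(m)$, they do not cross $\pm M$; hence each vertical edge of $Q$ either lies in $\ol{C(m)}$ or lies in $\ol{D(m)}$ (behind $m$) or, if $C(m)$ is finite, inside one of the finite gaps/between leaves of $\lam(m)$ that fill out $S(m)\sm\mathrm{int}(C(m))$. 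The key point is that a vertical chord in $S(m)$ strictly longer than the width considerations allow must ``straddle'' the center; I would use that $Q$ is a critical quadrilateral (it contains the center of $\disk$, being semi-critical in the sense defined after Lemma~\ref{l:mi-li}), so $Q$ meets both vertical edges of $C(m)$ unless $Q\subseteq C(m)$.

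The heart of the argument is then: if $Q\not\subseteq C(m)$, at least one vertical edge $V$ of $Q$ must lie strictly on the far side of a major, i.e., $V\le m$ while $C(m)$ is on the $S(m)$ side; but $V$ is vertical (it separates $\ol 0$ from $\ol{\frac12}$), so $V$ cannot lie behind $m$ — leaves behind a minor are minor-like, hence horizontal by Lemma~\ref{l:S-vert} applied to their own central strips, and more directly $\ol 0\notin\ol{H(m)}$ forces any chord $\le m$ to be horizontal. This contradiction shows that the only possibility consistent with $V$ being vertical and coexisting with $\lam(m)$ is $V\subseteq\ol{C(m)}$; the same for the other vertical edge of $Q$, and since both vertical edges of the critical quadrilateral $Q$ lie in $\ol{C(m)}$ and $Q$ is their convex hull together with the two (critical) horizontal edges, $Q\subseteq C(m)$. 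I expect the main obstacle to be handling the case where $C(m)$ is a finite collapsing quadrilateral or a critical leaf rather than an infinite gap: there one must rule out that a vertical edge of $Q$ falls into one of the finitely many small gaps of $\lam(m)$ inside $S(m)$ without crossing any leaf — this is where coexistence is used essentially, together with Lemma~\ref{l:1stret}(2) (eventual images of any non-edge chord in such a gap become vertical, producing a leaf of $\lam(m)$ longer than $M$, contradicting that $\pm M$ are majors) to force $V$ to actually be an edge of $C(m)$.

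To organize this cleanly I would proceed: (i) record that $Q$ is semi-critical, so contains the center, hence $Q$ and $C(m)$ are nested or overlap across the center; (ii) show the vertical edges of $Q$ cannot lie behind $m$ (they are vertical, $m$ is a minor, $\ol0\notin\ol{H(m)}$); (iii) show they cannot lie in any gap of $\lam(m)$ strictly between $C(m)$ and $\pm M$, using coexistence plus Lemma~\ref{l:1stret}(2) to get a contradiction with $\pm M$ being majors; (iv) conclude each vertical edge of $Q$ lies in $\ol{C(m)}$, hence $Q\subseteq C(m)$ since $Q=\ch$ of its vertices and $C(m)$ is the convex central set. Steps (ii)–(iii) are the substantive ones; (i) and (iv) are bookkeeping about convex position inside $S(m)$.
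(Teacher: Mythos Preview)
Your plan has the right endpoint but the geometry is tangled in two places, and the paper's argument is much shorter.

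First, step (ii) is vacuous. By definition of ``minor-like'', $D(m)$ is disjoint from the interior of $S(m)$; since $Q\subset S(m)$, a vertical edge $V$ of $Q$ can never lie behind $m$. So there is nothing to rule out there, and the sentence ``$V\le m$ while $C(m)$ is on the $S(m)$ side'' describes a configuration that cannot occur. What actually needs to be excluded is that $V$ sits in one of the two lune-shaped regions of $S(m)\sm C(m)$, bounded by a horizontal edge of $C(m)$ and a circular arc of $S(m)\cap\uc$.

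Second, your tool for step (iii) is wrong. The gaps in $S(m)\sm C(m)$ need not be periodic, so Lemma~\ref{l:1stret}(2) does not apply. The correct (and one-line) observation is that any such gap has all its vertices on a single arc of $S(m)\cap\uc$, hence contains only horizontal chords; a vertical $V$ simply cannot live there. With that fix your outline does go through, but it is still a case analysis.

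The paper bypasses all of this. Instead of locating the vertical edges, it shows directly that the \emph{horizontal} edges of $Q$ also coexist with $\lam(m)$: if a leaf $\ell\in\lam(m)$ crossed a horizontal edge $\ell_h$ of $Q$, then (since $\ell$ cannot cross the vertical edges by hypothesis) $\ell$ must also cross $-\ell_h$, forcing $\ell$ to be a vertical leaf in $S(m)$ strictly longer than the majors --- impossible. Once all four edges of $Q$ coexist and $\lam(m)$ has no critical leaf (here one uses that $Q\subset S(m)$ forces $m$ to be non-degenerate), $Q$ lies in a single gap; since $Q$ contains the center of $\disk$, that gap is $C(m)$.
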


\begin{proof}
If a horizonal edge $\ell_h$ of $Q$ and a leaf $\ell\in\lam(m)$ cross
in $\disk$, then, since $\ell$ cannot cross the vertical edges of $Q$,
$\ell$ must cross $-\ell_h$. Thus, $\ell$ is a vertical leaf of
$\lam(m)$ in $S(m)$, a contradiction. Hence horizonal edges of $Q$ also
coexist with $\lam(m)$. Since $m$ is non-degenerate, $\lam(m)$ has no
critical leaves. Thus $Q$ is contained in the critical gap of
$\lam(m)$.
\end{proof}

Coexistence of chords turns out to be stable under $\si_2$.


\begin{lemma}
  \label{l:com-stab}
  Suppose that a chord $\ell$ coexists with a quadratic lamination $\lam$.
  Then $\si_2(\ell)$ also coexists with $\lam$.
\end{lemma}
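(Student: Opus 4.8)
The plan is to deduce coexistence of $\si_2(\ell)$ with $\lam$ from coexistence of $\ell$ (and of $-\ell$) by combining backward invariance with the sibling property, via a description of how $\si_2$ carries the regions cut out by $\ell\cup(-\ell)$.

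First I would dispose of degenerate cases: if $\ell$ is degenerate or critical, then $\si_2(\ell)$ is a single point of $\uc$, which is linked with no leaf, so it coexists with $\lam$ automatically. So assume $0<|\ell|<\frac12$; then $\si_2(\ell)$ is non-degenerate and $\ell$, $-\ell$ are disjoint. I would next observe that $-\ell$ also coexists with $\lam$: if some leaf $g\in\lam$ were linked with $-\ell$, then, by the sibling property, $-g\in\lam$ would be linked with $-(-\ell)=\ell$, contradicting coexistence of $\ell$. Hence no leaf of $\lam$ is linked with $\ell$ or with $-\ell$. Now $\ell$ and $-\ell$ split $\ol\disk$ into the closed strip $S$ between them together with two closed caps; since $|\ell|<\frac12$, the center lies off the cap side of $\ell$, so $S$ (which contains the center) lies on the non-cap side, and the caps are exactly $O=\ol{D(\ell)}$ and $O'=\ol{D(-\ell)}=-O$, with $\ol\disk=O\cup S\cup O'$ (disjoint interiors). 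The open chord of any leaf of $\lam$ is connected and misses $\ell\cup(-\ell)$ inside $\disk$, so every leaf of $\lam$ is contained in one of $O$, $O'$, $S$.

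The next step is the bookkeeping for the images of these regions. Using $|H(\ell)|=|H(-\ell)|=|\ell|$ and that each of the two boundary arcs of $S$ has length $\frac12-|\ell|$, one checks that $\si_2$ maps $H(\ell)$ and (each boundary arc of $S$) onto \emph{complementary} arcs of $\si_2(\ell)$; together with $\si_2(-z)=\si_2(z)$ (which gives $\si_2(O')=\si_2(O)$ and $\si_2(-S)=\si_2(S)$), this shows that $\si_2(O)=\si_2(O')$ is one closed side of $\si_2(\ell)$ and $\si_2(S)$ is the other closed side. Consequently, for every leaf $g\in\lam$ the chord $\si_2(g)$ lies in a single closed side of $\si_2(\ell)$ and therefore is not linked with $\si_2(\ell)$. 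Finally, by backward invariance every leaf $h\in\lam$ is of the form $h=\si_2(g)$ with $g\in\lam$, so no leaf of $\lam$ is linked with $\si_2(\ell)$; that is, $\si_2(\ell)$ coexists with $\lam$.

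The only delicate point is the region computation in the third paragraph — confirming that the images of the two caps and of the strip are precisely the two closed sides of $\si_2(\ell)$ (in particular that $O$ is really the cap $\ol{D(\ell)}$, not the side containing the center). An alternative that avoids this is a purely combinatorial endpoint argument: writing $\si_2(\ell)=\ol\alpha\ol\beta$, the four $\si_2$-preimages of $\{\ol\alpha,\ol\beta\}$ cut $\uc$ into arcs $C_1,A_1,C_2,A_2$ with $C_i=H(\pm\ell)$, $A_i$ the boundary arcs of $S$, and $\ell$ (resp.\ $-\ell$) separating $C_1$ from $A_1\cup C_2\cup A_2$ (resp.\ $C_2$ from the rest); if a leaf $h=\si_2(g)$ is linked with $\si_2(\ell)$, then the pullback $g$ (or $-g$, whichever endpoint lies in which $A_i$) joins some $A_i$ to some $C_j$ across $\ell$ or across $-\ell$, again a contradiction.
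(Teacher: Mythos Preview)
Your proof is correct. Your main argument runs in the forward direction: you partition $\ol\disk$ by $\ell$ and $-\ell$ into the two caps $O,O'$ and the strip $S$, observe that every leaf of $\lam$ lies in one of these (using the sibling property to get coexistence of $-\ell$), check that $\si_2$ sends the circular boundaries of $O,O'$ to one closed arc bounded by the endpoints of $\si_2(\ell)$ and those of $S$ to the complementary arc, and then invoke backward invariance to conclude that every leaf of $\lam$ is the $\si_2$-image of something on one side of $\si_2(\ell)$. The paper instead argues by contradiction in the contrapositive direction: assuming $\si_2(\ell)$ is linked with a leaf $ab\in\lam$, it notes that the two $\si_2$-preimages of $a$ lie in opposite $\pm\ell$-arcs and the two preimages of $b$ in the remaining opposite arcs, so any pullback of $ab$ must cross $\ell$ or $-\ell$; backward invariance and the sibling property then give the contradiction. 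Your ``alternative'' at the end is exactly this argument. The two routes are duals of each other, resting on the same four-arc decomposition and the same invariance properties; the paper's version is a bit shorter because it avoids the explicit region-image bookkeeping.
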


\begin{proof}
Assume the contrary: $\si_2(\ell)$ is linked with some leaf $ab$ of $\lam$.
The chords $\pm\ell$ divide the circle $\uc$ into four arcs, which will be called the $\pm\ell$-arcs.
The two $\si_2$-preimages of $a$ are in the opposite (=not adjacent) $\pm\ell$-arcs.
Similarly, the two preimages of $b$ are in the remaining opposite $\pm\ell$-arcs.
It follows that any pullback of $ab$ in $\lam$ 
crosses $\ell$ or $-\ell$, a contradiction.
\end{proof}

Two laminations $\lam_1$, $\lam_2$ are said to \emph{coexist} if no
leaves $\ell_1\in \lam_1$ and $\ell_2\in \lam_2$ cross. Thus,
coexistence of quadratic laminations is a symmetric relation.

\begin{lemma}
  \label{l:tune-m1-le-m}
Let $m_1$ be a 
minor that is an offspring of a 
non-degenerate minor $m_0$. If $\lam(m_1)$ coexists with some
quadratic lamination $\lam\ne \lam(m_1)$ with minor $m$, then either
$m$ is an endpoint of $m_1$ and $\lam$ is the corresponding lamination
with a critical leaf, or $\lam$ is the q-lamination associated to
$\lam(m_1)$, or $m_1<m_0\le m$.
\end{lemma}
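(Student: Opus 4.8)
The plan is to analyze how the critical set $C(m)$ of $\lam$ sits relative to the strip $S(m_1)$, using coexistence to force $C(m)$ either to meet $S(m_1)$ in a very controlled way or to lie entirely behind $m_1$. First I would record the basic geometry: since $m_1$ is an offspring of $m_0$, Theorem A gives that $m_1$ is a minor, hence minor-like, so $S(m_1)$ is a well-defined central strip bounded by the vertical majors $\pm M_1$ of $\lam(m_1)$, and $m_1 \le m_0$ (by the definition of offspring, $m_1 < m_0$ unless $m_1 = m_0$, but offsprings lie strictly behind, so $m_1 < m_0$). The majors $\pm M_1$ are leaves of $\lam(m_1)$; by coexistence no leaf of $\lam$ crosses them, so every leaf of $\lam$ either lies in $\ol{S(m_1)}$ or in the closure of one of the two components of $\ol\disk \setminus (M_1 \cup -M_1)$ not containing the center. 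In particular $C(m)$, being a convex subset of $\ol\disk$ whose boundary leaves coexist with $\pm M_1$, is contained in exactly one of these three regions.

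Next I would split into cases according to which region contains $C(m)$. If $C(m)$ lies in one of the two non-central components, then $C(m)$ is behind one of the vertical edges $\pm M_1$, so the major of $\lam$ (an edge of $C(m)$, or $C(m)$ itself if it is a leaf) lies behind $\pm M_1$; applying $\si_2$ and using $\si_2(M_1) = \si_2(-M_1) = m_1$ together with Lemma~\ref{l:S-forw}, one gets $m = \si_2(\text{major of }\lam) \le m_1$, and combined with $m_1 < m_0$ this yields (after checking $m$ is genuinely behind $m_1$ or equals an endpoint, where the boundary case $m$ an endpoint of $m_1$ is exactly the exceptional ``lamination with a critical leaf'' alternative) the conclusion $m_1 < m_0 \le m$ — here I should be careful: the desired alternative is $m_1 < m_0 \le m$, so really this case should instead be shown to force $m$ to lie \emph{in front of} $m_1$; I would reconcile this by noting that if $C(m) \subset S(m_1)$ then $C(m)$ coexisting with $\lam(m_1)$ and $\lam(m_1)$ having critical set $C(m_1)$ forces, via Lemma~\ref{l:vert-vs-hor}, that $C(m)$ is contained in $C(m_1)$, so $\lam(m_1) \subseteq \lam$. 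Since $\lam \ne \lam(m_1)$ and $\lam(m_1)$ is by Definition~\ref{d:lamm} the largest lamination with minor $m_1$, the only way to have $\lam \supsetneq \lam(m_1)$ is that $m$ is an endpoint of $m_1$ and $\lam$ carries a critical leaf, or $\lam$ is the associated q-lamination $\hlam(m_1)$ (which is the \emph{smaller} lamination with minor $m_1$, so in fact $\lam \subsetneq \lam(m_1)$ — this is the second listed alternative). The genuinely remaining case is $C(m)$ in a non-central component, which as computed gives the major of $\lam$ behind $m_1$, hence (applying $\si_2$) $m \le m_1 < m_0$; but then $m$ is behind $m_1$, contradicting nothing yet — so I must push further: if $m < m_1$ strictly then $\lam(m)$ and $\lam(m_1)$ would have crossing or nested critical structure forcing $\lam(m_1)$ to tune $\lam(m)$, and unwinding which of $m_1 < m_0 \le m$ actually holds requires the observation that $m_0$ is non-degenerate and $m_1$ is its offspring, so $m_1$ is eventually mapped onto $m_0$; an eventual image of $m$ would then relate to $m_0$, and SA4 for $m$ (Lemma~\ref{l:nodrop}) rules out an eventual image of $m$ being behind $m$, pinning down the alternative.

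I expect the main obstacle to be the careful bookkeeping of which of the three alternatives occurs, and in particular disentangling the ``$C(m) \subset S(m_1)$'' case: there Lemma~\ref{l:vert-vs-hor} is the right tool, but one must verify its hypothesis that the vertical edges of a collapsing quadrilateral inside $S(m_1)$ coexist with $\lam(m_1)$ — which needs $C(m)$ to actually be (contained in) such a quadrilateral or a critical leaf, and then identify the resulting lamination $\lam$ with either the critical-leaf tuning of $\lam(m_1)$ at an endpoint $m$ of $m_1$, or with $\hlam(m_1)$, via Theorem~\ref{t:major}'s classification of the at-most-two or at-most-four laminations sharing a given minor. The case analysis on the position of $C(m)$ relative to $S(m_1)$ is clean; matching each geometric case to the precise phrasing of the three alternatives in the statement is the delicate part, and I would organize the write-up around the trichotomy ``$C(m)$ central (i.e. $\subseteq S(m_1)$), versus $C(m)$ strictly behind one of $\pm M_1$'', handling the former by Lemma~\ref{l:vert-vs-hor} plus Theorem~\ref{t:major}, and the latter by pushing forward under $\si_2$ and invoking Lemma~\ref{l:nodrop} to get $m_1 < m_0 \le m$.
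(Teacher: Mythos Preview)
Your trichotomy on the position of $C(m)$ relative to $S(m_1)$ is flawed at the outset: the central set $C(m)$ of \emph{any} quadratic lamination contains the center of $\disk$, so it can never lie in one of the two non-central components of $\ol\disk\sm(M_1\cup(-M_1))$. That case is vacuous, and the computation you sketch there (pushing the major of $\lam$ forward to get $m\le m_1$) is built on an impossible configuration. What actually happens is that either $C(m)\subset \ol{S(m_1)}$, or $C(m)$ has $\pm M_1$ as diagonals (so $\ol{S(m_1)}\subset C(m)$); you never set up this dichotomy, and it is the second branch that leads to the main alternative $m_1<m_0\le m$.

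Even after one gets $m_1\le m$ (which the paper obtains cleanly by first proving $\lam\subsetneqq\lam(m_1)$ via Theorem~\ref{t:major}, using that $m_1$ is non-periodic and $\lam(m_1)$ is the \emph{maximal} lamination with that minor), the serious part of the lemma is the step from $m_1\le m$ to $m_0\le m$. Your proposal only gestures at this (``an eventual image of $m$ would then relate to $m_0$, and SA4 for $m$ \dots''), but SA4 for $m$ alone is not enough. The paper's argument here is substantial: it splits into the cases $m_1\in\lam$ versus $m_1\notin\lam$, in each case locates $m_1$ as an edge or diagonal of a periodic quadratic Fatou gap $\si_2(H)$ of $\lam$ with unique periodic edge $m$, uses that $m_1$ (hence also its eventual image $m_0$) maps forward to $m$, and then rules out $m<m_0$ by observing that this would force an eventual image of the minor $m_0$ to land strictly behind $m_0$, contradicting Lemma~\ref{l:nodrop}. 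None of this Fatou-gap analysis appears in your plan, and without it the conclusion $m_0\le m$ is unreached.
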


\begin{proof}
We assume from the very beginning that $m$ is not an endpoint of $m_1$.
It is easy to see that $m_1$ is non-periodic since $m_1$ is an
offspring of $m_0$. Hence by Theorem \ref{t:major} the lamination
$\lam(m_1)$ contains the critical quadrilateral $Q(m_1)$, and
$\lam(m_1)$ is obtained from the q-lamination $\hlam(m_1)$ (with finite
gaps and all leaves being non-isolated) by inserting vertical edges of
$Q(m_1)$ in its central gap $C(\hlam(m_1))$ (in this way one adds
$Q(m_1)$ to $\hlam(m_1)$) and then pulling them back within
$\hlam(m_1)$. The only two laminations that tune $\lam(m_1)$ are the
ones whose minors are endpoints of $m_1$. Hence, by our assumption, it
follows that $\lam$ cannot have any leaves that do not belong to
$\lam(m_1)$. In other words, $\lam\subsetneqq \lam(m_1)$. Since the
majors $\pm M$ of $\lam$ are leaves of $\lam(m_1)$, then they are
located so that $S(m)\supset S(m_1)$ and hence $m_1\le m$.

Consider the case when $m_1\in \lam$. If the majors $\pm M_1$ belong to
$\lam$, it follows that $m=m_1$. Since $m=m_1$ is not periodic, the
central gap of $\lam$ must be finite. Since by Theorem \ref{t:major}
the horizontal edges of $Q(m_1)$ are limits of leaves of $\hlam(m_1)\subset \lam(m_1)$,
$Q(m_1)$ must be a gap of $\lam$, and it follows that $\lam=\lam(m_1)$. Suppose that
$m_1\in \lam$ but $\pm M_1$ do not belong to $\lam$. Let $\widehat C$
be the critical set of the q-lamination $\hlam$ associated to $m_1$.
Then the horizontal edges of $Q(m_1)$ must be edges of $\widehat{C}$ and leaves of $\lam$.
Indeed, some leaves of $\lam$ must map to $m_1$, and the vertical edges of $Q(m_1)$ do not belong to $\lam$.
Hence the critical set of $\lam$ is a gap $H$ containing the horizontal edges of $Q(m_1)$ in the boundary.
Consider two cases.

If $H$ is finite, then the fact that $\pm M_1$ do not
belong to $\lam$ and the fact that edges of $\widehat C$ are approached
from the outside of $\widehat C$ by leaves of $\hlam\subset \lam(m_1)$
imply that $H\subset \widehat C$ is different from $Q(m_1)$.
Since no edges of $H$ can cross $\pm M_1$ and the images of the edges of $H$ must be edges of
$\si_2(\widehat C)$ (otherwise some of their eventual images will cross), we have
$H=\widehat C$ and, hence, $\lam=\hlam$ is the q-lamination associated to $m_1$.

If $H$ is infinite, then $H$ is a quadratic Fatou gap, and $m_1$ is an
edge of its image; it is well known that then $H$ is periodic of
period, say, $n$. It is known that there is a unique periodic edge $M$
of $H$, and it is of period $n$. Moreover, $M$ and its sibling $-M$ are
the majors of the unique lamination that has $H$ as its gap; this
lamination is in fact a q-lamination and, evidently, it has to coincide
with $\lam$ so that $m=\si_2(M)$ is an edge (actually, unique periodic
edge) of $\si_2(H)$. It is known that all edges of $H$ eventually map
to $m$ (it is a consequence of the Central Strip Lemma), in particular
so does $m_1$ (which is an edge of $\si_2(H)$) and $m_0$ (which is an
eventual image of $m_1$).

The Central Strip Lemma also imposes restrictions on possible locations of iterated images of $H$.
Namely, the entire gap $\si_2(H)$ is located under $m$
while all other iterated images of $H$ are located on the other side of $m$.
Now, $m_0$ is a minor of some lamination and an eventual image of $m_1$.
Since $m$ is an eventual image of $m_1$, it follows that $m$ is an eventual image of $m_0$.
If $m_0$ is an edge of some iterated image of $H$ different from $\si_2(H)$, then
 $m_1\le m_0$ implies $m\le m_0$ (recall that both $m_1$ and $m$ are edges of $\si_2(H)$).
Since $m_0$ is eventually mapped to $m\le m_0$, we must have $m=m_0$, and we are done in this case.
Thus we may assume that $m_0$ is an edge of $\si_2(H)$.
Since the only edge $\ell$ of $H$ such that $m_1<\ell$ is the edge $m$, the fact that $m_1<m_0$ implies again that $m_0=m$.
All that covers the ``trivial'' cases included in the theorem.

Now, if $m_1\notin \lam$, then $m_1$ is a diagonal of a gap $G$ of $\lam$ whose edges are leaves of $\lam(m_1)$.
Since $m_1$ is approached by uncountably many leaves of $\lam(m_1)$ from at least one side,
$G\cap \uc$ is infinite and uncountable (in particular, $G$ is not an iterated pullback of a caterpillar gap).
Also, $G$ is not an iterated pullback of a periodic Siegel gap as otherwise $m_1$, being a diagonal of $G$, will
have some eventual images that cross. Since $G$ is infinite, it is
eventually precritical and an image $\si_2^i(G)=H$ of $G$ is a
periodic critical quadratic Fatou gap containing as a diagonal the leaf $\si_2^i(m_1)$.
As in the previous paragraph, there is a unique periodic edge $M$ of $H$, and it is of period $n$. Moreover, $M$ and
its sibling $-M$ are the majors of a unique lamination that has $H$ as its gap;
 this lamination is in fact a q-lamination and, evidently, it coincides with $\lam$ so that $m=\si_2(M)$.

The majors $\pm M_1$ coexist with $\lam$ and cannot cross edges
of $H$. Hence $m_1=\si_2(M_1)$ is a diagonal or an edge of $\si_2(H)$.
Since $m_1\le m$ and $m_1\le m_0$, we have that
either $m_0\le m$, or $m\le m_0, m\ne m_0$. By way of contradiction
assume that $m\le m_0, m\ne m_0$. However, then under some iteration of
$\si_2$ the leaf $m_0$, which is an eventual image of $m_1$, will be
mapped back to $\si_2(H)$ so that for the appropriate eventual image
$\si_2^j(m_0)$ of $m_0$ we have $\si_2^j(m_0)\le m\le m_0$, which is
only possible for the minor $m_0$ if in fact $m_0=m$, a contradiction.
Thus, $m_0\le m$, as desired.
\end{proof}

The next theorem describes some cases when one lamination tunes another one.
Recall that, by Definition~\ref{d:lamm}, the central gap of a lamination $\lam(m_0)$ is either a collapsing quadrilateral
or an infinite gap.

\begin{theorem}
  \label{t:com-pb}
Given minors $m_0$, $m_1$ and $m$, the following statements hold.

\begin{enumerate}

\item If $\lam(m_1)$ has majors $\pm M_1$ contained in the central gap of $\lam(m_0)$, then $\lam(m_0)\subset \lam(m_1)$;
if $m_1\ne m_0$, then $\lam(m_1)\ne \lam(m_0)$.

\item If $m$, $m_0$ and $m_1$ are non-degenerate minors such that $m_1$ is a child of $m_0$,
the lamination $\lam(m_1)$ coexists with $\lam(m)$,
and $m$ is neither $m_1$ nor an endpoint of $m_1$, then $\lam(m)\subset \lam(m_0)$.

\item If $m_1$ is an offspring of $m_0$ and $\lam(m)\subsetneqq \lam(m_1)$, then $\lam(m)\subsetneqq \lam(m_0)$.

\end{enumerate}

\end{theorem}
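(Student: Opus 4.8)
The three parts hang together once one reads statement (1) as a \emph{critical-set refinement criterion}: whether $\lam(m_0)\subset\lam(m_1)$ is governed entirely by the position of the critical set of $\lam(m_1)$ relative to that of $\lam(m_0)$. The plan is to prove (1) first and then feed it into (2) and (3). For (1): the hypothesis $\pm M_1\subset C(m_0)$ says the whole critical set of $\lam(m_1)$ — the critical leaf, or the collapsing quadrilateral $Q(m_1)=\ch(M_1\cup(-M_1))$, or the infinite Fatou gap, which in every case lies between $\pm M_1$ and hence in the convex set $C(m_0)$ — coexists with $\lam(m_0)$ and sits in its critical gap. I would then run Thurston's grand-orbit construction: adjoin to $\lam(m_0)$ this critical object, placed in $C(m_0)$, together with all of its $\si_2$-pullbacks, each inserted at the corresponding backward step into the appropriate preimage gap of $C(m_0)$, and take the closure. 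Forward images of its edges coexist with $\lam(m_0)$ by Lemma~\ref{l:com-stab}; that the pullbacks create no crossings, and that the closure is again a lamination, follows from the gap classification (Corollary~\ref{t:finOcri}, Proposition~\ref{p:cov-prop}) exactly as in \cite{thu85}. The result is a quadratic lamination $\lam^+\supseteq\lam(m_0)$ whose critical set is that of $\lam(m_1)$; since a quadratic lamination is determined by its critical set (Theorems~\ref{t:major} and~\ref{t:mpb}), $\lam^+=\lam(m_1)$, so $\lam(m_0)\subseteq\lam(m_1)$. The last clause holds because a lamination determines its majors, hence its minor $\si_2(M)$: $\lam(m_1)=\lam(m_0)$ forces $m_1=m_0$.

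From (1) I would extract a corollary for use below: if $m_1$ is a child of $m_0$ and $m_1\ne m_0$, then $m_1<m_0$ (Lemma~\ref{l:techno}(2)), $m_1$ is non-periodic (a periodic child would be an eventual image of $m_0$ lying behind $m_0$, hence equal to $m_0$, by Lemma~\ref{l:nodrop}), and $\lam(m_0)\subsetneqq\lam(m_1)$. The last point: the horizontal edges of $Q(m_1)$ are leaves of $\lam(m_0)$ (Proposition~\ref{p:der-min}), and since $Q(m_1)$ is semicritical, a short positional argument forces $Q(m_1)$ into the unique semicritical gap $C(m_0)$, so $\pm M_1\subset C(m_0)$ and (1) applies. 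Iterating along an offspring chain $m_1\to m'\to\dots\to m_0$ extends the conclusion to the case where $m_1$ is an offspring of $m_0$: $m_1<m_0$ and $\lam(m_0)\subsetneqq\lam(m_1)$.

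For (2): since $m\ne m_1$, $\lam(m)\ne\lam(m_1)$, so Lemma~\ref{l:tune-m1-le-m} gives a trichotomy. The ``$m$ an endpoint of $m_1$'' case is excluded by hypothesis. In the ``$\lam(m)$ the q-lamination associated with $\lam(m_1)$'' case, $\lam(m)\subsetneqq\lam(m_1)$, and since a child is an offspring of $m_0$ this reduces (when $m\ne m_0$; otherwise the conclusion is immediate) to (3). In the case $m_1<m_0\le m$: if $m_0=m$ there is nothing to prove, and if $m_0<m$ then $\pm M_0=\pm\si_2^{\,n-1}(m_1)$ (Proposition~\ref{p:der-min}) is a $\si_2$-image of the minor $m_1\in\lam(m_1)$, hence $\pm M_0\in\lam(m_1)$ and therefore coexists with $\lam(m)$; moreover $m_0<m$ forces $|M_0|>|M|$, so $Q(m_0)\subset S(m)$ is a collapsing quadrilateral with vertical edges coexisting with $\lam(m)$. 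By Lemma~\ref{l:vert-vs-hor}, $Q(m_0)\subset C(m)$, and applying (1) with $(m,m_0)$ in place of $(m_0,m_1)$ yields $\lam(m)\subset\lam(m_0)$. For (3): by the corollary $\lam(m_0)\subsetneqq\lam(m_1)$, and by the hypothesis together with (1) the critical set of $\lam(m_1)$ also lies in $C(m)$, so $\lam(m)\subsetneqq\lam(m_1)$. Hence $\lam(m)$ and $\lam(m_0)$ coexist (both lie inside $\lam(m_1)$) and their critical gaps are comparable; it then remains to exclude $C(m)\subsetneqq C(m_0)$, after which (1) gives $\lam(m)\subseteq\lam(m_0)$, which is strict whenever $m\ne m_0$.

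The main obstacle is threefold: verifying that Thurston's grand-orbit construction in (1) produces no crossings and a genuine lamination with the advertised critical set and minor; the positional step in the corollary that, once the horizontal edges of $Q(m_1)$ lie in $\lam(m_0)$, all of $Q(m_1)$ lies in $C(m_0)$; and, in (3), excluding $C(m)\subsetneqq C(m_0)$, which I would settle using $m_0=\si_2^{\,n}(m_1)$ with $m_1$ interior to $\si_2(C(m_0))\cap\si_2(C(m))$ together with the Central Strip Lemma's control on the positions of the iterated $\si_2$-images of $m$ and $m_0$. All three are ``classification''-style arguments in the spirit of \cite{thu85}; the delicate point throughout is tracking which preimage gap receives which pulled-back critical object.
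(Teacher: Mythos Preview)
Your overall structure is reasonable, but the execution diverges from the paper in ways that leave real gaps in parts (1) and (3).

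For (1), the paper's argument is much more direct than your grand-orbit construction. It splits on whether $C(m_0)$ is a collapsing quadrilateral (then $\pm M_1\subset C(m_0)$ forces $m_1=m_0$) or an infinite periodic Fatou gap. In the latter case it shows directly that the periodic edge $M_0$ of $C(m_0)$ lies in $\lam(m_1)$: take iterated pullbacks of $M_1$ inside $C(m_0)$, each chosen to separate the previous one from $M_0$; these are $m_1$-pullbacks by definition, hence in $\lam(m_1)$, and they converge to $M_0$. Then all edges of $C(m_0)$, and by the same mechanism all leaves of $\lam(m_0)$, are $m_1$-pullbacks of $M_0$ and hence in $\lam(m_1)$. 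Your route --- build $\lam^+\supseteq\lam(m_0)$ and then identify $\lam^+$ with $\lam(m_1)$ --- needs a uniqueness statement that Theorems~\ref{t:major} and~\ref{t:mpb} do not quite provide: several laminations may share a minor, and you have not argued that your $\lam^+$ is the specific choice $\lam(m_1)$ (or even contained in it).

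For (2), your treatment of the case $m_1<m_0\le m$ is essentially the paper's. Your extra branch for ``$\lam(m)$ is the q-lamination $\hlam(m_1)$'' is unnecessary: since $m_1$ is non-periodic, $\hlam(m_1)$ also has minor $m_1$, so $\lam(m)=\hlam(m_1)$ would force $m=m_1$, which is excluded. The paper simply notes that all ``trivial'' alternatives of Lemma~\ref{l:tune-m1-le-m} are ruled out by hypothesis and moves on.

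For (3), there is a genuine gap. The paper does \emph{not} argue directly; it decomposes the offspring into a chain of children $m_1<m_{(n-1)/n}<\dots<m_{1/n}<m_0$ (Theorem~A) and applies (2) at each step: from $\lam(m)\subsetneqq\lam(m_{(i+1)/n})$ and the fact that $m_{(i+1)/n}$ is a child of $m_{i/n}$, part (2) yields $\lam(m)\subset\lam(m_{i/n})$. Your direct approach stalls exactly where you flag it: ruling out $C(m)\subsetneqq C(m_0)$. Coexistence of $\lam(m)$ and $\lam(m_0)$ (both inside $\lam(m_1)$) does not by itself order their central gaps, and your suggested fix via $m_0=\si_2^n(m_1)$ and the Central Strip Lemma does not obviously go through, since you do not know $m_1\in\lam(m)$ (you only have $\lam(m)\subset\lam(m_1)$), so you cannot track iterates of $m_1$ within $\lam(m)$. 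The inductive reduction to (2) sidesteps this entirely, because at each child step Lemma~\ref{l:tune-m1-le-m} hands you $m_{i/n}\le m$ outright. Note also that routing a case of (2) through (3), as you do, makes the logical dependence circular unless your direct argument for (3) is made to stand on its own.
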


\begin{proof}
(1) Let the central gap $C(m_0)$ of $\lam(m_0)$ be a collapsing quadrilateral.
Then the fact that $\pm M_1\subset C(m_0)$ implies that $m_1=m_0$ and $\lam(m_1)=\lam(m_0)$.

Let now $C(m_0)$ be an infinite gap.
Then $m_0$ is periodic of the same period as $C(m_0)$.
Let us write $M_0$ for the pullback of $m_0$ that is invariant under the first return map $f$ of $C(m_0)\cap\uc$.
Assume that $M_1$ separates $-M_1$ from $M_0$ (or $M_1=-M_1$ is critical).
Consider iterated pullbacks of $M_1$ chosen so that each next pullback separates the previous pullback from $M_0$.
By definition of $m_1$-pullbacks, all these pullbacks belong to $\lam(m_1)$.
Since these $f$-pullbacks converge to $M_0$, we have $M_0\in \lam(m_1)$.
Similarly, all edges of $C(m_0)$ are in fact $m_1$-pullbacks of $M_0$,
 which implies that all edges of $C(m_0)$ belong to $\lam(m_1)$.
In the same way, it follows from definition of $m_1$-pullbacks that all other leaves of $\lam(m_0)$ are in fact leaves of $\lam(m_1)$. Hence, $\lam(m_0)\subset \lam(m_1)$.

(2) Let $\pm M$, $\pm M_i$ be the majors of $\lam(m)$, $\lam(m_i)$, for
$i=0$, $1$. Since the ``trivial'' cases of Lemma
\ref{l:tune-m1-le-m} do not hold, then by Lemma \ref{l:tune-m1-le-m} we
see that $m_1<m_0\le m$. Thus, $S(m_1)\subset S(m_0)\subset S(m)$.
Since $m_1$ maps to either $M_0$ or $-M_0$ under some iterate of $\si_2$
(see Proposition~\ref{p:der-min}), the majors $\pm M_0$ coexist with $\lam(m)$.
We have $\pm M_0\subset S(m_0)\subset S(m)$, therefore, $\pm M_0$ are contained in the central gap $C(m)$ of $\lam(m)$.
The result now follows from (1).

(3) By Theorem A we may assume that $m_1<m_{(n-1)/n}<\dots<m_{1/n}<m_0$
 where $m_{(i+1)/n}$ is a child of $m_{i/n}$ for $i=0$, $\dots$, $n-1$.
Applying (2) inductively, we see that $\lam(m)\subsetneqq \lam(m_{(n-1)/2})$, $\dots$, $\lam(m)\subsetneqq \lam(m_0)$.
\end{proof}

\section{Almost non-renormalizable minors: proof of Theorem B}

We begin by discussing which minors can be approximated by
offsprings of a given minor. Recall the following fact.

\begin{lemma}[\cite{thu85}, Lemma II.6.10a, part (b)]
  \label{l:nocross-m0}
  Let $m_0$ be a non-degenerate minor.
  If $m\le m_0$ is a minor, then $m_0\in\lam(m)$.
  In particular, $\si_2^n(m)$ cannot cross $m_0$ for $n>0$.
\end{lemma}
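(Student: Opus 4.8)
The plan is to prove the first assertion ($m_0\in\lam(m)$) directly, and then deduce the second assertion as an easy corollary. First I would observe that since $m\le m_0$ and $m_0$ is non-degenerate, we have $|m|\le|m_0|\le\frac13$, and in fact $m$ is minor-like (Lemma~\ref{l:mi-li} if $m$ is non-degenerate; trivially if $m$ is degenerate). The central strip $S(m)$ is then bounded by the majors $\pm M$ of $\lam(m)$, with $|M|\ge\frac13\ge|m_0|$. Since $m\le m_0$, the arc $H(m)\subset\ol{H(m_0)}$, so $S(m)\supset S(m_0)$; in particular $m_0$ does not cross the edges $\pm M$ of $S(m)$ and $m_0$ lies in $\ol{\disk}\sm D(m)$ (it lies behind no leaf strictly shorter, and it is not linked with $m$ because distinct minors never cross, by Thurston's theorem recalled in the introduction).

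Next, I would show $m_0\in\lam(m)$ by exhibiting it as a limit of iterated $m$-pullbacks of iterated $\si_2$-images of $m$, invoking Theorem~\ref{t:mpb} (for $m$ non-degenerate or non-periodic) together with the explicit definition of $\lam(m)$ in the periodic-singleton case. The cleaner route, however, is structural: by Definition~\ref{d:lamm} and Theorem~\ref{t:major}, $\lam(m)$ is the \emph{largest} quadratic lamination with minor $m$ (in case (1)(b) it is the bigger of the two; in the other cases it is unique up to the trivial variations that do not affect which leaves of length $\ge|m|$ appear). On the other hand, $m_0$ being a minor, there is a quadratic lamination $\lam(m_0)$ with minor $m_0$; since $m_0\le$ (is behind) no minor shorter than itself and $|m_0|\ge|m|$ — more precisely, since $m$ is behind $m_0$ and $m$ is a stand-alone minor — one checks using SA1--SA3 that $m$ is itself a leaf of $\lam(m_0)$: indeed $m\le m_0$ with $m$ minor-like, no eventual image of $m$ enters $S(m_0)\sm(\pm M_0)$ (by SA2/SA4 for $m$ together with $S(m)\supset S(m_0)$), so the $m_0$-pullback construction applied to $m$ and its images produces leaves lying in $\lam(m_0)$, hence $\lam(m)\subset\lam(m_0)$. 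But $\lam(m)$ is the maximal lamination with minor $m$, so this forces $\lam(m_0)$ to have minor $m$ — contradiction unless $m=m_0$. Therefore the correct maximality direction is the other one: $m_0$ is a leaf of \emph{some} lamination with minor $m$, namely of $\lam(m)$ itself, because adjoining the $m$-pullback orbit of $m_0$ to a lamination with minor $m$ yields a larger such lamination, and by maximality of $\lam(m)$ this orbit is already present.

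Concretely, here is the step I would actually write: let $\lam'$ be any quadratic lamination with minor $m$ (it exists, $m$ being a stand-alone minor). Form $\lam''$ by adding to $\lam'$ the chord $m_0$ together with all its iterated $m$-pullbacks (as in Definition~\ref{d:mp}) and all forward images. One verifies that $\lam''$ is again a quadratic invariant lamination with the same majors $\pm M$, hence the same minor $m$: forward and backward invariance and the sibling property for the newly added leaves follow from the pullback construction, and the new leaves never cross old ones because $m_0$ does not cross any minor and $m_0$ lies behind $m_0$-free region relative to $S(m)$. By Theorem~\ref{t:major} (maximality of $\lam(m)$ among laminations with minor $m$, in the precise sense recorded there, using that $m_0\notin$ the exceptional orbit of horizontal edges of $Q(m)$ since $|m_0|\ge|m|$ and $m_0\le m_0$) we get $\lam''\subset\lam(m)$, so $m_0\in\lam(m)$, proving the first claim.

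The second claim is then immediate: $\lam(m)$ is a lamination, so no two of its leaves cross; since both $m_0$ and every $\si_2^n(m)$ ($n>0$, these being forward images of the leaf $m\in\lam(m)$) are leaves of $\lam(m)$, the chord $\si_2^n(m)$ cannot cross $m_0$. The main obstacle I anticipate is the first paragraph's bookkeeping — correctly handling the degenerate and periodic boundary cases for $m$ (where $\lam(m)$ is defined only via the pullback closure, and the ``maximality'' statement of Theorem~\ref{t:major} must be applied in its exact form), and making sure the added pullback orbit of $m_0$ genuinely yields an invariant lamination rather than merely a leaf collection with crossings. Everything else is a direct appeal to Lemma~\ref{l:mi-li}, Theorem~\ref{t:major}, and the No Wandering Triangles machinery already in place.
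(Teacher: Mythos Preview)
The paper does not prove this lemma at all; it is quoted from Thurston~\cite{thu85} without argument. So there is no paper proof to compare against, and your proposal must stand on its own.

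It does not. The second paragraph is exploratory scratchwork (you try one maximality direction, reach a contradiction, then announce the opposite direction must be right) and should simply be deleted. More seriously, in the third paragraph the entire weight of the argument rests on the claim that the enlarged collection $\lam''$ is a lamination, and your justification for ``the new leaves never cross old ones'' is that ``$m_0$ does not cross any minor.'' But the leaves of $\lam'$ are leaves of a \emph{dynamical} lamination with minor $m$, not leaves of $\qml$; almost none of them are minors. Knowing that $m_0$ is unlinked with every leaf of $\qml$ tells you nothing about whether $m_0$ is unlinked with, say, an arbitrary pullback of $m$ inside $\lam'$. This is precisely the substantive content of the lemma, and you have assumed it rather than proved it. The same gap reappears for the forward images $\si_2^k(m_0)$ and the $m$-pullbacks of $m_0$: you give no argument that they coexist with $\lam'$ or with each other beyond the pullback bookkeeping.

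There is a second problem with the maximality step. When $m$ is periodic and non-degenerate (case~(2) of Theorem~\ref{t:major}), $\lam(m)$ is by definition the unique \emph{q-lamination} with minor $m$, not the maximal lamination with minor $m$; one obtains a strictly larger lamination with the same minor by inserting $Q(m)$ into the critical Fatou gap and pulling back. So even if you had shown $\lam''$ is a lamination with minor $m$, you could not conclude $\lam''\subset\lam(m)$ in this case without an extra argument that $m_0$ is not one of the inserted leaves. You flag the degenerate/periodic bookkeeping as an anticipated obstacle, but this is not bookkeeping: it is a structural hole in the maximality strategy.
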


The next lemma elaborates on Lemma \ref{l:find-min}.

\begin{lemma}\label{l:minor-child}
Suppose that $\tilde m<m$ are two minors and $\si_2^n(\tilde m)=m$ for a minimal $n>0$. Then the following holds.
\begin{enumerate}
\item
If no image $\si_2^i(\tilde m)$ for $0<i<n$ is a minor separating $\tilde m$ from $m$, then $\tilde m$ is a child
of $m$ (in particular, $\tilde m\in \lam (m)$).
\item Let $\tilde m=m_0<m_1<\dots<m_{r-1}<m_r=m$ be all images of $\tilde m$ that are minors separating $\tilde m$
from $m$. Then $m_i$ is a child of $m_{i+1}$ for $0\le i\le r-1$.
\end{enumerate}
\end{lemma}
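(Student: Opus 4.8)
The plan is to reduce everything to Lemma~\ref{l:find-min} and Proposition~\ref{p:der-min}, together with the fact (Lemma~\ref{l:nocross-m0}) that a smaller minor $\tilde m\le m$ automatically lies in $\lam(m)$ and the images $\si_2^i(\tilde m)$ cannot cross $m$. First I would fix notation: write $a$ for one of the endpoints of $\tilde m$, so that $a\in H(m)$ and $\si_2^n(a)$ is an endpoint of $m$ (here we use that $\si_2^n(\tilde m)=m$ and, by Lemma~\ref{l:techno}(1) applied to $\tilde m$, that $\si_2^{n-1}(\tilde m)$ is a horizontal edge of $Q(m)$, equivalently a major of $\lam(m)$; this also forces $n$ to be minimal with $\si_2^n(a)$ an endpoint of $m$, since any earlier hitting of an endpoint of $m$ would make $\tilde m$ periodic or would contradict minimality of $n$). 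Then the leaf $\ell$ furnished by Lemma~\ref{l:find-min} — the leaf of $\lam(m)$ with endpoint $a$ whose $(n-1)$st image is a major of $\lam(m)$ — is precisely $\tilde m$, because by Lemma~\ref{l:disj} the iterated images $\si_2^i(\tilde m)$ with $\si_2^i(\tilde m)\le m$, $\si_2^i(\tilde m)\ne m$, are pairwise disjoint, so the pullback of $m$ through $a$ along the prescribed orbit is unique.

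For part (1): under the hypothesis that no $\si_2^i(\tilde m)$ with $0<i<n$ is a minor separating $\tilde m$ from $m$, I claim that in fact \emph{no} iterated image of $\tilde m$ separates $a$ from $m$, so that in Lemma~\ref{l:find-min} we land in the case $\ell'=\ell=\tilde m$. Indeed, any $\si_2^i(\tilde m)$ with $0<i<n$ that separated $a$ from $m$ would satisfy $\si_2^i(\tilde m)<m$; since it is an eventual image of $\tilde m$ and $\tilde m<m$, Lemma~\ref{l:techno}(2) shows it is a minor strictly behind $m$, hence (being an image of $\tilde m$) it is a minor separating $\tilde m$ from $m$, contradicting the hypothesis. (Here one should double-check that ``separating $a$ from $m$'' and ``separating $\tilde m$ from $m$'' amount to the same thing for these leaves, using that $\si_2^i(\tilde m)$ cannot cross $m$ by Lemma~\ref{l:nocross-m0} and cannot equal $m$ for $0<i<n$ by minimality of $n$.) Thus Lemma~\ref{l:find-min} gives that $\ell'=\tilde m$ is a from-$m$ derived minor, and since $\si_2^n(\tilde m)=m$ it is by definition a child of $m$; the inclusion $\tilde m\in\lam(m)$ is built into Definition~\ref{d:derived-min}, or follows again from Lemma~\ref{l:nocross-m0}.

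For part (2): write the list of images of $\tilde m$ that are minors separating $\tilde m$ from $m$ as $\tilde m=m_0<m_1<\dots<m_{r-1}<m_r=m$ (this is a chain because distinct minors that are nested with a common upper bound are totally ordered under $<$, and they are all images of $\tilde m$; finiteness is because the total orbit of $\tilde m$ up to time $n$ is finite). Fix $i$ and apply part (1) with $m_{i+1}$ in place of $m$ and $m_i$ in place of $\tilde m$: by construction $m_i<m_{i+1}$, some iterate of $\si_2$ sends $m_i$ to $m_{i+1}$, and the intermediate images of $m_i$ along that orbit that are minors separating $m_i$ from $m_{i+1}$ are exactly the $m_j$ with $i<j<i+1$ — of which there are none, by maximality of the chosen list. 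So part (1) applies verbatim and yields that $m_i$ is a child of $m_{i+1}$.

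The step I expect to require the most care is the identification, in the proof of part~(1), of the leaf $\ell$ of Lemma~\ref{l:find-min} with $\tilde m$ itself, and the passage between ``separates $a$ from $m$'' and ``is a minor separating $\tilde m$ from $m$''. One has to be careful that an iterated image $\si_2^i(\tilde m)$ might a priori be $\le m$ while sharing an endpoint with $m$, or might be behind $m$ without being a genuine separator; here Lemma~\ref{l:techno}(2) (to upgrade ``$\le m$ and an image of $\tilde m$'' to ``$<m$''), Lemma~\ref{l:nocross-m0} (no crossing with $m$), and Lemma~\ref{l:disj} (pairwise disjointness of the relevant images, which pins down the pullback uniquely) are exactly the tools that make the bookkeeping go through. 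The rest is a direct appeal to Lemma~\ref{l:find-min} and the definition of a child.
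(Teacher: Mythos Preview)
There is a genuine gap in your argument for part~(1). You invoke Lemma~\ref{l:nocross-m0} to conclude that $\tilde m\le m$ being minors forces $\tilde m\in\lam(m)$, but that lemma gives the \emph{opposite} inclusion: it says $m\in\lam(\tilde m)$. Establishing $\tilde m\in\lam(m)$ is precisely the substantive content of the paper's proof, not a triviality one can cite. The paper handles it by choosing the greatest $i<n$ with $\tilde m\le m'=\si_2^i(\tilde m)\le m$ and arguing that no later image of $m'$ can enter $S(m)$ vertically: otherwise the subsequent image would land strictly behind $m$, hence either behind the minor $\tilde m$ (impossible by SA4) or separating $\tilde m$ from $m$ (impossible by maximality of $i$). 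This forces $\si_2^{n-1}(\tilde m),\si_2^{n-2}(\tilde m),\dots,\si_2^i(\tilde m)$ to be successive $m$-pullbacks of $m$, hence leaves of $\lam(m)$; thus $m'\in\lam(m)$ is a from-$m$ derived minor, and $i>0$ would produce exactly the forbidden separating minor image.

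Two further errors compound this. First, your appeal to Lemma~\ref{l:techno}(2) to conclude that a separating image $\si_2^i(\tilde m)$ is a minor is a misreading: that lemma takes as \emph{hypothesis} that both chords are minors, and merely upgrades $\le$ to $<$; it does not promote an arbitrary image to a minor. In the paper's argument the minority of the separating image is a conclusion, not an input. Second, you identify ``horizontal edge of $Q(m)$'' with ``major of $\lam(m)$''; these are different (the majors are the vertical edges $\pm M$), and Lemma~\ref{l:techno}(1) does not assert that $\si_2^{n-1}(\tilde m)$ is a horizontal edge --- only that if some image is horizontal then it occurs at time $n-1$. Your reduction of part~(2) to part~(1) is fine and matches the paper.
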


\begin{proof}
(1) To prove that $\tilde m\in \lam(m)$, consider $\si^i_2(\tilde m)$ for $0\le i\le n-1$.
Choose the greatest $i<n$ such that $\si_2^i(\tilde m)=m'$ satisfies $\tilde m\le m'\le m$.
Then no iterated image of $m'$ separates $\tilde m$ from $m$.
We claim that no image of $m'$ enters $S(m)$ vertically.
Indeed, otherwise the next image of $m'$ would have to enter $C(m)$ either separating $\tilde m$ and $m$
(impossible by the choice of $i$), or behind $\tilde m$ (impossible because $\tilde m$ is a minor).
Hence the leaves $\si_2^j(\tilde m)$, where $j=n-1$, $n-2$, $\dots$, $i$ are pullbacks of $m$ in $\lam(m)$.
Thus, $m'\in \lam(m)$ and is, therefore, a from $m$ derived minor.
If $i>0$, then $m'$ is a minor separating $\tilde m$ from $m$, a contradiction with the assumptions of the lemma.
We must conclude that $i=0$ and $m'=\tilde m$, in particular, $\tilde m\in\lam(m)$.
By definition, it follows that $\tilde m$ is a child of $m$.

(2) Follows from (1) applied to pairs of minors $m_i<m_{i+1}$, where $0\le i\le r-1$.
\end{proof}

The following lemma relates approximation by dynamical pullbacks and approximation by parameter pullbacks.

\begin{lemma}
  \label{l:ofs-approx}
  Let $m_0$ be a non-degenerate minor.
Suppose that $m\le m_0$ is a minor approximated by pullbacks of $m_0$ in $\lam(m)$.
Then $m$ can be approximated by offsprings of $m_0$.
\end{lemma}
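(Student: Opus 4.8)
The plan is to exploit Lemma~\ref{l:minor-child} to convert a chain of dynamical ($m$-)pullbacks approximating $m_0$ into a chain of offsprings of $m_0$, then pass to the limit. First I would fix the hypothesis: there is a sequence of pullbacks $\ell_k\in\lam(m)$ of $m_0$ with $\ell_k\to m$ and $\ell_k\le m\le m_0$; by Lemma~\ref{l:nocross-m0} we already know $m_0\in\lam(m)$ whenever $m\le m_0$, so this setup is consistent. The key point is that each $\ell_k$ is a pullback of $m_0$ \emph{inside} $\lam(m)$, i.e.\ $\si_2^{n_k}(\ell_k)=m_0$ for some $n_k>0$; I must, however, be careful that $\ell_k$ itself need not be a minor. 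The remedy is to invoke Lemma~\ref{l:find-min} (or Theorem~A directly): behind $m_0$, any point eventually mapped to an endpoint of $m_0$ lies on an offspring $\mu_k$ of $m_0$. Concretely, pick an endpoint $a_k$ of $\ell_k$; since $\ell_k<m_0$ (strict, because $\ell_k\le m\le m_0$ and an eventual image of $\ell_k$ is $m_0$, so Lemma~\ref{l:techno}(2) gives $\ell_k<m_0$ — provided $\ell_k\ne m_0$, which holds since $\ell_k\to m$ and we may discard finitely many terms) and $\si_2^{n_k}(a_k)$ is an endpoint of $m_0$, Theorem~A produces an offspring $\mu_k$ of $m_0$ with $a_k\in\mu_k$ and $\mu_k\le m_0$.

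The second step is to show $\mu_k\to m$, which is where the main work lies. I would argue that $\mu_k$ is ``close'' to $\ell_k$, hence close to $m$: by the construction in Theorem~A, $\mu_k$ is obtained from $\ell_k$ by the offspring process, during which the relevant pullback is chosen among iterated images that separate $a_k$ from $m_0$ and is taken \emph{closest} to $m_0$; in particular $\ell_k\le\mu_k\le m_0$, so $\mu_k$ lies between $\ell_k$ and $m_0$ in the $\le$-order. Actually the cleaner route is: $\mu_k$ is a minor with $\mu_k\le m_0$ and $a_k\in\ol{H(\mu_k)}$; since $a_k$ is an endpoint of $\ell_k$ and $\ell_k\to m$, the endpoints of $\ell_k$ converge to those of $m$, so any subsequential limit $\mu$ of $\mu_k$ (exists by compactness of the space of chords) satisfies: $\mu$ is a leaf of $\qml$ (leaves of $\qml$ form a lamination, so limits of minors are minors or points), $\mu\le m_0$, and one endpoint of $\mu$ is an endpoint of $m$. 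I then need to rule out $\mu\ne m$. Here I would use that $m\le\mu_k$ cannot hold with $\mu_k$ strictly between $m$ and $m_0$ unless forced — more precisely, since $\ell_k\le\mu_k$ and $\ell_k\le m$, the leaf $\mu_k$ shares the endpoint $a_k$ with $\ell_k$ and cannot cross $m$ (both are leaves of $\qml$, which is a lamination with $m$ a leaf), so $\mu_k$ lies behind $m$ or shares an endpoint with $m$; combined with $a_k\to$ endpoint of $m$, the limit $\mu$ has one endpoint equal to an endpoint of $m$ and lies in $\ol{D(m)}$, forcing $\mu=m$ or $\mu$ a singleton at an endpoint of $m$. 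The singleton case is excluded because the \emph{other} endpoint of $\mu_k$, call it $b_k$, satisfies $\si_2(\mu_k)$ being a minor with controlled length; rather than chase this, I would instead note $|\mu_k|\ge|\ell_k|$ is not automatic, so the safer argument is that $m\in\lam(\mu_k)$ would fail if $\mu_k$ were a tiny chord at an endpoint of $m$ with $m\not\le\mu_k$ — but $\ell_k\le\mu_k$ with $\ell_k\to m$ forces $|\mu_k|\to$ something $\ge\liminf|\ell_k|$; and $|\ell_k|\to|m|>0$. Hence $\mu=m$.

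The third step is to upgrade convergence of a subsequence to convergence of the whole sequence: every subsequence of $(\mu_k)$ has a further subsequence converging to $m$ by the argument above, so $\mu_k\to m$. Therefore $m$ is approximated by the offsprings $\mu_k$ of $m_0$, as required. The main obstacle I anticipate is Step~2, specifically pinning down that the limiting chord $\mu$ cannot degenerate to a point at an endpoint of $m$ nor be some chord strictly between $m$ and $m_0$; this requires carefully tracking both endpoints of $\mu_k$ and using that $\ell_k\le\mu_k\le m_0$ together with $|\ell_k|\to|m|$ and Lemma~\ref{l:new-minor-1}-type length control. If the between-ness $\ell_k\le\mu_k$ is not immediate from Theorem~A's construction, I would instead argue directly that in the offspring process the first pullback $m'_l$ one passes to satisfies $\ell_k\le m'_l\le m_0$ and each subsequent pullback only moves outward toward $a_k$, keeping everything behind $m_0$ and on the same side of $m$ as $\ell_k$, which suffices.
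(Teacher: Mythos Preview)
Your Step~2 has a genuine gap: you construct the offspring $\mu_k$ through \emph{one} endpoint $a_k$ of $\ell_k$, but you never gain control of the other endpoint of $\mu_k$. The claimed inequality $\ell_k\le\mu_k$ (which would yield $|\mu_k|\ge|\ell_k|\to|m|$ and rule out degeneration) is not justified by the offspring construction: $\mu_k$ is a leaf of $\lam(m'_l)$ for some intermediate minor $m'_l$, while $\ell_k$ lies in $\lam(m)$, and nothing forces these two chords through $a_k$ to be comparable in the direction you need. Nothing in Theorem~A or its proof prevents $\mu_k$ from being a short chord near $a_k$, and your fallback paragraph (``each subsequent pullback only moves outward toward $a_k$'') does not supply the missing length bound. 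A secondary issue is that you assume $\ell_k\le m$ from the outset, whereas the pullbacks may approach $m$ from the other side ($m<\ell_k<m_0$), a case you never treat.

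The paper sidesteps the whole difficulty by never invoking Theorem~A on a single endpoint. Instead it shows that $\ell_n$ itself, or a suitable forward image of $\ell_n$, is \emph{already} a minor; convergence to $m$ is then automatic. Concretely: if $\ell_n<m$ then, once $|\ell_n|>|m|/2$, Lemma~\ref{l:new-minor-1} makes $\ell_n$ a minor outright; if $m<\ell_n$ then among the forward images $\si_2^i(\ell_n)$ separating $m$ from $m_0$ one chooses the one closest to $m$, and Lemma~\ref{l:between} shows this image is a minor (it lies between $m$ and $\ell_n$, so it still tends to $m$). In either case one has a minor $<m_0$ eventually mapped to $m_0$, hence an offspring of $m_0$. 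The tools you are missing are precisely Lemmas~\ref{l:new-minor-1} and~\ref{l:between}, which certify that the approximating \emph{leaves themselves} are minors and make the endpoint-based detour through Theorem~A unnecessary.
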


\begin{proof}
We may assume that $m$ is never mapped to $m_0$ under $\si_2$.
By Lemma \ref{l:nocross-m0}, the chord $m_0$ is a leaf of $\lam(m)$.
Let $\ell_n$ be a sequence of leaves of $\lam(m)$ converging to $m$ and
such that $\si_2^{k_n}(\ell_n)=m_0$ for some $k_n$. Since infinitely
many $\ell_n$'s cannot share an endpoint with $m$, then we may assume
that all $\ell_n$ are disjoint from $m$ in $\ol\disk$.
We may assume that $\ell_n<m_0$.
If $\ell_n<m$ for infinitely many values of $n$, then, by Lemma~\ref{l:new-minor-1}, we may assume that these $\ell_n$ are minors,
and, by Lemma~~\ref{l:minor-child} and Theorem A, they are offsprings of $m_0$.
Suppose now that $\ell_n>m$ for infinitely many values of $n$; we may assume this is true for all $n$.
Consider all images of $\ell_n$ that separate
$m$ from $m_0$ and choose among them the closest to $m$ leaf
$\si_2^i(\ell)$. By Lemma~\ref{l:between} $\si_2^i(\ell)$ is a minor,
and by Theorem A $\si_2^i(\ell)$ is an offspring of $m_0$. This
completes the proof of the lemma.
\end{proof}

We can now prove the following theorem.

\begin{theorem}
  \label{t:R-approx}
  Let $m_0$ be a periodic non-degenerate minor, and let $m\le m_0$ be a non-degenerate minor.
Suppose that any lamination $\lam\subsetneqq\lam(m)$ satisfies $\lam\subsetneqq\lam(m_0)$.
Then $m$ is a limit of offsprings of $m_0$.
\end{theorem}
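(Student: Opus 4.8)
The plan is to reduce the statement to Lemma~\ref{l:ofs-approx}, which says that if $m\le m_0$ is a minor that can be approximated by \emph{dynamical} pullbacks of $m_0$ inside $\lam(m)$, then $m$ is approximated by offsprings of $m_0$. Thus the real content to be established is: under the hypothesis that every lamination $\lam\subsetneqq\lam(m)$ also satisfies $\lam\subsetneqq\lam(m_0)$, the minor $m_0$, regarded as a leaf of $\lam(m)$ (it is one by Lemma~\ref{l:nocross-m0}, since $m\le m_0$), is a limit of its own iterated $\si_2$-pullbacks within $\lam(m)$. Once that is known, Lemma~\ref{l:ofs-approx} finishes the argument immediately, and we may as well assume $m$ is never mapped to $m_0$ (otherwise $m$ is itself an iterated image of a pullback situation and the conclusion is even easier, or $m=m_0$ and the statement is trivial via Theorem~\ref{t:mpb}).

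The key step is therefore to exploit the renormalization hypothesis. First I would invoke Theorem~\ref{t:mpb}: since $m_0$ is non-degenerate (and periodic, but what matters is non-degenerate/non-periodic-point status), iterated $m_0$-pullbacks of iterated $\si_2$-images of $m_0$ are dense in $\lam(m_0)$. In particular the backward orbit of $m_0$ accumulates everywhere in $\lam(m_0)$, and one shows $m_0$ itself is a limit of its iterated pullbacks in $\lam(m_0)$ — this uses that $m_0$ is periodic, so $m_0$ is among the iterated $\si_2$-images of $m_0$, hence its iterated pullbacks are dense near it. Call $(\ell_n)$ such a sequence of leaves of $\lam(m_0)$ with $\ell_n\to m_0$ and $\si_2^{k_n}(\ell_n)=m_0$. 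Now consider the sublamination of $\lam(m)$ generated by $m$ together with — the point is to produce a lamination $\lam'$ with $\lam'\subsetneqq\lam(m)$; by hypothesis $\lam'\subsetneqq\lam(m_0)$, and I want to run this the other way: if $m_0$ were an \emph{isolated} leaf of $\lam(m)$ (not a limit of its own pullbacks from within $\lam(m)$), I would derive a contradiction by constructing a lamination strictly between, or strictly inside $\lam(m)$, which fails to lie strictly inside $\lam(m_0)$.

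More concretely, the argument I expect to work is contrapositive: suppose $m_0$ is \emph{not} a limit of pullbacks of $m_0$ lying in $\lam(m)$. Then the pullbacks $\ell_n\in\lam(m_0)$ approaching $m_0$ are eventually \emph{not} in $\lam(m)$, so each $\ell_n$ (for large $n$) is a diagonal of some gap $G_n$ of $\lam(m)$; as in the proof of Lemma~\ref{l:tune-m1-le-m}, $G_n$ must be an (eventually precritical) infinite gap whose appropriate image is a periodic quadratic Fatou gap, and $\lam(m)$ is forced to contain the majors of that Fatou gap. One then argues that $\lam(m)$ contains a \emph{proper} sublamination $\lam'=\lam(m')$ — built from the periodic edge of that Fatou gap, or from $m$ together with finitely many of its images — with $\lam'\subsetneqq\lam(m)$ but $\lam'\not\subsetneqq\lam(m_0)$ (because $m_0$, being a leaf of $\lam(m)$ but a diagonal-ancestor inside the extra gap structure, is not a leaf of $\lam'$ while the majors of $\lam'$ can cross leaves of $\lam(m_0)$, or $\lam'$ simply is not contained in $\lam(m_0)$ since $m_0\in\lam(m_0)$ but the relevant Fatou-gap edges of $\lam'$ would have to cross $m_0$). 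This contradicts the hypothesis. Hence $m_0$ \emph{is} a limit of pullbacks of $m_0$ within $\lam(m)$, and Lemma~\ref{l:ofs-approx} gives the conclusion. The main obstacle is this last step: cleanly producing the offending proper sublamination $\lam'$ of $\lam(m)$ and verifying it is not a proper sublamination of $\lam(m_0)$; this requires a careful case analysis of the gap $G_n$ of $\lam(m)$ that absorbs the pullbacks $\ell_n$ (finite preperiodic critical gap, collapsing quadrilateral, Siegel, caterpillar, or periodic Fatou), largely parallel to the case analysis already carried out in the proof of Lemma~\ref{l:tune-m1-le-m}, together with the structural description of $\lam(m)$ from Theorem~\ref{t:major} (namely $\lam(m)=\hlam(m)$ tuned by $Q(m)$).
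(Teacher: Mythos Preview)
Your reduction to Lemma~\ref{l:ofs-approx} is right, but you then misidentify the target: Lemma~\ref{l:ofs-approx} requires that \emph{$m$} be approximated by pullbacks of $m_0$ in $\lam(m)$, not that $m_0$ be approximated by its own pullbacks. Showing that $m_0$ is a limit of its pullbacks inside $\lam(m)$ is both easier than you suggest (in fact $\lam(m_0)\subset\lam(m)$ whenever $m\le m_0$, so the $\ell_n$ you construct are automatically in $\lam(m)$ and your contrapositive assumption is vacuously false) and useless for the conclusion: it says nothing about leaves near $m$.

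The paper avoids all of your proposed gap-by-gap case analysis with a one-line construction. Let $\lam_1\subset\lam(m)$ be the closure of \emph{all} iterated pullbacks of $m_0$ taken inside $\lam(m)$; since $m_0$ is periodic this also contains all forward images of $m_0$, so $\lam_1$ is a quadratic invariant lamination. If $\lam_1=\lam(m)$ then in particular $m\in\lam_1$ is a limit of pullbacks of $m_0$, and Lemma~\ref{l:ofs-approx} finishes. If $\lam_1\subsetneqq\lam(m)$, the hypothesis forces $\lam_1\subsetneqq\lam(m_0)$; but $m_0\in\lam_1$ and (by Theorem~\ref{t:mpb}, using that $m_0$ is periodic non-degenerate) the closure of the iterated pullbacks of $m_0$ already contains $\lam(m_0)$, so $\lam(m_0)\subset\lam_1$, a contradiction. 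Thus the ``offending proper sublamination'' you are looking for is simply $\lam_1$ itself, and no analysis of gap types in $\lam(m)$ is needed.
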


\begin{proof}
  By Lemma \ref{l:ofs-approx}, it suffices to approximate $m$ by
pullbacks of $m_0$ in $\lam(m)$. Consider the lamination $\lam_1$
consisting of iterated pullbacks of $m_0$ in $\lam(m)$ and their limits
(this includes the iterated images of $m_0$ since $m_0$ is periodic); then
$\lam_1\subset \lam(m)$. If $\lam_1=\lam(m)$, we are done; let
$\lam_1\ne\lam(m)$. Then, by our assumption, $\lam_1\subsetneqq
\lam(m_0)$. However, since $m_0\in\lam_1$, it follows from
Theorem~\ref{t:major} that $\lam_1=\lam(m_0)$, a contradiction with our
assumption.
\end{proof}

We need a lemma dealing with tuning of q-laminations.

\begin{lemma}\label{l:tunq}
Let $\lam_1\subsetneqq \lam_2$ be q-laminations where $\lam_1$ is not the empty lamination. Then $\lam_1$ has a periodic
quadratic Fatou gap, and, therefore, its minor is periodic and non-degenerate.
\end{lemma}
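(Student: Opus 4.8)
Let $\lam_1\subsetneqq\lam_2$ be q-laminations where $\lam_1$ is not empty. Then $\lam_1$ has a periodic quadratic Fatou gap, and hence its minor is periodic and non-degenerate.

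The plan is to analyze the central (critical) set $C_1$ of $\lam_1$ and show that the only way a strictly larger q-lamination can exist is for $C_1$ to be an infinite periodic quadratic Fatou gap. First I would recall the classification of dynamic gaps (Corollary \ref{t:finOcri} and the list of semi-critical gap types after it) together with Proposition \ref{p:cov-prop} and Lemma \ref{l:1stret}. Since $\lam_1$ is a q-lamination, its critical set $C_1$ is semi-critical, so it is one of: a critical leaf (diameter), a collapsing triangle, a collapsing quadrilateral, a finite critical gap with $\geq 6$ edges, a caterpillar gap, or a Siegel gap. I would rule out all of these except the infinite periodic Fatou gap of degree-two covering type, using that $\lam_2$ is obtained from $\lam_1$ by inserting chords into gaps of $\lam_1$ while preserving all three invariance properties.

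The key steps, in order. (i) If $C_1$ is a critical leaf, then $\lam_1=\lam(m)$ for a degenerate minor $m$ and, by Theorem \ref{t:major}, the associated q-lamination is maximal among laminations with degenerate critical set that are q-laminations — any strictly larger lamination must introduce a new collapsing quadrilateral or triangle inside $C_1$, which forces a critical chord of $\lam_2$ strictly inside a region where $\lam_1$ already has its critical leaf; since $\lam_2$ is a q-lamination its critical set is determined by a laminational equivalence, and one checks this cannot properly extend $\lam_1$ unless $\lam_1$ is empty (contradiction). (ii) If $C_1$ is a collapsing triangle or a caterpillar gap, then $C_1$ has a critical edge $\ell$; any $\ell'\in\lam_2\setminus\lam_1$ inserted in a gap $G$ of $\lam_1$ pulls back, and pulling back eventually lands in $C_1$, producing leaves that cross the critical edge $\ell$ or its eventual images — impossible. (iii) If $C_1$ is a finite collapsing quadrilateral or a finite critical gap with $\geq 6$ edges, then all gaps of $\lam_1$ are finite (its associated laminational equivalence has only finite classes, as in the discussion around Theorem \ref{t:major}); a finite-gap q-lamination with finite critical set is perfect and maximal — any chord $\ell'$ inserted into a finite gap $G$ of $\lam_1$ has iterated images dense in $\lam_1$ by Lemma \ref{l:1stret}(1) applied along the orbit of $G$ back to $C_1$, and these images must cross leaves of $\lam_1$, contradicting that $\lam_2$ is a lamination. (iv) If $C_1$ is a Siegel gap, then by Lemma \ref{l:1stret}(2) any diagonal inserted into $C_1$ has eventual images that cross, so no chord can be inserted into $C_1$; and chords inserted into other gaps pull back into $C_1$, again a contradiction. (v) The only remaining case is $C_1$ an infinite periodic quadratic Fatou gap on which the first return map has the degree-two covering property; then as in the arguments of Lemma \ref{l:tune-m1-le-m}, $C_1$ has a unique periodic edge $M$ of the same period, $\pm M$ are the majors, and $m=\si_2(M)$ is the (unique periodic, non-degenerate) minor of $\lam_1$.

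The main obstacle will be step (iii): carefully justifying that a finite-gap q-lamination with finite critical set admits no proper q-lamination extension. The essential point is the perfectness/maximality of such q-laminations — one must argue that any chord inserted into any finite gap $G$ of $\lam_1$, when pulled back through the (pre)periodic orbit of $G$ into $C_1$ and then acted on by the transitive combinatorial rotation of $C_1$ (Lemma \ref{l:1stret}(1)), produces a chord linked with an existing leaf of $\lam_1$. I would handle the (pre)periodicity of finite gaps using Corollary \ref{t:finOcri}, reduce to the periodic case, and then invoke the transitivity of the first return rotation together with the fact that the forward orbit of any inserted diagonal of $C_1$ crosses itself — which is exactly the obstruction that prevents $\lam_2$ from being a lamination. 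Once (i)–(iv) are excluded, (v) gives the conclusion, and the final sentence (periodicity and non-degeneracy of the minor) is immediate from the structure of a quadratic Fatou gap and the uniqueness of its periodic edge.
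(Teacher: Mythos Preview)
Your case-by-case analysis of the critical set can be made to work, but it is considerably more elaborate than the paper's argument, and step (iii) as written is misstated. The paper does not split on the type of critical set at all. It simply observes that any leaf of $\lam_2\setminus\lam_1$ must lie in some gap of $\lam_1$, and that if $\lam_1$ has no periodic quadratic Fatou gap then every gap is finite or (pre-)Siegel (caterpillar gaps cannot occur in a q-lamination, since consecutive edges would force all vertices into a single, hence infinite, equivalence class). Siegel gaps are handled essentially as in your (iv). Finite gaps are dispatched in one line using only that both laminations are q-laminations: the edges of a finite gap $G$ of $\lam_1$ are leaves of $\lam_2$, hence all vertices of $G$ lie in a single $\sim_2$-class, whose convex hull therefore contains $G$; thus no diagonal of $G$ is an edge of that hull, i.e., no diagonal of $G$ is a leaf of $\lam_2$. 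This single observation replaces your steps (i)--(iii) and avoids any dynamics.

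Your dynamical argument in (iii) does not go through as stated. Iterated images of a diagonal $\ell'$ of a finite periodic gap $G$ remain in $G$ under the first return map and never cross leaves of $\lam_1$ (the nearby leaves of $\lam_1$ are the edges of $G$), nor are they ``dense in $\lam_1$''. What Lemma~\ref{l:1stret}(1) actually gives is that some iterate of $\ell'$ crosses $\ell'$ \emph{itself}; since both are leaves of $\lam_2$, that is the contradiction you want, but it is a self-crossing inside $\lam_2$, not a crossing with $\lam_1$. With that correction the periodic case is fine; for strictly preperiodic finite gaps you must still push forward and check that the diagonal does not collapse to an edge when passing through the critical gap---exactly the bookkeeping that the one-line q-lamination argument above sidesteps.
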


\begin{proof}
Suppose that $\lam_1$ does not have a periodic quadratic Fatou gap.
Then all gaps of $\lam_1$ are either (a) finite, or (b) infinite
eventually mapped to a periodic \emph{Siegel} gap for whom the first
return map is semiconjugate to an irrational rotation (the
semiconjugacy collapses the edges of the gap). Evidently, no leaves of
$\lam_2$ can be contained in finite gaps of $\lam_1$ because both
laminations are q-laminations. On the other hand, no leaves of $\lam_2$
can be contained in periodic Siegel gaps because any such leaf would
cross itself under a suitable power of $\si_2$ (this conclusion easily
follows from the semiconjugacy with an irrational rotation). Thus,
if $\lam_1$ does not have a periodic quadratic Fatou gap then no new leaves can be added
to $\lam_1$ and the inclusion $\lam_1\subsetneqq \lam_2$ is impossible.
\end{proof}

Recall that a quadratic lamination $\lam$ is called \emph{almost non-renormalizable} if
$\lam'\subsetneqq\lam$ implies that $\lam'$ is the empty lamination.
Note that all almost non-renormalizable laminations with non-degenerate
minors are q-laminations (if $\lam$ is not a q-lamination with a
non-degenerate minor then by Theorem \ref{t:major} there exists a
unique non-empty q-lamination $\hlam\subsetneqq \lam$, a
contradiction). The role of almost non-renormalizable minors is clear from the next lemma.

\begin{lemma}\label{l:ancestor}
Let $\lam$ be a lamination with non-degenerate minor $m$. Then there exists a unique almost
non-renormalizable lamination $\lam_0\subset \lam$ with non-degenerate minor $m_0$ such that
$m\subset \si_2(C(m_0))$.
\end{lemma}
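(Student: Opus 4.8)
The plan is to produce the lamination $\lam_0$ by descending through a finite chain of renormalizations. First I would consider the collection $\mathfrak{A}$ of all non-empty laminations $\lam' = \lam(m')$ with $m'$ periodic and non-degenerate such that $\lam(m')\subset \lam$ and $m\subset\si_2(C(m'))$. This collection is non-empty because $\lam$ itself, if renormalizable, belongs to it (and if $\lam$ is already almost non-renormalizable with a periodic non-degenerate minor we may take $\lam_0=\lam$); more carefully, the discussion preceding Theorem~B shows that whenever $\lam(m_1)=\lam$ is renormalizable, $m_1$ lies in $\si_2(C(m))$ for some periodic minor $m$ with $\lam(m)\subsetneqq\lam$, so $\mathfrak A\ne\varnothing$ unless $\lam$ itself is already the desired $\lam_0$. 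I would then pick an element $\lam_0=\lam(m_0)\in\mathfrak A$ that is minimal with respect to inclusion, and argue that $\lam_0$ is almost non-renormalizable.

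The key step is to show that minimality forces almost non-renormalizability. Suppose not; then there is a non-empty lamination $\lam'\subsetneqq \lam_0$. I would like to upgrade $\lam'$ to a $\lam(m')$ with $m'$ periodic and non-degenerate, and to verify $m\subset\si_2(C(m'))$, contradicting minimality. For the first part, recall that by Theorem~\ref{t:major} there is a unique non-empty q-lamination $\hlam(m')\subset\lam'$, and applying Lemma~\ref{l:tunq} to the chain $\hlam(m')\subsetneqq\hlam(m_0)$ (or to $\hlam(m')\subsetneqq\lam_0$ when these q-laminations are distinct) produces a periodic quadratic Fatou gap, hence $\hlam(m')$ has periodic non-degenerate minor; I can then take $\lam(m')$ to be the associated minor leaf lamination, which still sits inside $\lam_0$ by the uniqueness statements of Theorem~\ref{t:major}. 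For the containment $m\subset\si_2(C(m'))$: since $\lam(m')\subsetneqq\lam_0=\lam(m_0)$, the majors $\pm M_0$ of $\lam(m_0)$ are leaves of $\lam(m')$ longer than the majors $\pm M'$, so $S(m_0)\subset S(m')$ and $m_0\le m'$; because $m$ is a non-degenerate minor with $m\subset\si_2(C(m_0))$ and $m_0\le m'$, one checks that $m$ lies on the correct side of $m'$ and inside $\si_2(C(m'))$ — here the ``central strip / no image behind a minor'' machinery (SA4, Lemma~\ref{l:nodrop}, and the description of $\si_2(C(m'))$ as the side of $\si_2(C(m'))$ bounded by $m'$) does the work. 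This contradicts the minimality of $\lam_0$, so $\lam_0$ must be almost non-renormalizable.

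For uniqueness, suppose $\lam_0=\lam(m_0)$ and $\lam_0'=\lam(m_0')$ are both almost non-renormalizable, both contained in $\lam$, with $m\subset\si_2(C(m_0))$ and $m\subset\si_2(C(m_0'))$. Then $m_0$ and $m_0'$ are both edges of the infinite gaps $\si_2(C(m_0))$, $\si_2(C(m_0'))$ respectively, both of whose corresponding laminations are contained in $\lam$; I would use that two periodic minors which do not cross and whose minor leaf laminations are nested-or-comparable must actually satisfy $\lam(m_0)\subset\lam(m_0')$ or vice versa (this is where Theorem~\ref{t:com-pb} and the structure of $\qml^{nr}$ enter — the infinite gaps $\si_2(C(\cdot))$ of $\qml^{nr}$ corresponding to almost non-renormalizable laminations are pairwise disjoint, so $m$ cannot lie in two of them unless they coincide). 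Almost non-renormalizability of each then forces $\lam_0=\lam_0'$, since neither can properly contain the non-empty other.

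The main obstacle I expect is the containment step $m\subset\si_2(C(m'))$ inside the minimality argument: passing from the bare inclusion $\lam'\subsetneqq\lam_0$ to a genuine \emph{periodic non-degenerate} minor $m'$ with the gap $\si_2(C(m'))$ still capturing $m$ requires carefully combining Lemma~\ref{l:tunq}, Theorem~\ref{t:major}, and the geometric bookkeeping of central strips, and one must be careful about the edge cases where $m'$ is an endpoint of $m_0$ or where the relevant q-laminations coincide with their minor leaf laminations. The rest is a finite-descent / uniqueness argument that should go through routinely once this step is secured.
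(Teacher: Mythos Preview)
Your approach has a genuine gap, and it is also considerably more elaborate than the paper's.

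The gap is in the sentence ``pick an element $\lam_0=\lam(m_0)\in\mathfrak A$ that is minimal with respect to inclusion.'' You never justify that $\mathfrak A$ has a minimal element. The elements of $\mathfrak A$ are totally ordered, but a priori there could be an infinite strictly descending chain $\lam(m'_1)\supsetneqq\lam(m'_2)\supsetneqq\cdots$ of periodic renormalizations whose intersection has a non-periodic minor and hence is not in $\mathfrak A$. One can in fact rule this out (for instance by arguing that if $\lam(m'')\subsetneqq\lam(m')$ with both minors periodic then the period of $m''$ strictly divides that of $m'$, so the chain of periods terminates), but you do not do this, and the phrase ``finite chain of renormalizations'' is an assertion, not an argument. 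A second, smaller problem: your uniqueness argument invokes the fact that the infinite gaps $\si_2(C(\cdot))$ of $\qml^{nr}$ are pairwise disjoint, which in the paper is essentially Theorem~\ref{t:qml-nr} and is proved \emph{using} this lemma, so as written the argument is circular.

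The paper avoids all of this with a one-line construction: take $\lam_0$ to be the intersection of \emph{all} non-empty sublaminations of $\lam$. The minors of such sublaminations are linearly ordered by $\le$ (each satisfies $m\le m'$), and since every non-empty quadratic lamination has a leaf of length $\ge\frac13$, the intersection is a non-empty lamination. By construction any non-empty $\lam'\subsetneqq\lam_0$ would also lie in the intersection, forcing $\lam_0\subset\lam'$, a contradiction; hence $\lam_0$ is almost non-renormalizable. The containment $m\subset\si_2(C(m_0))$ then follows because $C(\lam)\subset C(\lam_0)$, and uniqueness is immediate: any other almost non-renormalizable $\lam_0'\subset\lam$ contains $\lam_0$ by construction, whence $\lam_0=\lam_0'$. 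No restriction to periodic minors, no minimal-element search, and no appeal to $\qml^{nr}$ is needed.
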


\begin{proof}
Consider a lamination $\lam'\subset \lam$ with minor $m'$. Then, by
definition, $m\le m'$. Hence minors of all laminations contained in
$\lam$ are linearly ordered. Take the intersection $\lam_0$ of all
non-empty laminations contained in $\lam$; note that this intersection
is not the empty lamination as every non-empty lamination contains a
leaf of length at least $\frac13$. It follows that $\lam_0$ is itself a
non-empty lamination and that the minor $m_0$ of $\lam_0$ is such that
$m'\le m_0$ for every non-empty lamination $\lam'\subset \lam$ (here
$m'$ is the minor of $\lam'$). Evidently, $m\subset \si_2(C(m_0))$
(notice that if $\lam'\subset \lam$ then $C(\lam')\supset C(\lam)$).
\end{proof}

The set $\qml^{nr}$ by definition consists of all singletons in $\uc$ and the
postcritical sets of all almost non-renormalizable laminations.
The following theorem was obtained \cite{bot17}; 
for completeness, we prove it below.

\begin{theorem}
  \label{t:qml-nr}
  The set $\qml^{nr}$ is a lamination.
\end{theorem}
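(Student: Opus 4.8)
The plan is to verify the two defining properties of a (geodesic) lamination for the collection $\qml^{nr}$: namely, that its leaves are pairwise unlinked (so that $\qml^{nr}$ is genuinely a collection of disjoint chords), and that it is closed, i.e., the limit of any convergent sequence of leaves of $\qml^{nr}$ is again a leaf of $\qml^{nr}$. The first is essentially inherited from $\qml$; the second is the substantive point.

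\begin{proof}
First we check that leaves of $\qml^{nr}$ are pairwise unlinked. Every leaf of $\qml^{nr}$ is, by construction, either a point of $\uc$ or an edge of a set of the form $\si_2(C(m_0))$ for an almost non-renormalizable lamination $\lam(m_0)$ with non-degenerate periodic minor $m_0$; as remarked in the text right before the statement (and in Section~\ref{ss:qml-gaps}), all such edges are minors, hence leaves of $\qml$. Since distinct minors do not cross (Thurston's theorem, recalled in the Introduction), the leaves of $\qml^{nr}$ are pairwise unlinked. It also follows that, for a fixed $m_0$, the edges of $\si_2(C(m_0))$ together with the appropriate arcs of $\uc$ bound $\si_2(C(m_0))$ itself, which is an infinite gap of $\qml^{nr}$; the special gap $\ca^c$ is handled the same way, being bounded by its edges, which are periodic minors.

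It remains to prove closedness. Let $\ell_n\to\ell$ with each $\ell_n\in\qml^{nr}$; we must show $\ell\in\qml^{nr}$. If $\ell$ is degenerate we are done, since all singletons in $\uc$ lie in $\qml^{nr}$, so assume $\ell$ is non-degenerate. Each $\ell_n$ is an edge of some $\si_2(C(m_0^{(n)}))$ (or of $\ca^c$). Since each $\ell_n$ is in particular a leaf of $\qml$, and $\qml$ is a lamination, $\ell\in\qml$, so $\ell$ is a minor, say the minor of $\lam(\ell)$; let $m_0$ be the minor of the unique almost non-renormalizable $\lam(m_0)\subset\lam(\ell)$ furnished by Lemma~\ref{l:ancestor}, so that $\ell\subset\si_2(C(m_0))$. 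The plan is to argue that in fact $\ell$ is an \emph{edge} of $\si_2(C(m_0))$, hence a leaf of $\qml^{nr}$. Suppose not: then $\ell$ enters the interior of $\si_2(C(m_0))$, which by the discussion preceding the theorem means $\lam(\ell)\supsetneqq\lam(m_0)$, i.e., $\lam(\ell)$ is renormalizable in a nontrivial way. One then shows this is incompatible with $\ell$ being a limit of edges $\ell_n$ of sets $\si_2(C(m_0^{(n)}))$: each $\ell_n$, as a minor of an almost non-renormalizable lamination (or an edge of $\ca^c$), lies ``outside'' every gap of the form $\si_2(C(m_1))$ with $\lam(m_1)$ almost non-renormalizable, except possibly on its boundary; in particular no $\ell_n$ can lie in the \emph{interior} of $\si_2(C(m_0))$. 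Since the interior of $\si_2(C(m_0))$ is open and $\ell$ lies in it, for large $n$ the leaves $\ell_n$ would also have to meet that interior, a contradiction. Therefore $\ell$ is an edge of $\si_2(C(m_0))$ and $\ell\in\qml^{nr}$.

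The main obstacle is the last paragraph: making precise why a convergent sequence of edges of ``maximal'' gaps $\si_2(C(m_0^{(n)}))$ cannot have a limit strictly interior to another such gap. The cleanest route is to observe that the complement $\ol\disk\setminus\qml^{nr}$ decomposes into the (relatively open) interiors of the infinite gaps $\ca^c$ and $\si_2(C(m_0))$ together with interiors of finite gaps; since $\qml^{nr}\subset\qml$ and $\ell$ is a minor strictly inside $\si_2(C(m_0))$, the open set $\Int(\si_2(C(m_0)))$ is a neighborhood of an interior point of $\ell$ disjoint from $\qml^{nr}$, contradicting $\ell_n\to\ell$ with $\ell_n\in\qml^{nr}$. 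One must be slightly careful that $\ell$ does not merely touch the boundary of $\si_2(C(m_0))$ at an endpoint; this is exactly the place where one uses that $\ell\subset\si_2(C(m_0))$ with $\lam(m_0)$ \emph{almost non-renormalizable} chosen via Lemma~\ref{l:ancestor}, together with the structural description (from Section~\ref{ss:qml-gaps} and the text preceding the theorem) of which minors are edges of $\si_2(C(m_0))$ versus which ones penetrate its interior.
\end{proof}
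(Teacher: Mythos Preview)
Your approach and the paper's are essentially the same: both hinge on the characterization that $\qml^{nr}$ consists exactly of those leaves of $\qml$ that do not meet the interior of any gap $\si_2(C(m))$ with $m$ a non-degenerate periodic almost non-renormalizable minor. The paper states this characterization outright and observes that the set of removed leaves is Hausdorff-open, whence closedness is immediate; you argue closedness directly via sequences, which is logically equivalent but a bit more roundabout.

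There is, however, a genuine gap. The crucial claim is that no leaf of $\qml^{nr}$ can meet the interior of a gap $\si_2(C(m_0))$ with $m_0$ periodic and almost non-renormalizable. You assert this (``each $\ell_n$ \dots lies `outside' every gap of the form $\si_2(C(m_1))$'') but never justify it; yet this is the substantive content of the theorem. The paper's argument runs as follows: if some $\ell\in\qml^{nr}$ intersected the interior of such a gap, then (being a leaf of $\qml$) it would satisfy $\ell\subset\si_2(C(m_0))$, hence the majors $\pm L$ of $\lam(\ell)$ would lie in $C(m_0)$, and Theorem~\ref{t:com-pb}(1) would give $\lam(m_0)\subsetneqq\lam(\ell)$, contradicting the almost non-renormalizability encoded in $\ell\in\qml^{nr}$. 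You need to supply this step; without it the sentence ``the open set $\Int(\si_2(C(m_0)))$ is \dots disjoint from $\qml^{nr}$'' is precisely what is to be proved.

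A minor point: your opening description of the nondegenerate leaves of $\qml^{nr}$ as edges of $\si_2(C(m_0))$ \emph{with $m_0$ periodic} is incomplete. By definition $\qml^{nr}$ contains the edges of the postcritical set of \emph{every} almost non-renormalizable lamination, and those with non-periodic minor have finite postcritical sets (a single leaf or a finite gap). This does not damage the unlinkedness argument (all such edges are still minors), but it blurs the later dichotomy between ``edge of $\si_2(C(m_0))$'' and ``interior chord'': when $\lam(\ell)$ is itself almost non-renormalizable with non-periodic minor, Lemma~\ref{l:ancestor} returns $m_0=\ell$, and you should simply note $\ell\in\qml^{nr}$ directly rather than trying to realize it as an edge of an infinite gap.
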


\begin{proof}
We only need to prove that $\qml^{nr}$ is closed in the Hausdorff metric.
We claim that $\qml^{nr}$ is obtained from $\qml$ by removing all minors that
are contained in the interiors of the gaps $\si_2(C(m))$ (except for their endpoints), where $m$ are non-degenerate almost non-renormalizable periodic minors.
The theorem will follow from this claim (indeed, the set of removed leaves is open in the Hausdorff metric).

Firstly, we show that a leaf $\ell$ of $\qml^{nr}$ cannot intersect the interior of a gap $G=\si_2(C(m))$ with $m\in\qml$.
Indeed, otherwise the fact that all our leaves are leaves of $\qml$ implies that $\ell\subset G$.
Hence the majors $\pm L$ of $\lam(\ell)$ are contained in $C(m)$.
By Theorem~\ref{t:com-pb}, part (1), we have then $\lam(m)\subsetneqq \lam(\ell)$.
By definition, this contradicts the fact that $\ell$ is a minor of an almost non-renormalizable lamination.

Secondly, suppose that $\tilde m$ is a minor that does not intersect
the interior of any gap $\si_2(C(m))$, where $m$ is a non-degenerate
periodic almost non-renormalizable minor. We may assume that $\tilde m$
is non-degenerate. We claim that $\tilde m\in\qml^{nr}$, i.e. that
$\tilde m$ is an edge of the postcritical set of an almost
non-renormalizable lamination. By way of contradiction, assume
otherwise. Observe that $\tilde m$ is an edge of the postcritical set
of the q-lamination $\hlam(\tilde m)$. By the assumption, it follows
that $\hlam(\tilde m)$ is not almost non-renormalizable. Hence by Lemmas \ref{l:tunq}
and \ref{l:ancestor} there exists a non-empty almost non-renormalizable lamination
$\lam'$ such that $\tilde m\subset \si_2(C(\lam'))$, a contradiction with the assumption
on $\tilde m$.

\end{proof}

Let $m_0$ be a non-degenerate periodic minor. Define the set $\ofl(m_0)$
consisting of all offsprings of $m_0$ and their limits.
The following theorem is a reformulation of Theorem B.

\begin{theorem}
  \label{t:thmC}
  The lamination $\qml^{nr}$ is the union of $\ofl(m_0)$, where $m_0$ runs through all edges of $\ca^c$.
\end{theorem}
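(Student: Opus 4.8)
The plan is to prove the two inclusions $\ofl := \bigcup_{m_0\subset\ca^c}\ofl(m_0)\subset\qml^{nr}$ and $\qml^{nr}\subset\ofl$ separately, using the tools assembled in Sections 2--4. For the first inclusion, let $m_0$ be an edge of $\ca^c$ and $\mu$ an offspring of $m_0$. By Theorem A, $\mu$ is a minor, so $\mu\in\qml$; I must show $\mu$ does not enter the interior of any gap $\si_2(C(m))$ with $m$ a non-degenerate periodic almost non-renormalizable minor (by the description of $\qml^{nr}$ obtained inside the proof of Theorem \ref{t:qml-nr}). Suppose it did; then $\mu\subset\si_2(C(m))$ and, by Theorem \ref{t:com-pb}(1) applied to the majors of $\lam(\mu)$ lying in $C(m)$, we get $\lam(m)\subsetneqq\lam(\mu)$. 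But $\mu$ is an offspring of the edge $m_0$ of $\ca^c$, so by the inductive structure of offsprings (Theorem A gives a chain $\mu = m_{r/r}<\dots<m_{1/r}<m_0$ of consecutive children) and by Theorem \ref{t:com-pb}(3), any lamination properly contained in $\lam(\mu)$ is properly contained in $\lam(m_0)$; since $\lam(m_0)$ has an invariant finite gap or invariant leaf at $m_0$ (definition of $\ca^c$) and is therefore almost non-renormalizable, $\lam(m)\subsetneqq\lam(m_0)$ forces $\lam(m)$ to be empty, a contradiction. Limits of offsprings then lie in $\qml^{nr}$ because, by Theorem \ref{t:qml-nr}, $\qml^{nr}$ is closed. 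This gives $\ofl\subset\qml^{nr}$.

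For the reverse inclusion, take a non-degenerate edge $\tilde m$ of the postcritical set of an almost non-renormalizable lamination $\lam_0$; I must produce $\tilde m$ as a limit of offsprings of some edge $m_0$ of $\ca^c$. The idea is to run downward from $\tilde m$ toward $\ca^c$. Either $\tilde m$ is itself an edge of $\ca^c$ (and then it is its own ``degenerate limit'' — here one should address the boundary case, perhaps by noting $\tilde m$ is then a limit of its own children inside $\ca^c$, or by adjusting the statement so edges of $\ca^c$ are allowed), or $\tilde m<m_0$ for some edge $m_0$ of $\ca^c$ with $\lam(m_0)$ almost non-renormalizable. Since $\lam_0$ is almost non-renormalizable with non-degenerate minor, it is a q-lamination (as noted after Lemma \ref{l:tunq}), and any lamination $\lam\subsetneqq\lam(\tilde m)$ is non-empty only if it properly contains $\lam(m_0)$ — more precisely one arranges, via Lemma \ref{l:ancestor}, that $\lam(m_0)$ is the unique almost non-renormalizable lamination with $\tilde m\subset\si_2(C(m_0))$, so the hypothesis of Theorem \ref{t:R-approx} holds with this $m_0$. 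Theorem \ref{t:R-approx} then yields that $\tilde m$ is a limit of offsprings of $m_0$. Finally, degenerate edges of $\qml^{nr}$ (singletons) are limits of non-degenerate minors already shown to be limits of offsprings, and $\ca^c$'s own edges are handled as above; since $\ofl$ is closed (it is a union over the finitely-behaved family of edges of $\ca^c$ of the closed sets $\ofl(m_0)$, and one checks the union is closed using that distinct $\ofl(m_0)$ accumulate only on $\qml^{nr}$), we conclude $\qml^{nr}\subset\ofl$.

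The main obstacle I expect is the reduction showing that the hypothesis ``$\lam\subsetneqq\lam(m)\ \Rightarrow\ \lam\subsetneqq\lam(m_0)$'' of Theorem \ref{t:R-approx} really holds for the correct choice of edge $m_0$ of $\ca^c$ sitting below a given $\tilde m$. This requires identifying, for each $\tilde m\in\qml^{nr}$, the right $m_0$: one must argue that the \emph{smallest} non-empty lamination below $\lam(\tilde m)$ (Lemma \ref{l:ancestor}) is exactly $\lam(m_0)$ for an edge $m_0$ of $\ca^c$, i.e. that the ``oldest ancestor'' of $\lam(\tilde m)$ has its minor on $\partial\ca^c$ — equivalently that $\lam(m_0)$ has an invariant finite gap or invariant leaf. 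This is where the combinatorics of $\ca^c$ (Step 1 of the Introduction and the definition of $\ca^c$) must be tied to the almost-non-renormalizability machinery, and it is also where the degenerate/endpoint cases (periodic singletons, edges of $\ca^c$ that are themselves in $\qml^{nr}$) need careful bookkeeping. A secondary but routine obstacle is the closedness of $\ofl$: one must rule out offsprings of one edge $m_0$ of $\ca^c$ accumulating on interior points of a baby-$\qml$ gap $\si_2(C(m))$ with $m$ behind a \emph{different} edge, which again follows from Theorem \ref{t:com-pb} combined with Lemma \ref{l:nocross-m0}.
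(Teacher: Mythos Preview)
Your two-inclusion structure is correct and matches the paper's. For $\ofl\subset\qml^{nr}$ your argument is essentially identical to the paper's: pass from an offspring $\ell_i$ in the interior of some $\si_2(C(m_1))$ to $\lam(m_1)\subsetneqq\lam(\ell_i)$ via Theorem~\ref{t:com-pb}(1), then to $\lam(m_1)\subsetneqq\lam(m_0)$ via Theorem~\ref{t:com-pb}(3), contradicting almost non-renormalizability of the edge $m_0$ of $\ca^c$; closedness of $\qml^{nr}$ (Theorem~\ref{t:qml-nr}) then handles limits. For $\qml^{nr}\subset\ofl$ the paper unfolds the proof of Theorem~\ref{t:R-approx} directly (build the pullback lamination $\lam'\subset\lam(m)$ generated by $m_0$, use almost non-renormalizability to force $\lam'=\lam(m)$, then apply Lemma~\ref{l:ofs-approx}), whereas you quote Theorem~\ref{t:R-approx} as a black box; these are the same argument.

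Your ``main obstacle'' is, however, not an obstacle at all, and the detour through Lemma~\ref{l:ancestor} and ``oldest ancestors'' is unnecessary. If $\tilde m\in\qml^{nr}$ is non-degenerate then $\lam(\tilde m)$ is almost non-renormalizable, so the \emph{only} lamination $\lam\subsetneqq\lam(\tilde m)$ is the empty one, which trivially satisfies $\lam\subsetneqq\lam(m_0)$. Thus the hypothesis of Theorem~\ref{t:R-approx} holds automatically for \emph{any} edge $m_0$ of $\ca^c$ with $\tilde m\le m_0$; no matching of $m_0$ to a special ancestor is required, and the combinatorics of $\ca^c$ plays no role beyond the fact that its edges are periodic and almost non-renormalizable. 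Likewise your ``secondary obstacle'' (closedness of the union $\bigcup_{m_0}\ofl(m_0)$) is not needed: one shows directly that each leaf of $\qml^{nr}$ lies in a specific $\ofl(m_0)$, so there is no need to pass through an abstract closure of the union. In short, your plan is right but you have invented difficulties where there are none.
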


\begin{proof}
Consider an almost non-renormalizable minor $m\in\qml^{nr}$.
There is an edge $m_0$ of the combinatorial main cardioid such that $m\le m_0$.
We claim that $m\in\ofl(m_0)$.
Indeed, consider all pullbacks of $m_0$ in $\lam(m)$ and all limit leaves of such pullbacks.
By \cite{bmov13}, this collection $\lam'$ of leaves is a lamination, and by construction $\lam'\subset \lam(m)$.
Since $\lam(m)$ is almost non-renormalizable, $\lam'=\lam$. Hence, pullbacks of $m_0$ in $\lam(m)$ approximate
$m$. By Lemma~\ref{l:ofs-approx}, the minor $m$ is approximated by offsprings of $m_0$.

Now, let $m\in\ofl(m_0)$, where $m_0$ is an edge of $\ca^c$.
Then there is a sequence of minors $\ell_i$ converging to $m$ such that each $\ell_i$ is an offspring of $m_0$.
We claim that $m$ is almost non-renormalizable, i.e., that $m\in\qml^{nr}$.
Assume the contrary: $m$ is contained in a gap $U$ of $\qml^{nr}$ and intersects the interior of $U$.
The only way it can happen is when $U=\si_2(C(m_1))$ is the postcritical gap of an almost
non-renormalizable lamination $\lam(m_1)$.
Then $\ell_i$ must also intersect the interior of $U$ for some $i$, hence $\ell_i$ must be contained in $U$.
By Theorem~\ref{t:com-pb}, part (1), we have $\lam(m_1)\subsetneqq \lam(\ell_i)$.
Since $\ell_i$ is an offspring of $m_0$, it follows by Theorem~\ref{t:com-pb}, part (3),
 that $\lam(m_1)\subsetneqq \lam(m_0)$.
However, this is impossible because $m_0$ itself is almost non-renormalizable,
 and the only lamination strictly contained in $\lam(m_0)$ is the empty lamination.
\end{proof}

\subsection*{Acknowledgments}
The first named author started discussing combinatorial models
for parameter spaces of cubic polynomials with the third named author
while visiting Jacobs University in Bremen in 2009. We are pleased to
thank Jacobs University for their hospitality. The second named author
was partially supported by NSF grant DMS--1807558. The third named
author was partially supported within the framework of the HSE
University Basic Research Program and the Russian Academic Excellence
Project `5-100'.

The authors also would like to thank the referees for valuable remarks
and suggestions.

\bibliographystyle{amsalpha}

\end{document}